\newtheorem{theo}{Theorem}[section]
\newtheorem{prop}[theo]{Proposition}
\newtheorem{claim}[theo]{Claim}
\newtheorem{lemm}[theo]{Lemma}
\newtheorem{coro}[theo]{Corollary}
\newtheorem{rema}[theo]{Remark}
\newtheorem{Defi}[theo]{Definition}
\newtheorem{ex}[theo]{Example}
\newtheorem{question}[theo]{Question}
\title{Unboundedness of zero-cycles on  higher dimensional Fano manifolds}
\author{Claire Voisin\footnote{The author is supported by the ERC Synergy Grant HyperK (Grant agreement No. 854361).}
}
\date{}
\newfont{\gothic}{eufb10}
\begin{document}

\maketitle
\begin{abstract} We show that, unlike del Pezzo surfaces, higher dimensional Fano manifolds do not in general satisfy  boundedness properties for their  ${\rm CH}_0$-group of $0$-cycles. For example, for quartic threefolds having a point of odd degree, there is no ``Coray type" upper bound on the minimal odd degrees of points. Also, the universal  ${\rm CH}_0$-group of Fano hypersurfaces    can  be ``unbounded'' (a notion which is related to infinite dimensionality in the sense of Mumford),  meaning that there is no integer $N$ such that $0$-cycles of degree at least $N$ are effective.\footnote{MSC: 14C25, 14G05}
 \end{abstract}
\section{Introduction}
\subsection{Presentation of the results \label{secpresresults}}
In this paper, a variety defined over a field $K$ is  reduced and geometrically irreducible, that is, irreducible over $\overline{K}$. In particular, the Fano varieties, including del Pezzo surfaces, that we will consider, are smooth projective, geometrically irreducible, with ample anticanonical bundle. For a projective variety $X$ over a field $K$, and for a finite extension $K\subset L$,    the degree of a closed point $x$ of $X$  is defined to be  the degree of the field extension $K\subset L$, where $L$ is the residue field of $x$. This is also the degree of $x$ seen as a $0$-cycle of $X$ over $K$. It is expected by a conjecture of Cassels and Swinnerton-Dyer that any smooth cubic hypersurface with a closed  point of degree coprime to $3$ has a $K$-point.
In \cite{colliot} and \cite{voisindelpezzo}, the following results on points on del Pezzo surfaces of degrees $2$ and $3$ are  proved, improving a classical result of Coray \cite{coray}.
\begin{theo}\label{theocorayvoisin}  (i) (Colliot-Th\'{e}l\`{e}ne  \cite{colliot})  Let $S$ be a del Pezzo surface of degree $2$ over a field $K$ of characteristic $0$. Then if $S$ has a $0$-cycle of degree $1$ (equivalently, if $S$ has a closed point of odd degree), $S$ has a closed point of degree $1$, $3$ or $7$.

(ii) (Voisin \cite{voisindelpezzo})   Let $S$ be a smooth cubic surface defined over a field $K$ of characteristic $0$. If $S$ has a $0$-cycle of degree $1$ (equivalently, if $S$ has a closed point of degree coprime to $3$), $S$ has a closed  point of degree $1$ or  $4$.
\end{theo}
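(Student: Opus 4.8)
The plan is to handle both statements through a single mechanism: show that a closed point of large degree on the del Pezzo surface $S$ can always be replaced by a closed point of strictly smaller degree, using the chord-and-tangent geometry together with the residual-intersection structure of $S$, and only then pin down exactly which small degrees survive. Throughout I would use that having a $0$-cycle of degree $1$ is the same as the index of $S$ (the gcd of the degrees of its closed points) being $1$, i.e.\ the existence of a closed point of odd degree in case (i) and of a closed point of degree prime to $3$ in case (ii): the double cover in case (i) manufactures closed points of degree $2$, while a general $K$-line in $\mathbb{P}^3$ manufactures closed points of degree $3$ in case (ii), so the index divides $2$ (resp.\ $3$) and the two conditions match.

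For part (ii), let $x$ be a closed point of degree $d$ prime to $3$, viewed as an $L$-point with $[L:K]=d$ in general position. At such a point the tangent plane $T_xS$ meets $S$ in a plane cubic curve $C_x$ singular at $x$; since its singular point is $L$-rational, $C_x$ is an $L$-rational curve. The chord-and-tangent operations on $S$---sending a pair of points to the residual third point of their secant line, and a point to the residual point of a tangent line---then let me build new closed points out of $x$ (for instance a degree-$2$ point is immediately reduced to a $K$-point, since the secant of two conjugate points is defined over $K$ and its residual intersection is $K$-rational). A careful bookkeeping of the degrees modulo $3$ brings the degree down to Coray's list $\{1,4,10\}$. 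The genuine difficulty, and the new content of \cite{voisindelpezzo}, is to eliminate the value $10$: one must produce, from a degree-$10$ point, a point of degree $4$ or $1$, which requires a more refined construction---combining a second residual-intersection move with rational equivalence on $S$ and with the Galois action on the $27$ lines---rather than the naive reduction, which gets stuck.

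For part (i), I would realize $S$ as a double cover $\pi\colon S\to\mathbb{P}^2$ branched over a smooth quartic, with Geiser involution $\iota$. The surface carries genus-one fibrations, obtained by pulling back the pencils of lines in $\mathbb{P}^2$ (each fibre being a double cover of $\mathbb{P}^1$ branched at the four points where the line meets the quartic), as well as conic bundle structures. Given a closed point of odd degree, I would use the residual points in these fibrations together with the involution $\iota$ as the replacement moves that lower the degree. Tracking how the parities and residues combine under these moves leaves the list $\{1,3,7\}$, which is the content of \cite{colliot}.

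In both parts the main obstacle is sharpness, not mere boundedness: a crude bound on the minimal degree follows quickly from the reduction moves, but excluding every value above the stated list---and in particular eliminating the ``extra'' degree ($10$ for the cubic surface)---forces a detailed analysis of the Galois orbits of the exceptional curves (the $27$ lines in case (ii), the $56$ lines in case (i)) and of the arithmetic governing how the degrees of residual points combine. This case analysis is where the real work of \cite{colliot} and \cite{voisindelpezzo} is concentrated.
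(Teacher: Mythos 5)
Your proposal is a plan rather than a proof, and the two places where you wave your hands are precisely where all of the content of \cite{colliot} and \cite{voisindelpezzo} lies. (Note first that the paper itself does not reprove Theorem \ref{theocorayvoisin}: it quotes it; the only related argument it contains is the vector-bundle method of Section \ref{seccomplement}.) The chord-and-tangent moves you propose for (ii) cannot reduce a general degree: the secant-line trick handles a degree-$2$ point, but for arbitrary $d$ prime to $3$ the actual mechanism (Coray's, refined by Colliot-Th\'el\`ene) is to pass through the point a curve $C$ of controlled genus cut on $S$ by a hypersurface of low degree (plane, quadric, cubic sections of the cubic surface; curves in $|-kK_S|$, $k\leq 3$, on the degree-$2$ surface) and then apply Riemann--Roch and residuation in the class $C_{|C}$ on that curve. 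The lists $\{1,4,10\}$ and $\{1,3,7\}$ are exactly the genera of those curves, which is why they appear as the sticking points. For instance, in part (i) the elimination of degree $5$ goes as follows: put the degree-$5$ point $D$ on a curve $C\in |-2K_S|$ (possible since ${\rm dim}\,|-2K_S|=6\geq 5$), which has genus $3$; Riemann--Roch applied to $D-(-K_S)_{|C}$ (degree $1$) forces $h^0(C, C_{|C}-D)>0$, producing an effective divisor of degree $3$ defined over $K$, hence a closed point of odd degree at most $3$. Nothing in your list of moves (Geiser involution, genus-one fibrations, conic bundles, ``tracking parities and residues'') produces this step, and the phrase ``careful bookkeeping of the degrees modulo $3$'' is a placeholder for exactly this kind of argument, repeated for every degree that must be excluded.

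You also misidentify the new ingredient of \cite{voisindelpezzo} for part (ii). It is not ``the Galois action on the $27$ lines'' combined with a second residual-intersection move; it is the Serre construction: from the subscheme $Z$ defined by the point one forms a rank-$2$ vector bundle as an extension $0\rightarrow \mathcal{O}_{S}\rightarrow E\rightarrow \mathcal{I}_{Z}(l)\rightarrow 0$, one shows $E$ has enough sections, and one obtains effective $0$-cycles of strictly smaller degree, with the congruence modulo $3$ controlled by $c_2(E)$, as zero loci of sections. This is precisely the machinery the paper recalls in Section \ref{seccomplement} (proof of Proposition \ref{propaprouver}), and it is what allows \cite{voisindelpezzo} to go below Coray's bound and remove the value $10$ --- the one step you yourself flag as ``the genuine difficulty'' and then leave entirely unproved. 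So the proposal both points at the wrong tools for (ii) and, for both parts, omits the reduction arguments that constitute the theorem.
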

 Coray's result    in \cite{coray} was a weaker version of statement (ii), allowing also points  of degree $10$. Concerning statement (i), Koll\'ar and Mella noticed in \cite{kollarmella} that there are del Pezzo surfaces of degree $2$ with a point of degree  $3$ but no $K$-point (see \cite[Remarque 4.3]{colliot} for a complete decription of their argument).

Coming back to the case of cubic hypersurfaces, the following weak version of Cassels-Swinnerton-Dyer conjecture asking  for the existence of higher dimensional Coray-type bounds seems to be open.
\begin{question} \label{questioncoary} Does there exist a number $N(n)$ such that any   smooth cubic hypersurface $X$ of dimension $n$ over a field $K$ of characteristic $0$ having a $0$-cycle of degree $1$ has  a  closed point of degree at most  $ N(n)$ coprime to $3$?
\end{question}
The number $N(n)$ in this question should be of geometric origin, and not depend on the field  $K$, as in Theorem \ref{theocorayvoisin}.

 Colliot-Th\'{e}l\`{e}ne proved more generally in \cite{colliot} that  Coray type bounds as in Theorem \ref{theocorayvoisin} exist for any class of del Pezzo surfaces. It is thus  natural to ask   Question \ref{questioncoary} for higher dimensional  Fano varieties, as suggested in \cite{colliot}.  One  natural extension of Question \ref{questioncoary} is the following
\begin{question} \label{questioncoaryhighdeg} Does there exist a number $N(n)$ such that any   smooth Fano degree $d$ hypersurface $X\subset \mathbb{P}^{n+1}$ over a field $K$ of characteristic $0$ having  a $0$-cycle of degree $1$ has   closed points   of degree $d_i\leq N(n)$ that are globally coprime?
\end{question}
A weaker form of this  question is whether such a bound $N(n)$ exists depending on $X$ defined over $K$ but  working for any $X_L$, for all field extensions $K\subset L$.
Our first main result in this paper is a negative answer to (the weak form of) Question \ref{questioncoaryhighdeg}.
\begin{theo}\label{theovrai} (i)  There  exist  a field $K$ of characteristic $0$ and a quartic hypersurface $X\subset \mathbb{P}^4$ defined over $K$, with the following property: for any odd integer $N\geq7$, there exists an overfield $ L_N\supseteq K$ such that $X_{L_N}$ has  a closed point of degree $N$ and no closed point of odd degree strictly smaller than $N$.

(ii) The same statement holds for quartic or sextic  double solids instead of quartic hypersurfaces of  dimension $3$, for quartic hypersurfaces in $\mathbb{P}^5$ and for double covers of $\mathbb{P}^4$ ramified along a sextic or a octic hypersurface.
\end{theo}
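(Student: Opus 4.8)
The plan is to exploit one structural feature throughout: passing to a field extension can only \emph{lower} the minimal (odd) degree of a closed point, since every closed point of $X_{L}$ persists, with the same or smaller degree, over any larger field. Hence, to realize arbitrarily large minimal odd degrees over extensions of a \emph{fixed} pair $(X,K)$, the base quartic $X$ must itself carry no closed point of odd degree; in fact I would demand the stronger property that the image of $\deg\colon {\rm CH}_0(X_{K})\to \mathbb{Z}$ equals $2\mathbb{Z}$, together with the stability of this ``index $2$'' property under the relevant function-field extensions. Such quartic threefolds are produced by the standard obstruction coming from a nontrivial class in unramified cohomology with $\mathbb{Z}/2$ coefficients (equivalently, the failure of a mod-$2$ decomposition of the diagonal). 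The reason to work over a function field $K=k(B)$ of a parameter space $B$, with $k$ an arithmetic or real base rather than $\mathbb{C}$, is precisely that this obstruction then survives specialization. Note that over the extensions $L$ we build the index will drop to $1$ (an odd-degree-$N$ cycle together with the even-degree ones produces a $0$-cycle of degree $1$), so $X_{L}$ satisfies the hypothesis of Question \ref{questioncoaryhighdeg} while violating any Coray-type bound, which is what is needed.

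For the \emph{upper} half — the existence, for each odd $N$, of an $L$ carrying a degree-$N$ point — I would take $L=k(\Sigma_{N})$, the function field of an irreducible component $\Sigma_{N}$ of the relative symmetric product ${\rm Sym}^{N}(\mathcal X/B)$ dominating $B$, chosen so that its generic length-$N$ cycle is a single geometric Galois orbit. The universal cycle over $\Sigma_{N}$ then restricts to the generic fibre as an \emph{irreducible} closed point of degree exactly $N$. Such components exist for every $N$ because the monodromy acting on the relevant configurations of points of $\mathcal X$ over $B$ is the full symmetric group; already the low-degree multisections obtained by intersecting $\mathcal X$ with a fixed auxiliary curve exhibit this transitivity. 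The only care needed here is to guarantee that the universal point does not split off a smaller closed point, which is exactly the transitivity of the monodromy on the $N$ sheets.

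The \textbf{main obstacle} is the \emph{lower} half: showing that over $L=k(\Sigma_{N})$ there is \emph{no} closed point of odd degree $d<N$. I would argue by spreading out and specialization. A hypothetical such point spreads to a subvariety $Z\subset \mathcal X\times_{B}\Sigma_{N}$ finite of degree $d$ over a dense open of $\Sigma_{N}$, with irreducible generic fibre. Restricting $Z$ over a carefully chosen closed point $s$ of $\Sigma_{N}$ lying above a fibre of $\mathcal X\to B$ for which the mod-$2$ obstruction is intact yields an effective odd-degree-$d$ $0$-cycle there, whereas the only odd contribution available over $s$ comes from the tautological degree-$N$ cycle; matched against the index-$2$ property of that fibre, this forces $d\ge N$. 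The delicate points are (a) arranging that $\Sigma_{N}$ possesses ``good'' specialization points at which the obstruction is genuinely preserved, and (b) excluding every Coray-type reduction (in the spirit of \cite{coray}, using lines or conics meeting the cycle) that might manufacture a smaller odd-degree point over $L$. Point (b) is where the genericity of the degree-$N$ point is essential: a single closed point of degree $N$ is too ``spread out'' for any auxiliary rational curve defined over $L$ to pass through a proper odd-degree sub-orbit of it. I expect (b), the exclusion of all geometric shortcuts uniformly in $N$, to be the technical heart of the proof.

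Finally, for part (ii) I would run the identical machinery. Nothing in the symmetric-product construction or in the spreading-out/specialization argument is sensitive to the dimension of $X$ or to the hypersurface-versus-double-cover presentation; the sole input required is, in each listed family — quartics in $\mathbb P^{5}$, and quartic and sextic double solids — a model over a suitable $K$ whose ${\rm CH}_0$ has index $2$ carrying a specialization-stable mod-$2$ obstruction. These Fano varieties admit such obstructions by the same unramified-cohomology mechanism, so the realization of each odd $N$ and the exclusion of smaller odd degrees go through verbatim, yielding the stated unboundedness and, with it, the negative answer to the weak form of Question \ref{questioncoaryhighdeg}.
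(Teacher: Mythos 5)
Your first half is exactly the paper's construction: take $K=\mathbb{Q}(B)$ with $X$ the generic quartic $X_\eta$ over the parameter space $B$, and for each odd $N$ take $L$ to be the function field of the relative symmetric product $\mathcal{X}^{(N/B)}$, so that the universal cycle restricts to an irreducible closed point of degree $N$ on $X_L$; the fact that ${\rm CH}_0(X_\eta)=\mathbb{Z}h_{X_\eta}$ (so no odd-degree points on the base) is the paper's Lemma \ref{prodiag}. The genuine gap is in your lower half, and it is conceptual, not technical. You propose to exclude odd degrees $d<N$ by specializing a hypothetical point to a closed point $s$ of $\Sigma_N$ ``lying above a fibre for which the mod-$2$ obstruction is intact.'' No such $s$ exists: the residue field $\kappa(s)$ of \emph{any} closed point of $\Sigma_N$ is precisely a field over which the corresponding fiber acquires the specialization of the tautological cycle, an effective $0$-cycle of odd degree $N$. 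So the index-$2$ property you want to invoke is destroyed at every specialization point available to you --- by your own construction. More fundamentally, any parity-based invariant (unramified cohomology with $\mathbb{Z}/2$ coefficients, failure of mod-$2$ decomposition of the diagonal, index $2$) is blind to the difference between the two odd numbers $d$ and $N$: once $X_L$ has a point of odd degree $N$, every mod-$2$ obstruction on $X_L$ has died, and it can no longer rule out a point of odd degree $d<N$. What is needed is an obstruction that grows \emph{linearly} in the degree of the cycle, and this is where your proposal has no mechanism (your point (b), which you correctly identify as the heart of the matter, is left without any tool).

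The paper's actual route supplies exactly that quantitative obstruction. Using Lemma \ref{prodiag} (correspondences of $X_\eta$ are generated, over open sets, by $\Delta_{X_\eta}$ and $X_\eta\times h_{X_\eta}$) and a rational map $\sigma:X_\eta\dashrightarrow X_\eta^{(N)}$ with $\Gamma_N\circ\sigma=N\Delta_{X_\eta}$ (Claim \ref{claim}), a hypothetical point $z_M$ of odd degree $M<N$ yields a relation
$$\alpha\,\Gamma_N\circ\sigma=\beta\, z_M\circ\sigma+\gamma\, X_\eta\times h_{X_\eta}+\Gamma'$$
with $\alpha,\beta$ odd. This relation is specialized to Koll\'ar's characteristic-$2$ degeneration $\overline{X}$ of Theorem \ref{theototaro}, whose ${\rm CH}_0$-universally trivial desingularization carries a nonzero $2$-form $\omega$; the oddness of $\alpha,\beta$ serves only to make them units in characteristic $2$, not as the obstruction itself. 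The contradiction then comes from the rank argument of Theorem \ref{theocrit}, \ref{itsit2}, together with Lemma \ref{pro2form}: the generic tensor rank of $[\Gamma_N]^*\omega$ equals $N\,{\rm trk}_{\rm gen}(\omega)$, whereas an effective correspondence of relative degree $M$ produces a trace form of tensor rank at most $M\,{\rm trk}_{\rm gen}(\omega)$. It is this linear growth in the degree --- Mumford's mechanism, made to work in characteristic $2$ via tensor ranks of inseparable traces --- that separates $M<N$ from $N$; no parity statement can.
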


 \begin{rema}\label{remastrongertheo} {\rm   The hypersurface in Theorem \ref{theovrai} will be the generic  hypersurface $X_{\eta}$, which is defined over the function  field $K$ of $\mathbb{P}(H^0(\mathbb{P}^4_\mathbb{C},\mathcal{O}_{\mathbb{P}^4_\mathbb{C}}(4)))$, as the generic fiber of the universal family over $B$. Here $\mathbb{C}$ could be replaced by any field of characteristic $0$. The field $L_N$  will be obtained by adding universally a point of degree $N$ (see paragraph \ref{secgenpointsNintro}). These fields have big transcendence degrees over $\mathbb{C}$.  }
 \end{rema}

Theorem \ref{theovrai} says in particular that a Coray type bound does not exist for Fano 3-folds or 4-folds  in general. Indeed, the hypersurfaces $X_{L_N}$ in Theorem \ref{theovrai} have index $1$, since they have $0$-cycles of degree $4$ and closed points of odd degree, while the index cannot be computed using points of {\it a priori} bounded degree.   Statement (ii) is more striking in view of Theorem \ref{theocorayvoisin}(i), since quartic double solids are Fano threefolds of index $2$ and their hyperplane sections are del Pezzo surfaces of degree $2$. Their geometry is very close to that of cubic threefolds, for which Question \ref{questioncoary} is still open.

 We will also study in this paper a   different  notion of (un)boundedness  for zero-cycles, which  can be seen as an arithmetic  analogue of Mumford's notion of  finite dimensionality for the ${\rm CH}_0$-group of a complex projective variety (see   \cite{mumford}).
 Consider  a smooth projective  variety $X$ defined over a field $K$ of characteristic $0$. Let us consider the following definition, inspired by Colliot-Th\'{e}l\`{e}ne's results  in \cite{colliot}:
\begin{Defi}\label{questionintro}  We will say that $X$ has bounded ${\rm CH}_0$  if  there exists a number $N$ (possibly depending on $X$ and $K$), such that for any field $L\supseteq K$, any $0$-cycle $z\in {\rm CH}_0(X_L)$  of  degree  ${\rm deg}\,z\geq N$ is effective (that is, rationally equivalent to an effective $0$-cycle).
\end{Defi}
We will say that $X$ has unbounded ${\rm CH}_0$  if an integer $N$ as above does not exist.
We note that  smooth projective (geometrically irreducible) curves $X$ have  bounded ${\rm CH}_0$, as this follows from the Riemann-Roch theorem. As a consequence,  a variety $X$ has bounded  ${\rm CH}_0$  if ${\rm CH}_0(X)$ is  universally representable, in the sense  that there exists a geometrically irreducible curve $C\stackrel{j}{\hookrightarrow} X$ such that for any field $L\supseteq K$, the morphism
$$j_*:{\rm CH}_0(C_L)\rightarrow  {\rm CH}_0(X_L)$$
is surjective.

One should note that for varieties defined over the complex numbers,  the notion of (un)boundedness introduced above is close to, but different from, the notion of (in)finite dimensionality   used  by Mumford in \cite{mumford}. Indeed,  a complex projective variety $X$ has finite dimensional ${\rm CH}_0$-group in Mumford's sense if there exists an integer $N$ such that any $0$-cycle $z\in {\rm CH}_0(X)$  of  degree  ${\rm deg}\,z\geq N$ is effective.  The definitions of ``boundedness'' and ``finite dimensionality'' are thus formally the same, but in Definition  \ref{questionintro},  we work with the universal ${\rm CH}_0$-group, namely the data of all groups ${\rm CH}_0(X_L)$ for $L\supseteq K$, while Mumford works with $K=\mathbb{C}$ and ${\rm CH}_0(X)$. For a complex projective variety, the difference between its  universal ${\rm CH}_0$-group and its ${\rm CH}_0$-group is an important  phenomenon whose study  led to   important recent progress on the study of stable rationality of smooth projective varieties. Indeed, there are now many  Fano varieties (hypersurfaces, complete intersections, cyclic double covers) over $\mathbb{C}$, which are known to have a nontrivial universal ${\rm CH}_0$ (see \cite{voisininvent}, \cite{CTPi}, \cite{totaro} and Schreieder's survey \cite{schreiedericm} for more references), while  their ${\rm CH}_0$-group is trivial (all points are rationally equivalent), because they are  rationally connected.

Inspired by the work of Colliot-Th\'{e}l\`{e}ne  \cite{colliot}, we  proved in \cite{voisindelpezzo} the following
\begin{theo} \label{theorappeldp}   Let $S$ be a smooth del Pezzo surface of degree $d_S\leq 3$, where $d_S:={\rm deg}\,c_1(K_S)^2$,  defined over a field of characteristic $0$. Then any $0$-cycle $z\in {\rm CH}_0(S)$ of degree  $\geq 21$  is effective. In particular, $S$ has   bounded  ${\rm CH}_0$.
\end{theo}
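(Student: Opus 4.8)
The plan is to reduce everything to a finite, explicit geometric situation and then exploit the rich supply of rational curves and effective $0$-cycles on low-degree del Pezzo surfaces. The key point is that a del Pezzo surface $S$ of degree $d_S\leq 3$ over $K$ becomes, after base change to $\overline{K}$, a blow-up of $\mathbb{P}^2$ at $9-d_S$ points, and its geometry is classically understood: the anticanonical class $-K_S$ is ample, $-K_S$ embeds (or maps) $S$ into projective space, and there is an abundance of rational curves (lines, conics, twisted cubics on the anticanonical model) that give rise to effective $0$-cycles of small degree defined over $K$ after taking Galois orbits. I want to produce, for any $0$-cycle $z$ of sufficiently large degree, an effective representative in its rational equivalence class by adding and subtracting movable effective cycles.

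\medskip

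First I would exploit the fact that $-K_S$ is very ample for $d_S=3$ (giving the cubic surface in $\mathbb{P}^3$) and defines a degree-$2$ map to $\mathbb{P}^2$ for $d_S=2$; in either case the linear system $|-K_S|$ is large, so a general member $C\in |-K_S|$ is a smooth genus-$1$ curve defined over some finite extension of $K$, and these curves move in a large family. The crucial structural input is that the restriction map $\mathrm{CH}_0(C_L)\to \mathrm{CH}_0(S_L)$ is surjective for a single well-chosen anticanonical curve $C$ (this is the representability statement alluded to in the text, and it follows because $S$ minus $C$ is covered by rational curves meeting $C$). Granting surjectivity, any class $z\in\mathrm{CH}_0(S)$ of degree $d$ can be written as $j_*(w)$ for some $w\in\mathrm{CH}_0(C)$ of the same degree $d$, where $C$ is a genus-$1$ curve.

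\medskip

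Then I would apply Riemann-Roch on the genus-$1$ curve $C$: a $0$-cycle $w$ of degree $d$ on a curve of genus $1$ is effective as soon as $d\geq 1$, \emph{provided} we are willing to replace $w$ by a linearly equivalent cycle, but over a non-algebraically-closed field one must control the degrees of the closed points involved and the failure of $w$ to be effective is governed by the absence of rational points / by the index of $C$. The bound $21$ should emerge from combining the contributions of the finitely many exceptional or special curves on $S$ (for instance the $27$ lines on the cubic, the bitangent structure for $d_S=2$) with the genus-$1$ Riemann-Roch estimate: once the degree exceeds the amount needed to absorb all the ``corrections'' coming from classes that are not a priori effective, one can always represent $z$ effectively. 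Concretely, I expect one writes $z$ as a sum of a controlled universal part plus a multiple of an ample effective class, and the threshold is the point past which the ample part dominates.

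\medskip

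\emph{The main obstacle} I anticipate is precisely the passage from ``effective up to rational equivalence on $\overline{K}$'' to ``effective over $L$'' uniformly in the field $L$: rational equivalence over $\overline{L}$ does not descend for free, and one must show that the moving lemma / the family of rational curves sweeping out $S$ can be organized so that the effective representative is actually defined over $L$, with its degree bounded independently of $L$. This is where the specific numerology (the number and degrees of the relevant curve classes on a degree $\leq 3$ del Pezzo surface) enters to pin down the constant $21$, and it is the step where the Colliot-Th\'el\`ene-type analysis of Galois-stable effective cycles and the classification of $0$-cycle classes on del Pezzo surfaces must be invoked carefully.
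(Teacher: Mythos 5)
Your argument rests on a structural claim that is false, and the failure is visible from results quoted in this very paper. You assert that for a well-chosen anticanonical curve $C\in |-K_S|$ (a genus-$1$ curve), the pushforward $j_*:{\rm CH}_0(C_L)\rightarrow {\rm CH}_0(S_L)$ is surjective for all $L$. If that were true, then by Riemann--Roch on a genus-$1$ curve (where every $0$-cycle class of degree $\geq 1$ contains an effective divisor over the ground field, with no index obstruction --- this part of your argument is actually fine, indeed too good), every $0$-cycle of degree $\geq 1$ on $S$ would be effective. In particular a del Pezzo surface of degree $2$ carrying a $0$-cycle of degree $1$ would have a $K$-point. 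This contradicts the examples of Koll\'ar and Mella \cite{kollarmella} recalled after Theorem \ref{theocorayvoisin}: there exist degree-$2$ del Pezzo surfaces with a point of degree $3$ (hence, since ${\rm deg}\,h_S=2$, a $0$-cycle of degree $1$) but no $K$-point. So strong representability by a fixed genus-$1$ curve cannot hold for the surfaces covered by the statement ($d_S\leq 3$ includes $d_S=2$). Your proposed justification --- ``$S$ minus $C$ is covered by rational curves meeting $C$'' --- is exactly where the argument breaks: those rational curves exist over $\overline{K}$, but over a non-closed field $L$ there need not be any $L$-rational curve through a given closed point, so one cannot move a point of $S_L$ into $C_L$ within its rational equivalence class. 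The remainder of your sketch (absorbing ``corrections'' from the $27$ lines to produce the constant $21$) is not developed enough to evaluate, but it cannot be repaired while keeping the representability step as its backbone.

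The actual proof (in \cite{voisindelpezzo}, and generalized in Section \ref{seccomplement} of this paper to all del Pezzo surfaces) proceeds by an entirely different mechanism: one first reduces, by Fulton specialization of cycles (\cite[Lemme 2.10]{colliot}, which preserves effectivity), to the case of a \emph{generic} effective cycle $z$ of degree $d$; then, choosing $l$ with $h^0(S,\mathcal{O}_S(l))<d\leq h^0(S,\mathcal{O}_S(l+1))$, Serre duality gives a nonzero extension class producing a rank-$2$ vector bundle $E$ in an extension $0\rightarrow \mathcal{O}_{S_L}\rightarrow E\rightarrow \mathcal{I}_Z(l+1)\rightarrow 0$ with $c_2(E)=z$; a section count based on the Riemann--Roch formula (\ref{eqHRR}) shows that when $d$ is large enough, $E$ has a section vanishing on a subscheme of class $h_S$ plus a residual effective part, so that $z-h_S$ (or $(l+1)^2h_S-z$) is again effective and one concludes by induction on the degree. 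The threshold $21$ comes from the numerics of $h^0(S,\mathcal{O}_S(l))$ for $d_S\leq 3$, not from the configuration of lines. The vector-bundle approach is what circumvents the failure of representability over non-closed fields, which is precisely the obstacle your proposal does not overcome.
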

Colliot-Th\'{e}l\`{e}ne establishes Theorem \ref{theorappeldp} for rational  surfaces over  a field $K$ of  characteristic $0$ having  a $K$-point. Theorem C of  \cite{CTCoray}  states that  surfaces fibered in genus $0$ curves over $\mathbb{P}^1_K$ have bounded ${\rm CH}_0$. In that case, one can take for $N$ half the number of singular fibers.
Applying the same method as in \cite{voisindelpezzo}, we will extend  for completeness in   Section \ref{seccomplement}   Theorem  \ref{theorappeldp} to del Pezzo surfaces of degree $d_S$, with  $4\leq d_S\leq 9$.

\begin{theo} \label{theocompletdp}  There exists an integer $N$ with the following property: for any    smooth del Pezzo surface $S$   defined over a field $K$ of characteristic $0$,  any $0$-cycle $z\in {\rm CH}_0(S)$ of degree $\geq N$  is effective. In particular, $S$ has   bounded  ${\rm CH}_0$.
\end{theo}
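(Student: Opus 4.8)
The plan is to reduce to finitely many cases and then to run a direct extension of the proof of Theorem~\ref{theorappeldp}. A del Pezzo surface has degree $d_S\in\{1,\dots,9\}$, and since Definition~\ref{questionintro} allows the bound to depend on the surface, it suffices to produce, for each fixed value of $d_S$, an integer $N_{d_S}$ that works for all del Pezzo surfaces of that degree over all fields $L\supseteq K$ of characteristic $0$; one then sets $N=\max_{1\le d_S\le 9}N_{d_S}$. For $d_S\le 3$ we may take $N_{d_S}=21$ by Theorem~\ref{theorappeldp}, so the entire content is the range $4\le d_S\le 9$, where the surfaces are ``more rational'' and the relevant geometry is simpler.

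The mechanism I would use is the one underlying \cite{voisindelpezzo}, organised around one clean fact: a smooth projective curve of genus $1$ has bounded ${\rm CH}_0$ with constant $1$. Indeed, by Riemann--Roch over an arbitrary field, a divisor class of degree $\ge 1$ on such a curve $C$ satisfies $h^1=0$ and $h^0=\deg\geq 1$, hence is effective; note that this does \emph{not} require $C$ to have an $L$-point. For $d_S\ge 4$ the system $|-K_S|$ is base-point free, so in characteristic $0$ a general member is a smooth genus-$1$ curve $C\subset S_L$ defined over the (infinite) field $L$, by Bertini; moreover the class $(-K_S)^2\in{\rm CH}_0(S)$ is already effective of degree $d_S$. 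The strategy is therefore to move a given zero-cycle of large degree onto such a curve $C$ and to conclude by genus-$1$ Riemann--Roch.

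Concretely, write $z=z^+-z^-\in{\rm CH}_0(S_L)$ with $z^\pm$ effective and ${\rm deg}\,z=d$. The heart of the argument is a Coray-type \emph{sliding} procedure. Through a closed point of $S_L$ there pass low-degree rational curves on $S$ (lines, conics, members of suitable pencils) as well as genus-$1$ anticanonical curves; since ${\rm CH}_0$ is completely controlled on rational curves (where $\mathrm{Pic}$ has rank one) and on genus-$1$ curves (by the Riemann--Roch fact above), one can move points, within rational equivalence on $S_L$ (which preserves the degree), so as to rewrite $z$ as the sum of an effective cycle and the class $j_*D$ of a divisor $D$ of degree $\ge1$ on a single fixed smooth anticanonical curve $C\subset S_L$. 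Then $D$ is effective on $C$, whence $j_*D$, and therefore $z$, is effective on $S_L$. The threshold $N_{d_S}$ enters precisely to guarantee that $z$ carries enough degree to absorb the negative part $z^-$ during the sliding, i.e.\ that the residual divisor $D$ on $C$ can be arranged to have degree $\ge1$.

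The main obstacle, exactly as for $d_S\le 3$, is to carry out the sliding \emph{uniformly}: one must show in a bounded number of steps, with bounds depending only on $d_S$, that the reduction to a divisor on one and the same curve $C$ can be achieved, and that the curves used at each step are defined over $L$ and not merely over $\overline L$ (recall that $S_L$, and even $C$, may have no $L$-point). This is governed by the combinatorics of the geometric Picard lattice $K_S^{\perp}\subset{\rm Pic}(S_{\overline L})$ --- the configuration of classes of lines and conics, their intersection numbers, and the Galois action determining which configurations are realised over $L$. For $4\le d_S\le 9$ this is a root lattice of rank $9-d_S\le 5$ (of type $D_5$, $A_4$, $A_2\times A_1$, \dots), strictly simpler than the $E_6$ governing cubic surfaces, so the combinatorial input is a simplification of that already treated in \cite{voisindelpezzo}; in the largest-degree cases the relevant combinatorics is nearly trivial (for instance every del Pezzo surface of degree $5$ has an $L$-point and is rational over $L$), and the bound is essentially immediate.
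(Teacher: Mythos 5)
Your overall framework (reduce to the finitely many degrees $d_S\in\{1,\dots,9\}$, quote Theorem \ref{theorappeldp} for $d_S\le 3$, use genus-$1$ Riemann--Roch as the absorbing mechanism) is reasonable, and your Riemann--Roch observation is correct; but the entire mathematical content of the theorem is hidden inside the ``sliding procedure,'' which you assert rather than construct, and which cannot be carried out by the means you describe. Concretely: a closed point $x$ of degree $m$ over $L$ lies on an anticanonical curve defined over $L$ only if $m\le d_S$ (the system $|-K_S|$ has dimension $d_S$), and for large $m$ it lies on no line or conic even geometrically; since a single closed point of $S_L$ can have arbitrarily large degree, moving a cycle of large degree $d$ forces you to use curves in $|-lK_S|$ with $l$ growing with $d$, whose genus $g(l)=1+l(l-1)d_S/2$ grows as well. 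The numerics then fail by exactly one: containing a generic degree-$d$ cycle $z$ in a member $C$ of $|-lK_S|$ over $L$ requires $d\le h^0(S,\mathcal{O}_S(l))-1=l(l+1)d_S/2$, whereas writing $z-(-K_S)|_C$ as an effective divisor class on $C$ via Riemann--Roch requires $d-ld_S\ge g(l)$, i.e.\ $d\ge l(l+1)d_S/2+1$. These two conditions are incompatible. What curve arguments \emph{do} give is the ``reflection'' step: if $d\le h^0(S,\mathcal{O}_S(l))-2$ then $l^2h_S-z$ is effective (this is exactly \cite[Lemma 3.6]{voisindelpezzo}, invoked in the paper), and one checks that compositions of such reflections, constrained by the admissibility inequalities above, never achieve a net decrease of degree. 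So genus-$1$ Riemann--Roch together with curves through the cycle cannot close the induction; an additional mechanism is indispensable.

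That mechanism, both in the paper and in \cite{voisindelpezzo} (which you mischaracterize: its method is not Picard-lattice combinatorics of lines and conics, and the difficulty is not Galois-theoretic), is the Serre construction of rank $2$ vector bundles. Since $d>h^0(S,\mathcal{O}_S(l))$ forces $H^1(S_L,\mathcal{I}_Z(l))\neq 0$, Serre duality gives ${\rm Ext}^1(\mathcal{I}_Z(l+1),\mathcal{O}_{S_L})\neq0$, and a general extension class produces a rank $2$ bundle $E$ with a section vanishing exactly on $Z$, so that $z=c_2(E)$; the point is that $h^0(S_L,E)\ge 2+d_S(l+1)/2$ is strictly larger than the corresponding count for curves through $Z$, and this surplus is precisely what permits finding another section of $E$ vanishing on $Z_0\cup(\hbox{effective residual})$ with $Z_0$ of class $h_S$, whence $z-h_S$ is effective and the induction closes (together with a specialization argument via \cite[Lemme 2.10]{colliot} to reduce to generic cycles, the reflection step to normalize $d\le d_S(l+1)^2/2$, and a separate trick in the boundary cases $d=h^0(S,\mathcal{O}_S(l+1))-1$ or $h^0(S,\mathcal{O}_S(l+1))$). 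Your sketch contains no substitute for this step. Note also that your remark that the high-degree cases are ``nearly trivial'' covers at best $d_S=5$ (where an $L$-point always exists); degree-$4$ del Pezzo surfaces, and quadrics and Severi--Brauer surfaces in degrees $8,9$, can have no $L$-point at all, so the critical cases are untouched by that observation.
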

Theorem \ref{theocompletdp} proves boundedness of   the ${\rm CH}_0$ group of  del Pezzo surfaces with a number $N$ which  is even independent of $S$ and the ground field $K$.
The proof of  Theorem \ref{theocompletdp} will give  an explicit estimate  for the integer $N$.  In the case of smooth projective geometrically integral curves $X$, the number $N$ can be taken to be  the genus of $X$, hence it depends only on the deformation class of $X$.
\begin{rema}{\rm  As Colliot-Th\'{e}l\`{e}ne mentioned to me, for del Pezzo surfaces of degree $\geq 5$, it is possible to  give another proof of the fact  that they have   bounded ${\rm CH}_0$, building on many beautiful  known results on the arithmetic of del Pezzo surfaces of degree $\geq 5$ (see \cite{VA}, \cite{caoyang}, \cite{krashen}, \cite{chernousov}). It is likely that one can even prove that they are  uniformly bounded as above, with a possibly  better bound than the one obtained here.}
\end{rema}

When $X$ is a Fano variety over a field $K$ of characteristic $0$, $X$ is geometrically rationally connected, so we have ${\rm CH}_0(X_{\overline{K}})=\mathbb{Z}$, because any two points $x,\,y\in X(\overline{K})$ belong to  a rational curve in $X$ defined over $\overline{K}$ (see \cite{komimo}). The Chow group
${\rm CH}_0(X)_0$ of $0$-cycles of degree $0$ is a torsion group. More precisely, there exists an integer $M$ (depending on $X$ and the field $K$) such that
for any overfield  $L\supseteq K$, ${\rm CH}_0(X_L)_{0}$ is of $M$-torsion. This statement observed in \cite[Proposition 11]{CTinvent2005} follows either  from a decomposition of the diagonal argument as in \cite{blochsri}, or directly using  the rational connectedness, that provides over a finite extension   $K'\supseteq K $ of degree $d$,  a dominant rational map
$$\phi: Y\times \mathbb{P}^1\dashrightarrow X$$
 mapping $Y\times\{0\}$ to a given point $x\in X(K')$, and such that the restriction of $\phi$ to $Y\times \{\infty\}$ is dominant of degree $N$. Then by a trace argument, for any field extension $L\supseteq K$,  any $0$-cycle of degree $0$ on $X_L$ is annihilated  by $M:=Nd$ in ${\rm CH}_0(X_L)$.

At the opposite, when  $S$ is a  surface over $\mathbb{C}$ with $p_g(S)\not=0$, or more generally any smooth projective variety $X$ over $\mathbb{C}$ with $H^{l,0}(X)\not=0$ for some $l\geq 2$,  Mumford's  celebrated theorem \cite{mumford}, later generalized by Roitman \cite{roitman}, is   the following
\begin{theo}\label{theomumford} (\cite{mumford}, \cite{roitman}) Let $X$ be a smooth projective variety over $\mathbb{C}$ such that  $$H^0(X,\Omega_{X/\mathbb{C}}^l)\not=0\,\,{\rm for \,\,some}\,\, l\geq 2.$$ Then, choosing a  point $x\in X(\mathbb{C})$, for any $N\geq 0$, the natural map
$$X^{(N)}(\mathbb{C})\rightarrow {\rm CH}_0(X)_{0}/{\rm Torsion},$$
$$ z\mapsto z-N x$$
is not surjective.
\end{theo}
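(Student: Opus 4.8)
The plan is to prove the contrapositive: assuming that for some $N$ the map $\sigma_N\colon X^{(N)}(\mathbb{C})\to {\rm CH}_0(X)_0/{\rm Torsion}$, $z\mapsto z-Nx$, is surjective, I would deduce that $H^0(X,\Omega_{X/\mathbb{C}}^l)=0$ for every $l\geq 2$, contradicting the hypothesis. Fix $\omega\in H^0(X,\Omega^l_X)$, put $n:=\dim X$, and set $\Omega_N:=\sum_{i=1}^N p_i^*\omega$ on $X^N$, where the $p_i$ are the projections. This form is invariant under the symmetric group, hence descends to a holomorphic $l$-form on a desingularization of $X^{(N)}$. Restricting $\Omega_N$ along $X\hookrightarrow X^N$, $x\mapsto (x,x_0,\dots,x_0)$ for a fixed base point $x_0$, recovers $\omega$, so it suffices to prove $\Omega_N=0$. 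It is convenient to read $\Omega_N$ as $Z^*\omega$, where $Z\in {\rm CH}^n(X^{(N)}\times X)$ is the universal degree-$N$ cycle; this is what lets rational equivalence act on forms.

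First I would algebraize rational equivalence. Over $\mathbb{C}$ the relation $z\sim z'$ on degree-$N$ effective cycles is a countable union $R=\bigcup_m R_m$ of closed algebraic subsets of $X^{(N)}\times X^{(N)}$, since two such cycles are rationally equivalent exactly when they are linked by cycles on $X\times\mathbb{P}^1$ coming from countably many bounded families of Chow varieties. Because $\mathbb{C}$ is uncountable and $\sigma_N$ is surjective, a Baire-category and dimension argument produces an irreducible component $\Sigma$ of some $R_m$ whose two projections to $X^{(N)}$ are both dominant, the general fibre of ${\rm pr}_1|_\Sigma$ being the general rational equivalence class.

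The crux is to show that holomorphic forms of degree $\geq 2$ are insensitive to rational equivalence, i.e. ${\rm pr}_1^*\Omega_N={\rm pr}_2^*\Omega_N$ on $\Sigma$. Over $\Sigma$ lives a family of cycles on $X$ that are fibrewise rationally trivial, and a rationally trivial correspondence annihilates holomorphic forms; propagating this through the family upgrades it to the equality of the two pullbacks. This is exactly where $l\geq 2$ enters: the equivalences factor through auxiliary curves, on which $\Omega^l$ vanishes for $l\geq 2$, whereas for $l=1$ the statement is false (it detects the Albanese), consistently with abelian varieties not being counterexamples. Restricting to a general fibre $F$ of ${\rm pr}_1$, on which ${\rm pr}_1$ is constant, gives $({\rm pr}_2|_F)^*\Omega_N=0$; hence $\Omega_N$ vanishes along every rational equivalence class.

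The hard part will be converting this ``vanishing along the fibres of $\sigma_N$'' into global vanishing $\Omega_N\equiv 0$: the tangent spaces to the rational equivalence classes do not span $TX^{(N)}$ unless ${\rm CH}_0$ is trivial, so no purely formal argument suffices. This is Mumford's delicate infinitesimal computation of the differential of the cycle map; equivalently, I would invoke the Bloch--Srinivas decomposition of the diagonal with rational coefficients \cite{blochsri} --- legitimate here since holomorphic forms ignore torsion, which is precisely why the statement quotients by torsion --- writing $\Delta_X=\Delta_1+\Delta_2$ with $\Delta_1$ supported on $D\times X$ and $\Delta_2$ on $X\times C$ for a divisor $D$ and a curve $C$. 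Then $\Delta_2^*$ kills $H^{l,0}(X)$ (a curve, $l\geq 2$) and $\Delta_1^*$ factors through restriction to $D$, so a descending induction on $\dim X$ forces $H^{l,0}(X)=0$ and hence $\omega=0$, the desired contradiction. I expect the two genuinely substantial points to be the rational-equivalence invariance above and this concluding induction; Roitman's role \cite{roitman} is to push the argument from surfaces ($l=2$) to all $l\geq 2$ and to handle integral coefficients and torsion.
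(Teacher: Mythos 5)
Your proposal stops exactly where the real work begins, and the two branches you offer both have the same structural gap. The central problem is that you fix a single $N$ and never extract the quantitative content of surjectivity. Everything you actually establish inside $X^{(N)}\times X^{(N)}$ holds unconditionally for \emph{every} smooth projective variety: the rational-equivalence relation always has a component with both projections dominant (the diagonal), and holomorphic forms of any degree always vanish along rational equivalence classes (a cycle rationally equivalent to zero acts as zero on $H^0(\Omega^l)$ for all $l\geq 1$, so this is not where $l\geq 2$ enters). Hence no contradiction can be derived from these facts alone. The force of the hypothesis only appears when you compare \emph{different} symmetric powers: if $\sigma_N$ is surjective, then for $M\gg N$ every $z\in X^{(M)}(\mathbb{C})$ satisfies $z\equiv z'+(M-N)x$ modulo torsion with $z'\in X^{(N)}(\mathbb{C})$, and the countability-of-Chow-varieties/Baire argument converts this into an effective cycle $Z$ of relative degree $N<M$ over $X^{(M)}_0$ together with a decomposition $\Gamma_{M,\eta}=Z_\eta+z_{0,\eta}$ over the generic point --- precisely hypothesis (\ref{eqgammaZx}) of Theorem \ref{theocrit} (with the paper's $(N,N')$ equal to $(M,N)$ here). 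Moreover, your ``hard part'' is the wrong target: neither Mumford's argument nor the paper ever proves global vanishing of $\Omega_N$. The engine is a rank comparison (Proposition \ref{theomuro}): for $l\geq 2$ one has ${\rm rk}_{\rm gen}([\Gamma_M]^*\omega)=M\,{\rm rk}_{\rm gen}(\omega)$ by Lemma \ref{lerank}(ii) (which fails for $l=1$ --- this is where $l\geq2$ genuinely enters), while $[Z]^*\omega$ has generic rank at most $N\,{\rm rk}_{\rm gen}(\omega)$, and $N<M$ forces $\omega=0$. Deferring this step to ``Mumford's delicate infinitesimal computation'' leaves the proof without its decisive step.

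The Bloch--Srinivas alternative transplants the same gap rather than filling it. To write $\Delta_X=\Delta_1+\Delta_2$ in ${\rm CH}(X\times X)_{\mathbb{Q}}$ with $\Delta_2$ supported on $X\times C$ for a curve $C$, you must know that ${\rm CH}_0(X)_{\mathbb{Q}}$ is supported on a curve over $\mathbb{C}$ (serving as universal domain). Surjectivity of $\sigma_N$ does not give this directly: it represents every class by \emph{some} effective cycle of degree $N$, but these cycles move with the class and are not confined to any fixed curve. Passing from Mumford finite-dimensionality to support on a curve requires either Roitman's representability theorem (finite-dimensionality implies the Albanese map is an isomorphism), which you cite only for an unrelated purpose, or again the cross-degree spreading argument above. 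Once the support statement is granted, your Hodge-theoretic conclusion is fine (and needs no induction on dimension: $\Delta_1^*$ kills $H^{l,0}$ for type reasons, $\Delta_2^*$ because $l$-forms with $l\geq 2$ vanish on curves), but as written the decomposition is asserted, not available.
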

In the theorem above,  $X^{(N)}$ denotes the $N$-th symmetric product of $X$.
The conclusion  says that for any integer $N\geq 0$, there exists a cycle
$z_0\in {\rm CH}_0(X_\mathbb{C})_{0}$ such that the $0$-cycle $z_0+N x\in {\rm CH}_0(X_\mathbb{C})$, which is  of degree $N$, is not effective modulo torsion. This  shows that $X$ has has infinite dimensional ${\rm CH}_0$-group in the  Mumford sense, hence { a fortiori}  unbounded  ${\rm CH}_0$  in the sense of Definition \ref{questionintro}.

In  the case where $X$ is a smooth  Fano variety defined over a field $K$ of characteristic $0$, we know that $H^0(X,\Omega_{X/K}^l)=0$ for any $l> 0$, and furthermore we know as explained above that ${\rm CH}_0(X)_0$ is universally a torsion group, so a priori we cannot directly use Mumford's strategy.
Our second main result is the following:
\begin{theo}\label{theomain} Assume $d$ is even, $n\geq 3$ and $d\geq 2\lceil\frac{n+2}{3}\rceil$. Then the very general degree $d$ hypersurface
$X\subset \mathbb{P}^{n+1}$  defined over $\mathbb{C}$ has unbounded  ${\rm CH}_0$ in the sense of Definition \ref{questionintro}.

There exist smooth Fano degree $d$ hypersurfaces $X\subset \mathbb{P}^{n+1}$ defined over $\overline{\mathbb{Q}}$, with unbounded  ${\rm CH}_0$.

The same results hold for  double covers of $\mathbb{P}^n$, $n\geq 3$, ramified along a smooth hypersurface of degree $2d\geq n+1$.
\end{theo}

The proof of Theorem \ref{theomain} will be obtained by combining the arguments of Mumford \cite{mumford} and Totaro \cite{totaro}.  Totaro proved in \cite{totaro} that very general  hypersurfaces as in Theorem \ref{theomain} do not  have universally trivial ${\rm CH}_0$. This is proved by using  Koll\'ar specialization \cite{kollar} in characteristic $2$, and by proving that the existence of a nonzero algebraic form of degree $>0$ on the desingularized central  fiber  $\widetilde{X}_0$ prevents $\widetilde{X}_0$ to have universally trivial ${\rm CH}_0$.  The specialization method then says that the geometric generic fiber does not have universally trivial ${\rm CH}_0$. We combine these arguments with the fact that, more precisely,  the desingularized central  fiber  $\widetilde{X}_0$ has   a nonzero algebraic form of degree $>1$. This in turn will prevent $\widetilde{X}_0$ to have bounded ${\rm CH}_0$ thanks to Theorem \ref{theocrit} and  Corollary \ref{corofautilleprouver} below, inspired by Mumford's theorem.  We will start by reviewing  Mumford's argument in Section \ref{secmumford} and we will prove  the following version of it, which works as well in  nonzero characteristic. For any smooth projective variety $X$ of dimension $n$ over a field $K$, and any positive integer $N$, we denote by $X^{(N)}_0$ the Zariski open set of $X^{(N)}$ parameterizing unordered sets of $N$ distinct points in $X$ and by $\Gamma_N\subset X^{(N)}_0\times X$ the universal subscheme. For any integer $m$, let $L_m=K(X^{(m)}_0)$, so that  $X_{L_{m}}$ is the generic fiber of the projection $X^{(m)}_0\times X\rightarrow X^{(m)}_0$.
\begin{theo}\label{theocrit} Assume there exists an effective cycle $Z\in{\rm CH}^n(X^{(N)}_0\times X)$ with the property that the degree of $Z$ over $X^{(N)}_0$ is   $N'<N$, and a $0$-cycle $z_0\in {\rm CH}_0(X)$  of degree $N-N'$, such   that, over the generic point $\eta$ of $X^{(N)}_0$, one has
\begin{eqnarray}\label{eqgammaZx} \gamma_{N}:=\Gamma_{N,\eta}=Z_\eta+z_{0,L_N}\,\,{\rm in}\,\,{\rm CH}_0(X_{L_{N}}).
\end{eqnarray}
Then  one has $H^0(X,\Omega_{X/K}^l)=0$, assuming we are in one  of the following situations.
\begin{enumerate} \item \label{itsit1} One has $l\geq 2$ and ${\rm char}\,K=0$.
\item \label{itsit2} One has $l= 2$ and ${\rm char}\,K$ is arbitrary.
\item \label{itsit3} One has $l=3$, ${\rm char}\,K=2$ and the dimension of $X$ is $\leq 5$.
\item \label{itsit4} One has $l\geq 2$, ${\rm char}\,K$  is arbitrary and $N\geq\frac{ N'(n+1)}{l}$.
\end{enumerate}
\end{theo}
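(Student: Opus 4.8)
The plan is to convert the cycle-theoretic hypothesis into an equality of holomorphic $l$-forms on $X^{(N)}_0$, and then to read off the degree drop $N'<N$ from a rank computation on those forms. I would first record the invariance lemma that underlies everything: for a family of $0$-cycles $W\subset B\times X$ over a smooth base $B$, the pullback $W^*\omega\in H^0(B,\Omega^l_{B/K})$ of a form $\omega\in H^0(X,\Omega^l_{X/K})$ depends only on the class $W_\eta\in{\rm CH}_0(X_{K(B)})$ of the generic fibre. Granting this and using linearity in $W$, I apply it to the family $\Gamma_N-Z$ over $B=X^{(N)}_0$: by hypothesis $(\Gamma_N-Z)_\eta=z_{0,\eta}$ is the constant cycle $z_0$, whose family factors through a point and so has vanishing pullback for $l\ge 1$. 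Hence I obtain
$$\Gamma_N^*\omega=Z^*\omega\quad\text{in }H^0(X^{(N)}_0,\Omega^l_{X^{(N)}_0/K}).$$

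Next I would compute both sides after pulling back along the finite surjection from the space of ordered $N$-tuples of distinct points. The left-hand side becomes the block-diagonal form $\sum_{j=1}^N p_j^*\omega$, where the $p_j$ are the coordinate projections; at a general point its rank, meaning the dimension of the span of its $(l-1)$-fold contractions, equals $N\rho$, with $\rho$ the generic rank of $\omega$. Here I crucially use $l\ge 2$: a single free contraction separates the $N$ blocks, whereas for $l=1$ the form has rank $1$ and the method collapses, consistently with abelian varieties having representable ${\rm CH}_0$ but nonzero $1$-forms. On the right-hand side, $Z^*\omega=\sum_{k=1}^{N'}\beta_k^*\omega$ where the $\beta_k$ are the $N'$ branches of $Z\to X^{(N)}_0$ (defined étale-locally), so it is the pullback under $(\beta_1,\dots,\beta_{N'})$ of the block-diagonal form on $X^{N'}$, whose rank is at most $N'\rho$. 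Since rank cannot increase under pullback, the equality forces $N\rho\le N'\rho$, hence $N\le N'$, a contradiction; therefore $\omega=0$. In characteristic $0$ all of this is unconditional, so it settles case \ref{itsit1}.

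The main obstacle is positive characteristic, where both the invariance lemma and the branch description of $Z^*\omega$ are delicate: the trace of forms along $Z\to X^{(N)}_0$ can degenerate under inseparability, and the invariance lemma itself rests on residue computations on families of curves in which ramification multiplicities divisible by $p$ must be controlled, so that the Cartier operator intervenes. My plan is to isolate the ranges where these computations survive. For $l=2$ (case \ref{itsit2}) only residues of $1$-forms along the curve direction enter, and the computation goes through in every characteristic; case \ref{itsit3} ($l=3$, ${\rm char}\,K=2$, $\dim X\le 5$) requires a dedicated variant of the same residue analysis. When the sharp branch description is unavailable, I would retreat to the coarser observation that $\Gamma_N^*\omega$ factors through $X^{N'}$, a variety of dimension $nN'$, which only yields the weak estimate $\mathrm{rank}\le nN'$; combined with $\rho\ge l$ this gives $lN\le N\rho\le nN'$, and the numerical hypothesis $N\ge N'(n+1)/l$ of case \ref{itsit4} contradicts it in any characteristic. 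Thus the genuine difficulty is the characteristic-$p$ invariance and trace statement for forms; once that is secured in the relevant range, the rank bookkeeping is routine.
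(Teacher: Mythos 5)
Your characteristic-zero argument (case \ref{itsit1}) is correct and is essentially the paper's own proof: pull back to ordered tuples, use that for $l\geq 2$ the rank of $\sum_j p_j^*\omega$ is $N\,{\rm rk}_{\rm gen}(\omega)$, and bound the rank of $Z^*\omega$ by $N'\,{\rm rk}_{\rm gen}(\omega)$ via the branches of $Z$. But the content of the theorem is concentrated in the positive-characteristic cases, and there your proposal has a genuine gap. For case \ref{itsit4} you claim that $Z^*\omega$ ``factors through $X^{N'}$'' and hence has generic rank $\leq nN'$ ``in any characteristic''. This is false in characteristic $p$: when a component of $Z$ is inseparable over $K(X^{(N)}_0)$, the operation $[Z]^*$ is a trace along a purely inseparable field extension, not a pullback along any morphism (even \'etale-locally the branches $\beta_k$ do not exist, and the natural map to the symmetric product lands in the non-reduced locus, outside $X^{(N')}_0$). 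Moreover the rank of an inseparable trace can genuinely exceed $({\rm degree})\times({\rm rank})$: the paper's Example \ref{example} exhibits, in characteristic $2$, a $3$-form of rank $6$ whose trace under a degree-$2$ extension has rank $13>12$. The correct statement, which rests on the explicit trace formula (\ref{eqtracemap}) and is the paper's Lemma \ref{letraceinsepnew} and Corollary \ref{corotracetout2711}, is ${\rm rk}({\rm Tr}_{L_2/L_1}(\omega))< d\,({\rm rk}(\omega)+1)$; this yields ${\rm rk}_{\rm gen}([Z]^*\omega)<N'(n+1)$, and that ``$+1$'' is precisely why the hypothesis of case \ref{itsit4} reads $N\geq N'(n+1)/l$ rather than $N\geq N'n/l$. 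Your numerical conclusion happens to survive because the hypothesis was tuned to the weaker (true) bound, but that bound is the whole point and your proposed justification of it is invalid.

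Cases \ref{itsit2} and \ref{itsit3} are likewise not proved but only announced, and your diagnosis of the difficulty (residue computations on families of curves, control of the Cartier operator) is not where the obstruction lies: the invariance statement holds in all characteristics via the Gros cycle class in Hodge cohomology, as in Totaro's work, so equality $[\Gamma_N]^*\alpha=[Z]^*\alpha$ is not the issue. What actually fails in characteristic $p$ is the rank bookkeeping, and the paper's fix is to replace the naive rank by the tensor rank: for $l=2$ the trace formula shows ${\rm Tr}_{L_2/L_1}(\omega)=da\wedge\beta$ is decomposable, hence of tensor rank $\leq 1$ (case \ref{itsit2}); for $l=3$ in characteristic $2$ an explicit computation gives ${\rm trk}({\rm Tr}_{L_2/L_1}(\omega))\leq d\,{\rm trk}(\omega)$, an inequality that fails for $l\geq 4$ (Example \ref{exampleell}), which is why case \ref{itsit3} stops at $l=3$. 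Finally, the hypothesis ${\rm dim}\,X\leq 5$ in case \ref{itsit3} is not an artifact of any residue analysis: it is forced by the failure of lower semicontinuity of the tensor rank in the range $3\leq l\leq n-3$ (Remark \ref{reamsurlowersemicontpasbon}), which is needed to compare the tensor rank of $f_i^*\alpha$ at the generic point of $Z_i$ with the generic tensor rank of $\alpha$ on $X$. None of these mechanisms appear in your proposal, so the positive-characteristic part of the theorem remains unproven.
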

\begin{rema}{\rm The case \ref{itsit1} of characteristic $0$   is due to Mumford \cite{mumford} and Roitman \cite{roitman}.

 Case \ref{itsit3} may seem special but it will be needed  for the proof of Theorem \ref{theovrai}.
Case \ref{itsit4} is the one used for the proof of theorem \ref{theomain}.}
\end{rema}
As a corollary of Theorem \ref{theocrit},\ref{itsit4}, we get the following analogue of Mumford's theorem \ref{theomumford}.
\begin{coro}\label{corofautilleprouver}  Let $X$ be a smooth projective variety over any field $K$. If $X$ has bounded ${\rm CH}_0$, then $h^0(X,\Omega_{X/K}^l)=0$ for $l\geq 2$.
\end{coro}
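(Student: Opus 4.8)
The plan is to feed the boundedness hypothesis into Theorem~\ref{theocrit}(\ref{itsit4}), whose conclusion is precisely the desired vanishing. Let $N_0$ be an integer witnessing the boundedness of ${\rm CH}_0(X)$, so that for every field $L\supseteq K$ every $0$-cycle on $X_L$ of degree $\geq N_0$ is effective. Since $X$ is a nonempty projective $K$-variety it carries a closed point $x_0$, of degree $\delta:=[\kappa(x_0):K]\geq 1$; for an integer $m\geq 1$ to be fixed later I set $z_0:=m\,x_0\in{\rm CH}_0(X)$, an effective cycle of degree $D:=m\delta$. I then put $N':=N_0$ and $N:=N_0+D$, so that $N-N'=D=\deg\,z_0$ and $N'<N$ because $D\geq 1$.

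With these choices the universal $0$-cycle $\Gamma_{N,\eta}$ on $X_\eta=X_{K(X^{(N)}_0)}$ is effective of degree $N$, so $\Gamma_{N,\eta}-z_{0,\eta}$ is a $0$-cycle of degree $N-D=N_0$ on $X_\eta$. Applying boundedness over the field $L=K(X^{(N)}_0)$, this cycle is rationally equivalent to an effective $0$-cycle $W_\eta$ on $X_\eta$, necessarily of degree $N'=N_0$. I then spread $W_\eta$ out: taking the Zariski closure in $X^{(N)}_0\times X$ of the closed points appearing in $W_\eta$, with their multiplicities, produces an effective cycle $Z\in{\rm CH}^n(X^{(N)}_0\times X)$ whose degree over $X^{(N)}_0$ equals $N'$ and whose restriction to the generic fibre is $Z_\eta=W_\eta$. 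Hence $\Gamma_{N,\eta}=Z_\eta+z_{0,\eta}$ in ${\rm CH}_0(X_\eta)$, that is, equation~(\ref{eqgammaZx}) holds.

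It remains to satisfy the numerical hypothesis $N\geq N'(n+1)/l$ of case~(\ref{itsit4}). With $N'=N_0$ and $N=N_0+D$ this reads $N_0+D\geq N_0(n+1)/l$, i.e.\ $D\geq N_0(n+1-l)/l$. If $l\geq n+1$ the right-hand side is $\leq 0$ and any $m\geq 1$ works; if $l\leq n$ it suffices to take $m\geq N_0(n+1-l)/(l\delta)$, so that $D=m\delta$ is large enough. For such $m$ all hypotheses of Theorem~\ref{theocrit}(\ref{itsit4}) are met, and the theorem yields $H^0(X,\Omega_{X/K}^l)=0$, equivalently $h^0(X,\Omega_{X/K}^l)=0$.

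I expect no serious obstacle: the substantive content — the passage from a decomposition of the universal cycle to the vanishing of holomorphic forms — is entirely contained in Theorem~\ref{theocrit}, which I am allowed to invoke. The only point requiring care is the simultaneous bookkeeping of the two constraints $N-N'\geq N_0$ (so that boundedness applies) and $N\geq N'(n+1)/l$ (the hypothesis of case~(\ref{itsit4})); these are reconciled precisely by allowing the fixed cycle $z_0$ to have sufficiently large degree, which is possible since $X$ has a closed point. A minor technical step to verify is the standard spreading-out of the effective $0$-cycle $W_\eta$ on the generic fibre to an effective cycle on $X^{(N)}_0\times X$ of the correct relative degree.
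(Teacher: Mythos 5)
Your proof is correct and is essentially the paper's own argument: the paper likewise subtracts a (large) multiple of a fixed positive-degree cycle $z_0$ from $\Gamma_{N,\eta}$ (it takes $N:=N'+ld$), uses boundedness over $K(X^{(N)}_0)$ to produce an effective cycle $Z$ of relative degree $N'$ satisfying (\ref{eqgammaZx}), and then invokes Theorem \ref{theocrit}, case \ref{itsit4}, precisely because $N$ can be made large while $N'$ stays fixed. Your only additions --- the explicit threshold for $m$ making $N\geq N'(n+1)/l$ hold, and the spreading-out of the effective cycle from the generic fibre --- are bookkeeping steps the paper leaves implicit.
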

\begin{proof} Indeed, if $X$  has bounded ${\rm CH}_0$, there exists a number $N'$ such that for any overfield $L\supseteq K$, any $0$-cycle $z\in{\rm CH}_0(X_L)$ of degree at least $N'$ is effective. Choose $z_0\in{\rm CH}_0(X)$ of degree $d>0$, and let $N:=N'+k d$. Let $L:=K(X^{(N)}_0)$. The $0$-cycle $\Gamma_{N,\eta}-kz_{0,\eta}\in {\rm CH}_0(X_L)$ is of degree $N'$, hence effective, thus producing an effective  cycle $$Z'\in{\rm CH}^n(X^{(N)}_0\times X)$$ as above, of degree $N'$ over $X^{(N)}_0$, and satisfying (\ref{eqgammaZx}). Letting $k$ tend to infinity,  $N$ gets  arbitrarily large while  $N'$ is fixed, so  we can apply Theorem \ref{theocrit},\ref{itsit4}.
\end{proof}

\subsection{Overview of the methods and organization of the paper}
\subsubsection{Degree $N$ generic point and boundedness \label{secgenpointsNintro}}
For any projective variety $X$ over a field $K$, one  says that $X$ has universally trivial ${\rm CH}_0$ if $X$ has a $0$-cycle $z$  of degree $1$ and for any overfield $L\supseteq K$, ${\rm CH}_0(X_L)=\mathbb{Z}z_L$. It is  proved in \cite{AuCTPa} that if $X$ is smooth, $X$ has universally trivial ${\rm CH}_0$ if and only if  the generic point $\gamma_1=\delta_{X}\in X(L_1)$ is rationally equivalent to $z_{L_1}$ on $X_{L_1}$. That this condition is necessary is clear since both have degree $1$ on $X_{L_1}$. In the other direction, the rational equivalence $z_{L_1}=\delta_X$ in ${\rm CH}_0(X_{L_1})$ provides a Chow decomposition of the diagonal for $X$, hence for  $X_L$, for any $L\supseteq K$, and this in turn implies that ${\rm CH}_0(X_L)=\mathbb{Z}z_L$, see \cite{AuCTPa}.

For the study of the boundedness of ${\rm CH}_0$, we will consider as in section  \ref{secpresresults} the fields $L_N=K(X^{(N)}_0)$ and the degree $N$  generic point $\gamma_N\in X(L_N) $ defined as the generic fiber of the first projection $\Gamma_N\rightarrow X^{(N)}_0$. Let $h_X\in {\rm CH}_0(X)$ be of degree $d>0$. For  a  hypersurface $X$ of degree $d$, we will take $h_X=c_1(\mathcal{O}_X(1))^n\in {\rm CH}_0(X)$.  One obvious necessary condition for $X$ to have bounded ${\rm CH}_0$ is that there exists an integer $N'$ such that for any $k\geq 0$, the $0$-cycle $\gamma_{N}-k h_{X_{L_{N}}}$ of degree $N'$ is effective on $X_{L_{N}}$, where $N=N'+kd$.  Let us note that this  criterion   is almost  optimal since we have the following
\begin{lemm}\label{lemmanew16juillet}  Assume ${\rm char}\,K=0$ and $X$ is a  smooth projective variety with  $i(X)=1$. Choose  a $0$-cycle  $z\in{\rm CH}_0(X)$ which is of degree $1$.  Then $X$ has bounded ${\rm CH}_0$ if and only if there exists an integer $N$ such that for any $k\geq 0$, $\gamma_{N+k}-k z_{L_{N+k}}$ is effective on $X_{L_{N+k}}$.
\end{lemm}
\begin{proof} The ``only if'' is obvious from the definition. In the other direction, we consider any  field $K'$ containing $K$, and observe that if  $\gamma_{N+k}-k z_{L_{N+k}}$ is effective on $X_{L_{N+k}}$, then $\gamma'_{N+k}-k z_{L'_{N+k}}$ is effective on $X'_{L'_{N+k}}$, where we use the notation $X':=X_{K'}$, $L'_{N+k}=K'({X'}^{(N)}_0)$. This implies by a specialization argument that

 (*) {\it for any effective $0$-cycle $w$ of degree $N+k$ on $X'$, $w-k z_{L'}$ is effective on $X'$.}

 Indeed, by  \cite[Lemme 2.10]{colliot}, Fulton's specialization of cycles \cite[10.3]{fulton} preserves effectivity.

Finally, we write any $0$-cycle on $X'$ as
\begin{eqnarray}\label{eqnarray15juillet}  z'=z'_1-z'_2\,\,{\rm in}\,\,{\rm CH}_0(X')
\end{eqnarray}
where $z'_1$ and $z'_2$ are effective.  By (*), we can rewrite (\ref{eqnarray15juillet}) as
\begin{eqnarray}\label{eqnarray15juilletbis}  z'=z''_1-z''_2+\lambda z_{L'}\,\,{\rm in}\,\,{\rm CH}_0(X')
\end{eqnarray}
for some $\lambda\in \mathbb{Z}$, where $z''_1$ and $z''_2$ are effective of degree $\leq N$.  The degree $1$ cycle $z_{L'}$ has the property that there exists an integer $M$ such that for any effective $0$-cycle $w\in {\rm CH}_0(X')$ of degree $\leq N$, the $0$-cycle
$M z_{L'}-w$ is effective, of degree $\leq M$. This can be seen by applying Riemann-Roch to a smooth curve in $X'$ supporting both $z_{L'}$ and $w$. We can thus rewrite  (\ref{eqnarray15juilletbis})
as
\begin{eqnarray}\label{eqnarray15juilletter}  z'=z''_1+z'''_2+\lambda' z_{L'}\,\,{\rm CH}_0(X')
\end{eqnarray}
where $z'''_2$ is effective of degree $\leq M$, and $z''_1$ is effective of degree $\leq N$. Thus , if ${\rm deg}\,z'\geq M'+N$, we have $\lambda'\geq 0$ and $z'$ is effective. So $X$ has bounded ${\rm CH}_0$.
\end{proof}

 We will use this  criterion  to prove unboundedness of ${\rm CH}_0$ in Theorem \ref{theomain}. Effectivity of $\gamma_{N}-k h_{X_{L_{N}}}$ gives an equality
\begin{eqnarray}\label{eqpourcycleeffectifintro}  \gamma_{N}=z_{N'}+ k h_{X_{L_{N}}}\,\,{\rm in}\,\, {\rm CH}_0(X_{L_{N}}),
\end{eqnarray}
where $z_{N'}$ is an effective $0$-cycle of degree $N'$ on $X_{L_{N}}$. We will translate (\ref{eqpourcycleeffectifintro})
 in geometric form,
which in turn gives, for some dense Zariski open subset $U\subset X^{(N)}_0\times X$,
\begin{eqnarray}\label{eqpourcycleeffectifintroetalee}  \Gamma_{N\mid U\times X}=Z_{N'}+ k U\times h_X\,\,{\rm in}\,\, {\rm CH}^n(U\times X),
\end{eqnarray}
where $Z_{N'}$ is effective of degree $N'$ over $U$.
 In these arguments, we restrict to the open set $X^{(N)}_0$ because it is smooth, so that we can take cycle classes and their action on cohomology or forms, which we will do in  next  paragraph.

\subsubsection{Adapting the original proof of Mumford's theorem \label{secintromu}}
In Section \ref{secmumford}, we will recall the original  Mumford argument for the proof of   Theorem \ref{theomumford} and adapt it to prove Theorem \ref{theocrit}.
 The modern proofs of Mumford's theorem  are as follows:  one proves first that, if ${\rm CH}_0(X)$ is finite dimensional, then there exists a smooth projective curve $j:C\hookrightarrow  X$ such that $j_*: {\rm CH}_0(C)\rightarrow {\rm CH}_0(X)$ is surjective. One then concludes by an argument \`{a} la Bloch-Srinivas: if $(C,j)$ satisfies the above property, then there exist a
positive integer $M$, and two  cycles, $\Gamma_C$ supported on $X\times C$  and  $\Gamma\in {\rm CH}^n(X\times X)$  supported on $D\times  X$ for some proper closed algebraic subset $D\subset X$, such that
\begin{eqnarray}\label{eqblsricurveintro} M\Delta_X=\Gamma_C+\Gamma \,\,{\rm in}\,\,{\rm CH}^n(X\times X),
\end{eqnarray}
where $\Delta_X\subset X\times X$ is the diagonal.
Taking  the  Betti cycle classes in (\ref{eqblsricurveintro}), and letting them act on cohomology by the formula $$\gamma^*(\eta)={\rm pr}_{1*}(\gamma\smile {\rm pr}_2^*\eta),$$ one concludes
that the class of  any $l$-form on $X$ vanishes  for $l\geq2$  on the open set $X\setminus D$, hence is identically $0$. Indeed, the class of the diagonal acts as the identity and the class $[\Gamma_C]$ acts trivially on forms of degree $\geq 2$ since its  action factors through restriction to $C$.

The proof sketched above is elegant but, due to the coefficient  $M$ on the left in (\ref{eqblsricurveintro}), it does not allow to conclude that some torsion invariant of $X$ vanish, and a fortiori it does not generalize in nonzero characteristic. One version of Mumford's theorem  that  works well for any smooth projective variety over any field $K$ (including nonzero  characteristic) is the fact that a decomposition of the diagonal
\begin{eqnarray}\label{eqblsricurveintrosasnM} \Delta_X=\Gamma_C+\Gamma \,\,{\rm in}\,\,{\rm CH}^n(X\times X),
\end{eqnarray}
with  $\Gamma_C$ supported on $X\times C$ and  $\Gamma $ supported on $D\times  X$ for some proper closed algebraic subset $D\subset X$, implies that $H^0(X,\Omega_{X/K}^l)=0$ for $l\geq 2$. This implication is obtained as in \cite{totaro} by using the Gros cycle classes
$\gamma^{n,n}\in H^n(X\times X,\Omega_{X\times X/K}^{n})$  for $\gamma\in {\rm CH}^n(X\times X)$ (see \cite{gros}) and their action on $H^0(X,\Omega_{X/K}^l)$.

However boundedness of ${\rm CH}_0(X)$  does not give a decomposition (\ref{eqblsricurveintrosasnM}) for some  curve $C\subset X$. This latter condition, that implies boundedness of ${\rm CH}_0$ as was mentioned in  \ref{secpresresults}, is more restrictive, as  can be  already seen in the case of del Pezzo surfaces. Instead, in order to prove Theorem \ref{theocrit} and Corollary \ref{corofautilleprouver}, we will adapt the original proof of Mumford's theorem. This  is a remarkable rank argument for traces of algebraic forms, which goes as follows. Assume ${\rm CH}_0(X)$ is finite dimensional, so that there exists an integer $N$ such that any $0$-cycle of degree $\geq N$ on $X$ is effective. Choose $x\in X(\mathbb{C})$. Then for any effective  $0$-cycle $ z\in X^{(N+1)}_0$ of degree $N+1$, $z-x$ is effective. Ignoring for simplicity the fact that an effective $0$-cycle $z'$ rationally equivalent to $z-x$ could have multiplicities, it  follows that there exists a multivalued rational map $X^{(N+1)}_0\dashrightarrow X^{(N)}_0$, that we write as
 \begin{eqnarray}\label{eqdiagram}\xymatrix{
&Y\ar[d]^{\phi}\ar[r]^{\psi}& X^{(N)}_0 &&\\
&X^{(N+1)}_0&&&
}
 \end{eqnarray}
 with the property that $\phi$ is dominant, and for any $y\in Y(\mathbb{C})$
 \begin{eqnarray}\label{eqmultYphipsi}\Gamma_{N+1,*}(\phi(y))=x+ \Gamma_{N,*}(\psi(y))\,\,{\rm in}\,\,{\rm CH}_0(X).
 \end{eqnarray}
 Equation (\ref{eqmultYphipsi}) induces then the equality
 \begin{eqnarray}\label{eqmeqformsrevision}\phi^*([\Gamma_{N+1}]^*\eta=\psi^*( [\Gamma_{N}]^*\eta)\,\,{\rm in}\,\,H^0(Y,\Omega_{Y/\mathbb{C}}^l).
 \end{eqnarray}

of forms, for any differential form $\eta\in H^0(X,\Omega_{X/\mathbb{C}}^l)$, with $l>0$. The vanishing $\eta=0$ for $l\geq 2$ follows by comparing the generic ranks of both sides, when $l\geq2$.

It turns out that the proof above extends with some work to the setting of boundedness over any field. The boundedness assumption is stronger than the finite dimensionality assumption, in the sense that it provides roughly speaking the diagramm (\ref{eqdiagram}), with $Y$ a Zariski open set of $X^{(N+1)}_0$, which prevents difficulties caused by the possible inseparability of the map $\phi$. Another difficulty appears when adapting the argument above, namely in nonzero characteristic, an effective $0$-cycle of degree $N$  on $X$ may not be given by an element of $X^{(N)}$. This means that the rational map $\psi $ of (\ref{eqdiagram}) has to be replaced by the data of any effective  cycle $Z\in{\rm CH}^n( Y\times X)$ of degree $N$ over $Y$. The irreducible closed algebraic subsets $Z_i$ of codimension $n$ appearing in $Z$  could be inseparable over $Y$, which makes  the rank estimate for the forms
$[Z]^*\eta$ more subtle than in nonzero characteristic (see Lemmas \ref{lepourranksepar} and \ref{letraceinsep}).

\subsubsection{Applying the specialization method \label{secintrospecmeth}}
In Section \ref{sectheomain}, we will give the proof of Theorems \ref{theomain} and \ref{theovrai}.  The proofs build up   on   Totaro's work  \cite{totaro}.
 Totaro  proves that, for $n\geq3$, a very general complex hypersurface of degree $d\geq \frac{2(n+2)}{3}$ in  $\mathbb{P}^{n+1}$ does not have universally trivial ${\rm CH}_0$.
 Totaro's paper built on  the Colliot-Th\'{e}l\`{e}ne-Pirutka specialization method in  \cite{CTPi},  itself inspired by  the degeneration method in \cite{voisininvent}. This specialization method was used in all three cases  to disprove the ${\rm CH}_0$-universal triviality (hence the stable rationality) of the very general (or geometric generic) fiber. However Totaro used a completely different criterion (namely the existence of a nonzero algebraic form of positive degree) to prove that the special fiber in his specialization does not have  universally trivial ${\rm CH}_0$, while \cite{voisininvent} and \cite{CTPi} used a specialization with nontrivial Brauer class on the desingularized special fiber $\widetilde{X}_0$. Totaro uses the same  specialization as Koll\'ar in  \cite{kollar}, for  which  $\widetilde{X}_0$   has nonzero algebraic forms of degree $n-1$.
 The Koll\'ar central fiber $X_0$ of the Koll\'ar-Mori specialization of a hypersurface of degree $d$ in $\mathbb{P}^{n+1}$ (say with $d$ even to make the singularities even simpler)  is mildly singular, so that the specialization method applies to it (see \cite{totaro}). Totaro deduces the non-triviality of the universal ${\rm CH}_0$-group of the desingularized central fiber, or equivalently the non-existence of a Chow decomposition of the diagonal, from the existence of a nonzero algebraic form of positive degree, by taking cycle classes as discussed in paragraph \ref{secintromu}, and considering their actions on algebraic forms. What we do is  closely related, except that we apply these considerations not only  to the generic point $\delta_X=\gamma_1$, but also to the higher degree generic points $\gamma_N$. Proposition \ref{newpropdu16juillet} states  the version of the specialization method that  we  apply in order to establish unboundedness of ${\rm CH}_0$ for the geometric generic fiber.
Thanks to our Theorem \ref{theocrit}, the existence of algebraic  forms of degree $\geq 2$ on the desingularized central fiber $\widetilde{X}_0$  implies   the non-effectivity of $0$-cycles of the form $\gamma_{N}-k h_{\widetilde{X_0},L_{N}}$  on  the desingularized special fiber $\widetilde{X}_0$, for $k$ large and $N'=N-kd$ fixed. The same is  then true for the geometric generic fiber.

\subsubsection{Extra construction needed   for the proof of Theorem \ref{theovrai}\label{secintroextratrick}}
The quartic threefold of Theorem \ref{theovrai} is the generic quartic threefold $X_\eta$ defined over the field $K=k(B)$, $B=\mathbb{P}(H^0(\mathbb{P}^4, \mathcal{O}_{\mathbb{P}^4}(4)))$ over any field $k$ of characteristic $0$ (for example $k=\mathbb{C}$). The fields $L_N$ for odd $N$ and the points of degree $N$ are the same as before (see paragraph \ref{secgenpointsNintro}), namely
$L_N=K(X_\eta^{(N)})$ and $\gamma_N$ is the generic point of degree $N$.

We want to show that $X_{\eta,L_N}$ has no closed point $z_{N'}$  of  odd degree $N'<N$.  In order to apply the machinery used to prove Theorem \ref{theomain}, namely specialization and ranks of pull-backs of  algebraic forms, we need to do two things.

The first step is to find a rational equivalence equivalence relation similar to (\ref{eqpourcycleeffectifintro}) between $\gamma_N$ and $z_{N'}$ in ${\rm CH}_0(X_{\eta,L_N})$. We were in fact  not able to achieve this because we could not compute ${\rm CH}_0(X_{\eta,L_N})$. In Claim \ref{claim}, we construct instead (for $N\geq 7$)   a birational copy $\sigma(X_\eta)$ of $X_\eta$ in $X^{(N)}_{\eta,0}$. Our construction is done in such a way that for any point $x$ of $X_\eta$ where $\sigma$ is defined, (eg the generic point of $X_\eta$ defined over $L_1$,)  the corresponding point $\sigma(x)$ of $ X^{(N)}_{\eta,0}$ provides a $0$-cycle of degree $N$ on $X_\eta$ which is rationally equivalent to $Nx$. It is important that, when $X_\eta$ specializes to $\overline{X}$ under the Koll\'ar specialization, this map specializes to
$\overline{\sigma} : \overline{X}\dashrightarrow \overline{X}^{(N)}_0$.

 Spreading $z_{N'}$ to a cycle $\mathcal{Z}_{N'}\in{\rm CH}^3(X^{(N)}_{\eta,0}\times X_\eta)$, and using the fact that  $X^{(N)}_{\eta,0}$ is smooth, we get the composed correspondences
$\Gamma_N\circ \sigma$ and  $\mathcal{Z}_{N'}\circ \sigma\in {\rm CH}^3(U\times X_\eta)$, defined at least  over the non-empty Zariski open set $U$ of $X_\eta$ where $\sigma$ is a morphism with  value in $X^{(N)}_{\eta,0}$.

The group  ${\rm CH}^3(U\times X_\eta)$ can be computed  (see Lemma \ref{prodiag}) and we conclude that the two  correspondences
$\Gamma_N\circ \sigma$ and  $\mathcal{Z}_{N'}\circ \sigma\in {\rm CH}^3(U\times X_\eta)$  satisfy (modulo $2$) a rational equivalence relation
$$\Gamma_N\circ \sigma=\mathcal{Z}_{N'}\circ \sigma+\lambda U\times h_X\,\,{\rm in}\,\, {\rm CH}^3(U\times  X_\eta)\otimes\mathbb{Z}/2\mathbb{Z}$$
for some integer $\lambda$.

We now specialize everything  to the desingularization $\widetilde{\overline{X}}$ of the   Koll\'ar specialization $\overline{X}$ of $X_\eta$ and get the similar  relation
\begin{eqnarray}\label{eqnewdu24}\Gamma_N\circ \overline{\sigma}={Z}_{N'}\circ \overline{\sigma}+\lambda U\times h_{\widetilde{\overline{X}}}\,\,{\rm in}\,\, {\rm CH}^3(\overline{U}\times \widetilde{\overline{X}})\otimes\mathbb{Z}/2\mathbb{Z},\end{eqnarray}
where ${Z}_{N'}$ is the Fulton specialization of $\mathcal{Z}_{N'}$, hence is effective.
As we are in characteristic $2$, this  gives, for  any $2$-form $\omega$ on $\widetilde{\overline{X}}$,
$$ \overline{\sigma}^*([\Gamma_N]^*\omega)=\overline{\sigma}^*([{Z}_{N'}]^*\omega)\,\,{\rm in}\,\,H^0(\widetilde{\overline{X}},\Omega_{\widetilde{\overline{X}}/K_0}^2).$$
Finally, we  prove that the restriction map $\overline{\sigma}^*$ from  algebraic $2$-forms on $\widetilde{\overline{X}}^{(N)}_0$ to algebraic $2$-forms on $\widetilde{\overline{X}}$  is injective. Combining these arguments, we conclude that we get a relation
of $2$-forms on $\widetilde{\overline{X}}^{(N)}_0$:
$$ \Gamma_N^*\eta=Z_{N'}^*\eta\,\,{\rm in}\,\, H^0(\widetilde{\overline{X}}^{(N)}_0,\Omega_{\widetilde{\overline{X}}^{(N)}_0/K_0}^2).$$
As $N'<N$ and $Z_{N'}$ is effective, we conclude to a contradiction by comparing the generic  ranks on both sides.

The restriction to dimensions $3$ and $4$ in Theorem \ref{theovrai} comes from the last argument. When computing the generic rank of these forms, we meet some difficulties in nonzero characteristic, due to the fact that the computation of the  generic rank of the trace of an algebraic form does not work in the same way in nonzero characteristic. However it works for degree $2$ forms in any characteristic, which we use in dimensjon $3$. For   degree $3$ forms in characteristic $2$, we have to  use the tensor rank instead  (see Lemma  \ref{letraceinsep}), and this solves the dimension 4 case.

\vspace{0.5cm}

{\bf Thanks.} {\it   I thank Laurent Manivel and Joseph Landsberg for answering my questions on the rank of alternate forms and  Jean-Louis Colliot-Th\'{e}l\`{e}ne for  useful  explanations concerning del Pezzo surfaces. I also thank Burt Totaro for his    careful reading and suggestions, and the referees for their work which greatly  improved the exposition.}

\section{\label{secmumford} A review and extension of Mumford's results on infinite dimensionality  of ${\rm CH}_0$}
Our goal in this section is to prove Theorem  \ref{theocrit}.
We first  review  Mumford's theorem, adapted  to work over any field.   Let $X$ be  a smooth projective variety of dimension $n$ over a field $K$,  and let $X^{(N)}_0\subset X^{(N)}$ be the  open subset parameterizing $N$ distinct unordered points, which is smooth. The universal subscheme
$$\Gamma_N\subset X^{(N)}_0\times X$$
 is flat of degree $N$ and proper  over $X^{(N)}_0$. We denote by
$${\rm pr}_{X^{(N)}_0}: X^{(N)}_0\times X\rightarrow X^{(N)}_0 ,\,\,{\rm pr}_{X}: X^{(N)}_0\times X\rightarrow X$$
the two projections.

As $X$ is smooth projective of dimension $n$, ${\rm pr}_{X^{(N)}_0}$ is proper and  there is for any $l$ a push-forward map
$${\rm pr}_{X^{(N)}_0*}: H^{n}(X^{(N)}_0\times X,\Omega_{X^{(N)}_0\times X/K}^{n+l})\rightarrow H^0(X^{(N)}_0,\Omega_{X^{(N)}_0/K}^l)$$
and thus,  for any class $\gamma\in H^n(X^{(N)}_0\times X,\Omega_{X^{(N)}_0\times X/K}^n)$, an action
\begin{eqnarray}\label{eqvanGammaUdolb} \gamma^*: H^0(X,\Omega_{X/K}^{l})\rightarrow H^0(X^{(N)}_0,\Omega_{X^{(N)}_0/K}^l),\\
\nonumber
\gamma^*(\alpha)={\rm pr}_{X^{(N)}_{0}*}(\gamma\wedge {\rm pr}_X^*\alpha).
\end{eqnarray}

In particular, for $\gamma=[\Gamma_N]$, where $[\Gamma_N]\in H^n(X^{(N)}_0\times X,\Omega_{X^{(N)}_0\times X/K}^n)$ is the Gros  cycle class used in \cite{totaro}, and which depends only on the class of $\Gamma_N$ in ${\rm CH}^n(X^{(N)}_0\times X )$, we get for any $\alpha\in H^0(X,\Omega_{X/K}^{l})$ an $l$-form
$$[\Gamma_N]^*\alpha\in H^0(X^{(N)}_0,\Omega_{X^{(N)}_0/K}^l).$$

\begin{rema}\label{rematrace} {\rm  The construction above gives more generally, for any $l$, for any smooth variety $Y$ over $K$ and any cycle $\Gamma\in{\rm CH}^n (Y\times X)$, a morphism
$$[\Gamma]^*: H^0(X,\Omega_{X/K}^l)\rightarrow  H^0(Y,\Omega_{Y/K}^l).$$
In practice, we will compute $[\Gamma]^*\omega$ as a trace, at least at the generic point of $Y$.}
\end{rema}
 Let  $q:X^N\rightarrow X^{(N)}$ be the natural quotient map, and let $X^N_0:=q^{-1}(X^{(N)}_0)\subset X^N$.
We observe next  that
$$ (q,{\rm Id}_X)^*\Gamma_N=\sum_i\Gamma_i\,\,{\rm in}\,\,{\rm CH}^n(X^N_0\times X),$$
where $\Gamma_i\subset X^N_0\times X$ is the graph of the $i$-th projection ${\rm pr}_i: X^N_0\rightarrow X$.
It follows that
\begin{eqnarray}\label{eqpareilpourcrchet} (q,{\rm Id}_X)^*[\Gamma_N]=\sum_i[\Gamma_i] \,\,{\rm in}\,\,H^n(X^N_0\times X,\Omega_{X^N_0\times X/K}^n).
\end{eqnarray}

The contravariant functoriality of the constructions above
gives
\begin{eqnarray}\label{eqtireU}
q^*([\Gamma_N]^*\alpha)=\sum_i[\Gamma_i]^*\alpha=\sum_i{\rm pr}_i^*\alpha \,\,{\rm in}\,\,H^0(X^{N}_0,\Omega_{X^{N}_0/K}^l)
\end{eqnarray}
for any
$\alpha\in H^0(X,\Omega_{X/K}^l)$.
We  now make the following assumption

\vspace{0.5cm}

 (*) {\it there exist  an effective cycle
$$Z\in {\rm CH}^n(X^{(N)}_0\times X)$$ of relative degree $N'<N$ over $X^{(N)}_0$
and a
Zariski open set $U\subset  X^{(N)}_0$
such that

(a)  the corresponding morphism
$$\phi_Z:U\rightarrow X^{(N')}$$
is well-defined and takes value in $X^{(N')}_0$ and,

(b)  for some $z_0\in {\rm CH}_0(X)$ of degree $N-N'$,}
\begin{eqnarray}\label{eqvanGammaU} \Gamma_{N\mid U\times X}= U\times z_0+\Gamma_{N'}\circ \phi_{Z}\,\,{\rm in}\,\,{\rm CH}^n(U\times X).\end{eqnarray}

\vspace{0.5cm}

The  cycle class
$$[\Gamma_N]\in H^n(X^{(N)}_0\times X,\Omega_{X^{(N)}_0\times X/K}^n)$$
then satisfies
\begin{eqnarray}\label{eqvanGammaUdolbclass} [\Gamma_{N}]_{\mid U\times X}=[U\times z_0]+(\phi_Z,{\rm Id}_X)^*[\Gamma_{N'}]\,\,{\rm in}\,\,H^n(U\times X,\Omega_{U\times X}^n).\end{eqnarray}
From (\ref{eqvanGammaUdolbclass}), we deduce that, under the assumption (\ref{eqvanGammaU})
\begin{eqnarray}\label{eqvanGammaUdolbenfinvan} ([\Gamma_{N}]^*\alpha)_{\mid U}=\phi_Z^*([\Gamma_{N'}]^*\alpha)\,\,{\rm in}\,\,H^0(U,\Omega_{U/K}^l)\end{eqnarray}
for any $l>0$ and $\alpha\in  H^0(X,\Omega_{X/K}^{l})$, because $[U\times z_0]^*$ acts as zero on $l$-forms for $l>0$.
Let $\widehat{U}\subset X^N$ be the inverse image of $U$ under the quotient map $q:X^N_0\rightarrow X^{(N)}_0$ and let $\hat{\phi}_Z:=\phi_Z\circ q:\widehat{U}\rightarrow X^{(N')}_0$.

Equation (\ref{eqvanGammaUdolbenfinvan}) gives after pull-back to $\widehat{U}$, using (\ref{eqtireU})

\begin{eqnarray}\label{eqvcontrad} \sum_{i=1}^N{\rm pr}_i^*\alpha=\hat{\phi}_Z^*([\Gamma_{N'}]^*\alpha) \,\,{\rm in}\,\,H^0(\widehat{U},\Omega_{\widehat{U}/K}^l)\end{eqnarray}
for any $l>0$ and $\alpha\in  H^0(X,\Omega_{X/K}^{l})$.

We now prove

\begin{prop}\label{theomuro} (A version of Mumford-Roitman theorem \cite{mumford}, \cite{roitman}.) If $X$ has dimension $n$ and  satisfies assumption (*), then
$H^0(X,\Omega_{X/K}^l)=0$ for  $l\geq 2$.
\end{prop}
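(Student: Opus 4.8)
The plan is to argue by contradiction, taking as starting point the identity (\ref{eqvcontrad}) already established under assumption (*), and to extract a purely numerical contradiction with the inequality $N'<N$ by comparing the pointwise ranks of the two sides as alternating forms. For a nonzero alternating $l$-form $\omega$ on the tangent space at a point of a smooth variety, let me write $\mathrm{rk}(\omega)$ for its rank, i.e. the codimension of its kernel $\{v:\iota_v\omega=0\}$. I will use four elementary facts: $\mathrm{rk}(\omega)\geq l$ whenever $\omega\neq 0$; subadditivity $\mathrm{rk}(\omega_1+\omega_2)\leq \mathrm{rk}(\omega_1)+\mathrm{rk}(\omega_2)$; monotonicity $\mathrm{rk}(L^*\omega)\leq \mathrm{rk}(\omega)$ under a linear pullback $L$; and the lower semicontinuity of the pointwise rank of an algebraic form, so that its value at the generic point is the maximum $r$ of its rank over all (scheme-theoretic) points. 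Suppose then that $\alpha\in H^0(X,\Omega^l_{X/K})$ is nonzero with $l\geq 2$, and set $r:=\mathrm{rk}(\alpha)\geq l\geq 2$.

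First I would compute the rank of the left-hand side $\sum_{i=1}^N \mathrm{pr}_i^*\alpha$ at the generic point $\eta$ of $\widehat U$ (equivalently of $X^N$). Since $T_\eta X^N=\bigoplus_i T_{x_i}X$ and $\mathrm{pr}_i^*\alpha$ only involves covectors of the $i$-th factor, for $v=(v_1,\dots,v_N)$ one has $\iota_v\big(\sum_i\mathrm{pr}_i^*\alpha\big)=\sum_i \iota_{v_i}\mathrm{pr}_i^*\alpha$, a sum of $(l-1)$-forms lying in the pairwise independent summands $\wedge^{l-1}T^*_{x_i}X$ of $\wedge^{l-1}T^*_\eta X^N$. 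This is precisely where $l\geq 2$ enters: because $l-1\geq 1$ these summands are genuinely distinct, so the sum vanishes iff each term does, whence $\ker=\bigoplus_i\ker(\alpha_{x_i})$ and $\mathrm{rk}\big(\sum_i\mathrm{pr}_i^*\alpha\big)=\sum_i \mathrm{rk}(\alpha_{x_i})$. As each $\mathrm{pr}_i\colon X^N_0\to X$ is dominant, $x_i(\eta)$ is the generic point of $X$ and $\mathrm{rk}(\alpha_{x_i})=r$, so the left-hand side has rank $Nr$ at $\eta$. For $l=1$ this additivity collapses, since the $\iota_{v_i}\mathrm{pr}_i^*\alpha$ are then scalars lying in the same summand $\wedge^0=K$ and a sum of $1$-forms has rank $1$; this is exactly why the conclusion is limited to $l\geq 2$.

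Next I would bound the rank of the right-hand side $\hat\phi_Z^*\beta$, where $\beta:=[\Gamma_{N'}]^*\alpha\in H^0(X^{(N')}_0,\Omega^l)$. By the analogue of (\ref{eqtireU}) for $N'$ one has $q'^*\beta=\sum_{j=1}^{N'}\mathrm{pr}_j^*\alpha$ on $X^{N'}_0$, and since the quotient $q'\colon X^{N'}_0\to X^{(N')}_0$ by the free $S_{N'}$-action on distinct points is étale, the additivity above gives, at a point $w=\{y_1,\dots,y_{N'}\}$, the equality $\mathrm{rk}(\beta_w)=\sum_j \mathrm{rk}(\alpha_{y_j})\leq N'r$, using that $r$ is the maximal pointwise rank. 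Applying the pullback inequality to the linear map $d\hat\phi_Z|_\eta$ then yields $\mathrm{rk}\big((\hat\phi_Z^*\beta)_\eta\big)\leq \mathrm{rk}(\beta_{\hat\phi_Z(\eta)})\leq N'r$. The essential gain here is that one bounds by $N'r$ and not merely by the coarse estimate $\dim X^{(N')}_0=N'n$: because both sides are multiples of the same generic rank $r$ of $\alpha$, no numerical hypothesis relating $N,N',n,l$ is required.

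Finally, since the two sides of (\ref{eqvcontrad}) agree as forms their ranks at $\eta$ coincide, giving $Nr\leq N'r$; as $r\geq 2>0$ this forces $N\leq N'$, contradicting the inequality $N'<N$ built into (*). Hence no nonzero $\alpha$ exists and $H^0(X,\Omega^l_{X/K})=0$ for $l\geq 2$. I expect the only delicate points to be the two rank facts anchoring the comparison—the additivity $\mathrm{rk}(\sum_i\mathrm{pr}_i^*\alpha)=\sum_i\mathrm{rk}(\alpha_{x_i})$, which genuinely needs $l\geq 2$, and the sharp bound $\mathrm{rk}(\beta)\leq N'r$ obtained through the étale cover $q'$—rather than the (then automatic) conclusion. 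I would also verify that $q$ and $q'$ are étale on the distinct-point loci in every characteristic, so that all rank computations transport freely between $X^N_0$, $X^{(N)}_0$ and $X^{(N')}_0$; this is what keeps the argument characteristic-free \emph{under} the standing assumption (*), the genuine inseparability obstructions arising only later, when (*) itself must be deduced from (\ref{eqgammaZx}).
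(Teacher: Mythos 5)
Your proof is correct and is essentially the paper's own argument: both compare ranks at the generic point on the two sides of the established identity, using additivity of the rank under direct-sum decompositions (which is exactly where $l\geq 2$ enters), monotonicity under linear pullback, and lower semicontinuity of the rank, to obtain $Nr\leq N'r$ and hence a contradiction with $N'<N$. The only cosmetic difference is that you perform the comparison on $\widehat{U}\subset X^N_0$ using the \'etale quotient maps, whereas the paper works with (\ref{eqvanGammaUdolbenfinvan}) directly on $U\subset X^{(N)}_0$; this changes nothing of substance.
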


\begin{proof}[Proof of Proposition  \ref{theomuro}]  We will use  the naive notion ${\rm rk}(\omega)$ of rank  of a $l$-form $\omega\in \bigwedge^l V^*$, for any finite dimensional  vector space $V$. Namely ${\rm rk}(\omega)$ is defined as the rank of
the linear map given by interior product
\begin{eqnarray}\label{eqrankomegatrad} \lrcorner \omega: V\rightarrow \bigwedge^{l-1}V^*,\end{eqnarray}
$$v\mapsto v \lrcorner \omega.$$

We have  the following rather obvious
\begin{lemm}\label{lerank} (i) Let $\phi: V\rightarrow W$ be a linear map, and let $\omega\in\bigwedge^l W^*$. Then  ${\rm rk}(\phi^*\omega)\leq {\rm rk}(\omega)$.

(ii) Let $V$ be a vector space of the form $V=V_1\bigoplus \ldots\bigoplus V_N$ and let $\omega_i\in\bigwedge^lV_i^*$. Then, if $l\geq 2$, the  rank of \begin{eqnarray}\label{eqpouromega} \omega=\sum_i{\rm pr}_i^*\omega_i\end{eqnarray} is given by
\begin{eqnarray}\label{eqrangnaif} {\rm rk}(\omega)=\sum_i{\rm rk}(\omega_i).\end{eqnarray}

(iii) The  rank of a $l$-form $\omega\in \bigwedge^lV^* $  is lower-semicontinuous on $ \bigwedge^lV^* $ equipped with the Zariski topology.

(iv) The rank of a form is subadditive, that is : ${\rm rk}(\sum_i\omega_i)\leq \sum_i{\rm rk} (\omega_i)$.
\end{lemm}
\begin{proof} (i) follows immediately from the definition of the  rank, and from the fact that
$$\lrcorner\phi^*\omega=\phi^*\circ \lrcorner\omega\circ \phi: V\rightarrow \bigwedge^{l-1} V^*.$$
For (ii),  a form $\omega$ as in (\ref{eqpouromega}) has  rank ${\rm rk}(\omega)\leq \sum_i{\rm rk}(\omega_i)$ because the map
$\lrcorner\omega$ can be written as \begin{eqnarray} \label{eqnarraynew11nov} \lrcorner\omega=\sum_i {\rm pr}_i^*\circ \lrcorner\omega_i\circ {\rm pr}_i.  \end{eqnarray}

To see that ${\rm rk}(\omega)$ is  actually  $ \sum_i{\rm rk}(\omega_i)$ when $l\geq 2$, let $V_\omega$ be the kernel of $\lrcorner\omega$.
 We observe from (\ref{eqnarraynew11nov}) that a vector $$v=v_1+\ldots+ v_N\in V=V_1\bigoplus \ldots\bigoplus V_N$$ belongs to $V_\omega$ if and only if \begin{eqnarray}\label{eqvanpourle}v_i\lrcorner\omega_i=0\,\,{\rm  for\,\, all}\,\, i. \end{eqnarray}
Note that this is at this point that we use the condition $l\geq 2$, which implies that each $v_i\lrcorner\omega_i$ has degree $l-1>0$, so that the spaces  ${\rm pr}_i^*(\bigwedge^{l-1}V_i^*)$ are in direct sum in $\bigwedge^{l-1}V^*$.
Equality (\ref{eqrangnaif}) immediately  follows.

(iii) The lower semi-continuity of ${\rm rk}(\omega)$ immediately follows from the lower semi-continuity of the rank of a linear map or matrix, since it is defined as the rank of linear map $\lrcorner\omega$ of (\ref{eqrankomegatrad}).

(iv) follows from (i) and (ii) when $l\geq 2$, and is trivial when $l=1$.
\end{proof}
\begin{rema}{\rm Lemma \ref{lerank}(ii) is completely wrong if $l=1$, as any $1$-form has rank $1$. This observation is at the core of Mumford's theorem.}\end{rema}

We now conclude the proof of Proposition  \ref{theomuro}.  We argue by contradiction and assume that there exists a nonzero $\alpha\in H^0(X,\Omega_{X/K}^l)$, with $l\geq 2$.
We observe that thanks to Lemma \ref{lerank}(iii), a nonzero $l$-form $\alpha$ on $X$ has a generic  rank ${\rm rk}_{\rm gen}(\alpha)$, and that at any point $x\in X$, the  rank of $\alpha$ is $\leq {\rm rk}_{\rm gen}(\alpha)$.
Formula (\ref{eqtireU}) for $N'$ and Lemma \ref{lerank},(i) and (ii), imply that the  rank of $\phi_Z^*([\Gamma_{N'}]^*\alpha)$  at the generic point of $X^{(N)}_0$ is not greater than $ N'{\rm rk}_{\rm gen}(\alpha)$. Similarly, as $l\geq2$, formula (\ref{eqtireU}) and  Lemma \ref{lerank}(ii) give that
\begin{eqnarray}\label{eqgenrankgamman2611}{\rm rk}_{\rm gen}([\Gamma_{N}]^*\alpha)=  N{\rm rk}_{\rm gen}(\alpha),
\end{eqnarray}
which contradicts (\ref{eqvanGammaUdolbenfinvan}) since $N'<N$. (Note that  we used above  the fact that $q$ is \'etale over $X^{(N)}_0$ so that $q^*$ preserves the rank,  and similarly for $N'$.)
\end{proof}

For the proof of Theorem \ref{theocrit} in the cases \ref{itsit2} and \ref{itsit3} of nonzero characteristic, we will not use however the naive rank of a form as above, but its tensor rank.
Let $V$ be a vector space and let
$\omega\in \bigwedge ^lV^*$. Following \cite{landsberg}, we define the tensor rank ${\rm trk}(\omega)$ of any nonzero $\omega\in\bigwedge^lV$ as the minimal integer $r$ such that
$$\omega=\omega_1+\ldots +\omega_r,$$
where each $\omega_i$ is decomposable.

We have  the following variant of Lemma \ref{lerank}.
\begin{lemm}\label{lerankprime} (i) Let $\phi: V\rightarrow W$ be a linear map, and let $\omega\in\bigwedge^l W^*$. Then ${\rm trk}(\phi^*\omega)\leq {\rm trk}(\omega)$.

(ii) Let $V$ be a vector space of the form $V=V_1\bigoplus \ldots\bigoplus V_N$ and let $\omega_i\in\bigwedge^lV_i^*$. Assume that ${\rm dim}\,V_i=l$ or ${\rm dim}\,V_i=l+1$. Then if $l\geq 2$, the tensor rank of \begin{eqnarray}\label{eqpouromegaprime} \omega=\sum_i{\rm pr}_i^*\omega_i\end{eqnarray} is equal to $N$, assuming $\omega_i\not=0$ for all $i$.

(iii)  In the situation of (ii), assume that either $l=2$ or ${\rm dim}\,V_i=l+2$, so that $\omega_i$ has an even rank $2r_i$ (hence tensor rank $r_i$). Then if $l\geq 2$  and  $ \omega=\sum_i{\rm pr}_i^*\omega_i$,   ${\rm trk}(\omega)=\sum_ir_i$.

(iv) The tensor rank of a $l$-form $\alpha\in \bigwedge^lV^* $ on  a vector space $V$ of dimension $n$ is lower-semicontinuous on $ \bigwedge^lV^* $ equipped with the Zariski topology, assuming $l\leq2$ or $l\geq n-2$.

(v) The tensor rank of a $l$-form is subadditive : ${\rm trk}(\sum_i\omega_i)\leq \sum_i{\rm trk}(\omega_i)$.
\end{lemm}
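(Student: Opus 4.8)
The plan is to dispatch (i) and (v) directly from the definition, deduce the upper bounds in (ii) and (iii) from them, and then concentrate all the effort on the matching lower bounds and on the semicontinuity statement (iv). Statement (i) is immediate: if $\omega=\sum_{j=1}^r\beta_{j,1}\wedge\cdots\wedge\beta_{j,l}$ with $\beta_{j,k}\in W^*$, then $\phi^*\omega=\sum_{j=1}^r\phi^*\beta_{j,1}\wedge\cdots\wedge\phi^*\beta_{j,l}$ is a sum of $r$ decomposable forms, so ${\rm trk}(\phi^*\omega)\leq{\rm trk}(\omega)$. Statement (v) is the even more obvious fact that concatenating minimal decompositions of the $\omega_i$ gives a decomposition of $\sum_i\omega_i$. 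Combining (i) and (v) with the tensor rank of a single summand yields the upper bounds: in case (ii) each nonzero $\omega_i$ is an $l$-form on a space of dimension $l$ or $l+1$, hence a top form or a codimension-one form, and in both cases it is decomposable (every $(m-1)$-form on an $m$-dimensional space equals $v\lrcorner{\rm vol}$ for some vector $v$, hence is decomposable), so ${\rm trk}(\omega_i)=1$ and ${\rm trk}(\omega)\leq N$; in case (iii) the hypothesis gives ${\rm trk}(\omega_i)=r_i$, so ${\rm trk}(\omega)\leq\sum_i r_i$.

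The lower bounds rest on comparing tensor rank with the naive rank of Lemma \ref{lerank}. The elementary input is that a nonzero decomposable $l$-form $\alpha_1\wedge\cdots\wedge\alpha_l$ has naive rank exactly $l$, since its contraction map has kernel $\bigcap_k\ker\alpha_k$ of codimension $l$; together with the subadditivity of the naive rank (Lemma \ref{lerank}(iv)) this yields, for every $l$-form $\eta$, the inequality ${\rm rk}(\eta)\leq l\cdot{\rm trk}(\eta)$. In case (ii), Lemma \ref{lerank}(ii) gives ${\rm rk}(\omega)=\sum_i{\rm rk}(\omega_i)=Nl$ since each $\omega_i$ is nonzero and decomposable, whence ${\rm trk}(\omega)\geq{\rm rk}(\omega)/l=N$, matching the upper bound. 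In case (iii) with $l=2$, the same computation gives ${\rm rk}(\omega)=\sum_i 2r_i$ and therefore ${\rm trk}(\omega)\geq{\rm rk}(\omega)/2=\sum_i r_i$, again matching.

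The main obstacle is the remaining case of (iii), namely $l\geq 3$ with $\dim V_i=l+2$, where the bound ${\rm trk}\geq{\rm rk}/l$ is too weak. Here I would exploit the codimension-two duality $\bigwedge^l V_i^*\cong\bigwedge^2 V_i\otimes\det V_i^*$, which identifies $\omega_i$ with a bivector $\beta_i$ of rank $2r_i$ and tensor rank $r_i$ (the parenthetical assertion in the statement). To extract the lower bound I would contract the global form by a generic $(l-2)$-vector $\xi$: since contraction by a vector of $V_i$ kills $\omega_j$ for $j\neq i$, one has $\xi\lrcorner\omega=\sum_i\xi_i\lrcorner\omega_i$, where $\xi_i$ is the $V_i$-component of $\xi$ and the summands lie in the mutually transverse blocks $\bigwedge^2 V_i^*\subset\bigwedge^2 V^*$; hence, by Lemma \ref{lerank}(ii) applied to $2$-forms, ${\rm rk}(\xi\lrcorner\omega)=\sum_i{\rm rk}(\xi_i\lrcorner\omega_i)$, which equals $\sum_i 2r_i$ for generic $\xi$ (the projections $(\mathrm{pr}_i)_*\xi$ can be made simultaneously generic). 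A decomposable summand $\theta_j=\alpha_1\wedge\cdots\wedge\alpha_l$ of $\omega$ contracts to the $2$-form $\xi\lrcorner\theta_j$ supported on ${\rm span}(\alpha_1,\dots,\alpha_l)$. When $l=3$ — the case needed for Theorem \ref{theocrit}(\ref{itsit3}), in dimension $\leq 5$ — this is a $2$-form on a $3$-dimensional space, of rank $\leq 2$, so subadditivity forces $2\sum_i r_i={\rm rk}(\xi\lrcorner\omega)\leq 2\,{\rm trk}(\omega)$ and gives the sharp bound ${\rm trk}(\omega)\geq\sum_i r_i$. For larger $l$ this contraction loses a factor $\lfloor l/2\rfloor$, so I expect the sharp additivity to require instead the intrinsic bivector description together with the substitution method for tensor rank of \cite{landsberg}; isolating an invariant that is simultaneously additive over the transverse factors $V_i$ and tightly controlled on decomposables is the delicate point.

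Finally, for the semicontinuity (iv): when $l\leq 2$ the tensor rank is determined by the naive rank (it is $0$ or $1$ for $l\leq 1$, and equals ${\rm rk}(\omega)/2$ for $l=2$), so (iv) follows from the lower-semicontinuity of the naive rank, Lemma \ref{lerank}(iii). When $l\geq n-2$ I would pass to the dual form under the linear isomorphism $\bigwedge^l V^*\cong\bigwedge^{n-l}V\otimes\det V^*$, which is a homeomorphism for the Zariski topology and sends decomposables bijectively to decomposables, hence preserves tensor rank; since $n-l\leq 2$, the claim reduces to the previous case. The restriction to $l\leq 2$ or $l\geq n-2$ is essential, since tensor rank fails to be semicontinuous in the intermediate range; this is precisely the point recorded in the discussion preceding the statement, and I would cite \cite{landsberg} for it.
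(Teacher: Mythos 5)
Most of your proposal is correct, and where it is complete it is in places more careful than the paper's own proof. Parts (i) and (v) coincide with the paper's one-line arguments; your lower bound in (ii), packaged as ${\rm rk}(\eta)\leq l\,{\rm trk}(\eta)$ (a nonzero decomposable $l$-form has naive rank exactly $l$) combined with block-additivity of the naive rank from Lemma \ref{lerank}(ii), is exactly the paper's common-kernel codimension count in different clothing; and your (iv) matches the paper's, both reducing to the range where tensor rank is a function of the naive rank (you via the duality $\bigwedge^lV^*\cong\bigwedge^{n-l}V\otimes\det V^*$, the paper by invoking the same identification blockwise). The genuinely different part is (iii) for $l=3$, $\dim V_i=5$: your contraction by a generic vector $\xi$, which yields ${\rm rk}(\xi\lrcorner\omega)=\sum_i2r_i$ on one side while each decomposable summand of $\omega$ contracts to a $2$-form of rank at most $2$ on the other, is a complete and correct argument. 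The paper merely says that (iii) ``is proved similarly as (ii)'' using $\bigwedge^lV_i^*\cong\bigwedge^2V_i$; note that the kernel count of (ii) cannot by itself give the sharp bound there, since for $\dim V_i=l+2$ and $r_i\geq2$ the contraction $\lrcorner\omega_i$ is injective on $V_i$, so that count only yields ${\rm trk}(\omega)\geq\sum_i(1+2/l)$-type estimates, which undershoot $\sum_ir_i$ as soon as some $r_i\geq 2$. So for the case the paper actually needs (Theorem \ref{theocrit}, case \ref{itsit3}, where $l=3$ and $\dim X\leq5$), your argument supplies details the paper leaves implicit.

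The gap is the one you flag yourself: (iii) for $l\geq4$, $\dim V_i=l+2$. Your contraction by a generic $(l-2)$-vector fails on both sides of the comparison: the contracted block $\xi_i\lrcorner\omega_i$ lies in $\bigwedge^2$ of the $4$-dimensional annihilator of the support of $\xi_i$, so its rank caps at $4<2r_i$ when $r_i\geq3$, and a contracted decomposable now has rank up to $2\lfloor l/2\rfloor$ rather than $2$; you offer only a speculation (substitution method) in place of a proof. As a proof of the lemma as stated, this is a real gap. Two mitigating facts: this case is never used in the paper --- the hypotheses of Theorem \ref{theocrit}, case \ref{itsit3} are restricted to $l=3$ and dimension at most $5$ precisely so that only $\dim V_i\in\{l,l+1,l+2\}$ with $l\leq3$ occurs --- and the paper's own one-line proof of (iii) does not establish the $l\geq4$ case either, for the reason given above. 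So you have proved everything the paper relies on, but to claim the lemma in the stated generality you would need either to supply the missing additivity argument for $l\geq4$ or to restrict (iii) to $l\leq3$.
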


\begin{proof} (i) follows immediately from the definition of the tensor rank since the pull-back of a decomposable form is decomposable.

For (ii), as any $l$-form on a vector space of dimension $l$ or $l+1$ is decomposable, we get by definition of the tensor rank that a form $\omega$ as in (\ref{eqpouromegaprime}) has tensor rank $\leq N$.  Suppose it has tensor rank $<N$. Then it can be written as
$$\omega=\omega'_1+\ldots+\omega'_{N'}$$
with $N'<N$ and $\omega'_i$ decomposable. It follows that there is a vector subspace $V'_\omega\subset V$ of dimension $\geq {\rm dim}\,V-N'l$ contained in the kernel $V_\omega$ of $\omega$ consisting of vectors $v\in V$ such that $v\lrcorner\omega=0$, namely the space  $V'_\omega$  annihilated by all $\xi_{sr}\in V^*$, where
$\omega'_{s}=\xi_{s1}\wedge\ldots\wedge \xi_{sl}$.
However, for $\omega$ given by (\ref{eqpouromegaprime}), a vector $$v=v_1+\ldots+ v_N\in V=V_1\bigoplus \ldots\bigoplus V_N$$ belongs to the kernel  $V_\omega$ of $\omega$ if and only if \begin{eqnarray}\label{eqvanpourle1}v_i\lrcorner\omega_i=0\,\,{\rm  for\,\, all}\,\, i. \end{eqnarray}
(Note that, as before, this is at this point that we use the condition $l\geq 2$, which implies that each $v_i\lrcorner\omega_i$ has degree $l-1>0$.)
As all $\omega_i$ are nonzero, condition (\ref{eqvanpourle1}) defines a space of codimension $Nl$, which provides  a contradiction.

(iii) is proved similarly as (ii), except that the computation of the tensor rank is different. We use here the fact that when ${\rm dim}\,V_i=l+2$, $\bigwedge^lV_i^*\cong \bigwedge^2V_i$ so the tensor rank of $\omega_i$ is computed as half the rank of $\omega_i$ seen as a $2$-form on $V_i^*$.

(iv) When $n=l$ or $n=l+1$, the tensor rank of any $\omega\in \bigwedge^lV^*$ is $1$ if $\omega\not=0$ and $0$ otherwise. When $n=l+2$, it was already explained  that the tensor rank is half the rank of the corresponding element of $\bigwedge^2V$, and similarly when $l\leq 2$, hence in all these cases, the rank  is lower-semicontinuous.

(v) trivially follows from the definition of the tensor rank.
\end{proof}
\begin{rema}{\rm As before, Lemma \ref{lerankprime}(ii) is  wrong if $l=1$, as any nonzero $1$-form has  tensor rank $1$.}\end{rema}

\begin{rema}\label{reamsurlowersemicontpasbon} {\rm Lemma \ref{lerankprime}(iv) is wrong if $3\leq l\leq n-3$. For example, being of tensor rank $2$ means belonging to the proper secant variety of the Grassmannian ${\rm Grass}(l,n)$ in its Pl\"{u}cker embedding (that is, the points on a bisecant line of ${\rm Grass}(l,n)$). But the Zariski closure of the proper secant variety of the Grassmannian contains its tangential variety, where we get  elements of the form
$$\alpha_1\wedge\ldots\wedge \alpha_l+\sum_i\alpha_1\wedge\ldots\wedge\hat{\alpha_i}\wedge\ldots\wedge \alpha_l\wedge \beta_i,$$
which are of tensor rank $l$ if $n$ is large enough and $\beta_i$ are chosen in a general way.}\end{rema}

\begin{proof}[Proof of theorem \ref{theocrit}, \ref{itsit1} to \ref{itsit3}]  Proposition  \ref{theomuro}  proves almost directly Theorem \ref{theocrit} in characteristic $0$, that is \ref{itsit1}. Indeed,  under the assumptions of Theorem \ref{theocrit}, we have an  effective cycle $Z$ of relative degree $N'$  satisfying (\ref{eqgammaZx}) over the generic point of $X^{(N)}_0$, hence over a dense Zariski open set. In characteristic $0$, such a cycle $Z$ automatically provides a rational map $\phi$ to the symmetric product $X^{(N')}$. To avoid generically the singularities of $X^{(N')}$, one should in fact decompose the effective cycle $Z$ as a sum $\sum_i Z'_i$, where each $Z'_i$ is irreducible and has degree $N'_i$ over $X^{(N)}_0$, and $\sum_i N'_i=N'$. Then generically  the rational map $\phi_{i}:X^{(N)}\dashrightarrow X^{(N'_i)}$ induced by $Z'_i$ takes value in $X^{(N'_i)}_0$ and  (\ref{eqgammaZx}) can be rewritten as

  \begin{eqnarray}\label{eqfactphiZ} Z_{\mid U\times X}=\sum_i \Gamma_{N'_i}\circ \phi_i\,\,{\rm in}\,\,{\rm CH}^n(U\times X)\end{eqnarray} for a dense Zariski open set $U$ of $X^{(N)}_0$. The relation  (\ref{eqfactphiZ}) then implies the following variant
   \begin{eqnarray}\label{eqpourformetireeii} ([\Gamma_{N}]^*\alpha)_{\mid U}=\sum_i \phi_i^*([\Gamma_{N'_i}]^*\alpha)\,\,{\rm in}\,\,H^0(U,\Omega_{U/K}^l)\end{eqnarray}
   of (\ref{eqvanGammaUdolbenfinvan}),
for any $l>0$ and $\alpha\in  H^0(X,\Omega_{X/K}^{l})$.

We then get a contradiction if $N>N'$ and $l\geq 2$  by the same  rank argument as in the proof of Proposition \ref{theomuro}, using the subaddivity and lower continuity of the rank (Lemma \ref{lerank} (iii), (iv)). Indeed, if $r$ is the generic rank of $\alpha$, then the generic rank of the left hand side in (\ref{eqpourformetireeii}) is $Nr$, while the generic rank of the right hand side is $\leq N'r$.

In nonzero characteristic, nonseparability phenomena prevent, even at the generic point, to construct $\phi$ from $Z$  and to write the factorization (\ref{eqfactphiZ}), but still we can deduce from
(\ref{eqgammaZx}) the equality of forms
\begin{eqnarray}\label{eqtracedeformes} [\Gamma_N]^*\alpha=[Z]^*\alpha\,\,{\rm in}\,\, H^0(X^{(N)}_0,\Omega_{X^{(N)}_0/K}^l),
\end{eqnarray}
for any $\alpha\in H^0(X,\Omega_{X/K}^l)$, with $l>0$.
We now restrict equality (\ref{eqtracedeformes}) to the generic point $\eta={\rm Spec}\,K(X_0^{(N)})$ of $X^{(N)}_0$, which allows to compute the pull-backs of forms in (\ref{eqtracedeformes}) as traces, and argue by comparing the generic tensor ranks of both sides in (\ref{eqtracedeformes}).

  The effective cycle $Z\in{\rm CH}_0(X_\eta)$ is a sum of points $Z_i\in X(L_i)$ where each $L_i=K(Z_i)$ is a finite extension of $K(X^{(N)})$. Such an extension decomposes as
$$K(X^{(N)}_0)\subset L_{i,{\rm sep}}\subset L_i$$
where the first extension is separable and the second one is purely inseparable.

We now use the following
\begin{lemm}\label{letraceinsep} Let $ L_1\hookrightarrow L_2$ be a degree $d$ purely inseparable extension of fields containing a given field  $K$ of characteristic $p$.   Let  $\omega \in \bigwedge^l\Omega_{L_2/K}$ be a $l$-form.

(i) If $l=2$, and $0\not={\rm char}\,K$ is arbitrary,  ${\rm Tr}_{L_2/L_1}(\omega)$ has tensor rank $\leq 1$.

(ii) If $l=3$ and ${\rm char}\,K=2$,
\begin{eqnarray}\label{eqdesiredineqtrace}{\rm trk}({\rm Tr}_{L_2/L_1}(\omega))\leq d\,{\rm trk}(\omega).
\end{eqnarray}
\end{lemm}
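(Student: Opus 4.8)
The plan is to reduce everything to a single purely inseparable step of degree $p$ and to prove there a completely explicit formula for the trace on forms. First I would factor the extension $L_1\hookrightarrow L_2$ of degree $d=p^e$ as a tower $L_1=M_0\subset M_1\subset\cdots\subset M_e=L_2$ in which each step $M_{j+1}=M_j(t_{j+1})$, with $t_{j+1}^p\in M_j$, has degree $p$ (such a tower exists for any purely inseparable extension). Since the trace on differential forms is multiplicative in towers and tensor rank is subadditive (Lemma \ref{lerankprime}(v)), both statements reduce to one-step estimates: for $l=2$ and arbitrary $p$, that ${\rm Tr}_{M'/M}(\omega)$ is decomposable for \emph{every} $\omega$; and for $l=3$, ${\rm char}\,K=2$, that ${\rm trk}({\rm Tr}_{M'/M}(\omega))\le p\,{\rm trk}(\omega)$. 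Composing the $e$ steps then gives tensor rank $\le1$ in case (i), and $\le p^{e}{\rm trk}(\omega)=d\,{\rm trk}(\omega)$ in case (ii).

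For one step $M\subset M'=M(t)$, $t^p=a\in M$, I would use the structure $\Omega_{M'/K}=W\oplus M'\,dt$, where $W$ is spanned over $M'$ by the images of a basis of $\Omega_{M/K}$; thus every $l$-form splits as $\omega=\omega^{(0)}+\omega^{(1)}\wedge dt$ with $\omega^{(0)}\in\bigwedge^{l}W$ and $\omega^{(1)}\in\bigwedge^{l-1}W$. Then I invoke three formal properties of the form trace: the projection formula ${\rm Tr}(f^{*}\beta\wedge\gamma)=\beta\wedge{\rm Tr}(\gamma)$, compatibility with $d$, and the fact that on functions it is the field trace. For a degree-$p$ purely inseparable extension the field trace vanishes identically (already ${\rm Tr}(1)=p=0$), so ${\rm Tr}(\omega^{(0)})=0$; and from $t^{i}dt=\frac{1}{i+1}d(t^{i+1})$ together with $d$-compatibility one gets ${\rm Tr}(t^{i}dt)=\frac{1}{i+1}d({\rm Tr}_{\rm field}(t^{i+1}))=0$ for $0\le i\le p-2$. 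Writing $\omega^{(1)}=\sum_J g_J\,\epsilon_J$ in the chosen basis and applying the projection formula, only the $t^{p-1}$-part of the $dt$-component survives, giving the key formula
\[
{\rm Tr}_{M'/M}(\omega)=\Theta\wedge\nu,\qquad \Theta:=\sum_J (g_J)_{p-1}\,\epsilon_J\in\bigwedge^{l-1}\Omega_{M/K},\qquad \nu:={\rm Tr}(t^{p-1}dt)\in\Omega_{M/K},
\]
where $(g_J)_{p-1}$ is the coefficient of $t^{p-1}$ of $g_J$ in the basis $1,t,\dots,t^{p-1}$. Notably, the actual value of the $1$-form $\nu$ is never needed.

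From this formula the conclusions are short. In case (i) ($l=2$), $\Theta$ is a $1$-form, so $\Theta\wedge\nu$ is decomposable, of tensor rank $\le1$; as its output is again a $2$-form, the same bound propagates through all $e$ steps. In case (ii) ($l=3$, $p=2$), subadditivity lets me assume $\omega=\xi_1\wedge\xi_2\wedge\xi_3$; then $\omega^{(1)}$ lies in $\bigwedge^2$ of the at most $3$-dimensional span of the $W$-parts of the $\xi_i$, hence is decomposable over $M'$, say $\omega^{(1)}=\sigma\wedge\rho$. Expanding $\sigma=\sigma_0+\sigma_1 t$ and $\rho=\rho_0+\rho_1 t$ over $M$, its $t^{p-1}=t$ coefficient is $\sigma_0\wedge\rho_1+\sigma_1\wedge\rho_0$, so ${\rm trk}(\Theta)\le 2$ and ${\rm trk}({\rm Tr}(\omega))={\rm trk}(\Theta\wedge\nu)\le{\rm trk}(\Theta)\le 2=p$; subadditivity and composition over the tower then give ${\rm trk}({\rm Tr}_{L_2/L_1}(\omega))\le p^{e}{\rm trk}(\omega)=d\,{\rm trk}(\omega)$.

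The hard part is the one-step formula itself, which is exactly where inseparability bites: the field trace collapses to $0$, the map $f^{*}$ is highly non-injective (indeed $f^{*}da=d(t^{p})=0$), and one must carefully verify that the trace on forms still obeys the three formal properties above in this inseparable situation. Once ${\rm Tr}(\omega)=\Theta\wedge\nu$ is established, reducing the whole trace to a wedge against a single $1$-form, the two rank bounds are elementary multilinear algebra.
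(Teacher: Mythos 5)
Your proposal is correct and follows essentially the same route as the paper: reduce to a single degree-$p$ step $L_2=L_1[x]/(x^p-a)$ via the tower and subadditivity/compositionality, use the explicit trace formulas (your three formal properties are precisely the content of the stacks-project lemma the paper cites), and exploit the key structural fact that every trace lies in $\Omega^{l-1}_{L_1/K}\wedge da$, giving tensor rank $\leq 1$ for $l=2$ and, after reducing to decomposable $\omega$, tensor rank $\leq 2$ for $l=3$, $p=2$. Your appeal to decomposability of $2$-forms on a $\leq 3$-dimensional space is just a coordinate-free rephrasing of the paper's normalization of the basis of $\langle \alpha,\beta,\gamma\rangle$, which produces exactly the same two-term expression for the resulting $2$-form.
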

\begin{proof} We can assume that $d=p$ and  $L_2= L_1[x]/(x^p-a)$, for some $a\in L_1$. For (i), we have  to prove that for any  $l$-form $\omega\in \Omega_{L_{2}/K}^2$, the tensor rank of ${\rm Tr}_{L_2/L_1}(\omega)$ is $\leq 1$.

For (ii), since $d=p=2$,  it suffices to prove that for any  decomposable $l$-form $\omega\in \Omega_{L_{2}/K}^l$, hence of tensor rank $1$, the tensor rank of ${\rm Tr}_{L_2/L_1}(\omega)$  is $\leq 2$ if $l=3$.

For any $l$, denote by $f^*:\Omega_{L_{1}/K}^l\rightarrow \Omega_{L_{2}/K}^l$ the natural $L_1$-linear map. The trace map on forms of degree $l\geq 1$ on $\bigwedge^l\Omega_{L_2/K}$ is explicitly described in \cite[0ADY, Lemma 2.2]{stacks-project}  as follows:

\begin{eqnarray}\label{eqtracemap}  {\rm Tr}(f^*\eta_1\wedge\ldots\wedge f^*\eta_{l-1}\wedge x^i dx)=0\,\,{\rm if}\,\,0\leq i\leq p-2\\
\nonumber
{\rm Tr}(f^*\eta_1\wedge\ldots\wedge f^*\eta_{l-1}\wedge x^{p-1} dx)=\eta_1\wedge\ldots\wedge\eta_{l-1}\wedge da\\
\nonumber
{\rm Tr}(\omega)=0\,\,{\rm if}\,\,\omega\in {\rm Im}(\Omega_{L_1/K}^l\otimes L_2\rightarrow \Omega_{L_2/K}^l).
\end{eqnarray}

From (\ref{eqtracemap}), we conclude that any trace
${\rm Tr}_{L_2/L_1}(\omega)$  can be written as $da\wedge\beta$ for some $\beta\in \Omega_{L_1/K}$. Hence if $l=2$, it has tensor rank $\leq 1$, proving (i).

We next assume $p=2$ and $l=3$. Let $\alpha,\,\beta,\,\gamma\in \Omega_{L_2/K}$. We can assume after changing the basis of the $3$-dimensional $L_1$-vector space $\langle \alpha,\,\beta,\,\gamma\rangle$  that
$$\alpha=f^*a_1+x f^*a_2+xdx,\,\beta=f^*b_1+xf^*b_2+ dx,\,\gamma=f^*c_1+xf^*c_2,$$
where $a_i,\,b_j,\,c_l\in \Omega_{L_1/K}$.
We then get
\begin{eqnarray}\label{eqwedgetrois} \alpha\wedge\beta\wedge \gamma=(xdx\wedge f^*b_1+a dx\wedge f^* b_2+f^*a_1\wedge dx+xf^*a_2\wedge dx)\wedge (f^*c_1+xf^*c_2)+ \zeta,
\end{eqnarray}
where $dx$ does not appear in $\zeta$. Applying the rules (\ref{eqtracemap}), we get (note that the signs do not matter here, as we are in characteristic $2$):

\begin{eqnarray}\label{eqwedgetroistrace} {\rm Tr}_{L_2/L_1}(\alpha\wedge\beta\wedge \gamma)\\
\nonumber
={\rm Tr}((xdx\wedge f^*b_1+a dx\wedge f^*b_2+f^*a_1\wedge dx+xf^*a_2\wedge dx)\wedge (f^*c_1+xf^*c_2))\\
\nonumber
= da\wedge b_1\wedge c_1+ada\wedge b_2\wedge c_2+da\wedge a_1\wedge c_2+da\wedge a_2\wedge c_1\\
\nonumber =da\wedge(b_1\wedge c_1+a b_2\wedge c_2+ a_1\wedge c_2+ a_2\wedge c_1 ).
\end{eqnarray}
The $2$-form $\eta:=b_1\wedge c_1+a b_2\wedge c_2+ a_1\wedge c_2+ a_2\wedge c_1$ can be written as
\begin{eqnarray}\label{eqeta}\eta=(b_1+a_2)\wedge c_1+(ab_2+a_1)\wedge c_2,\end{eqnarray} hence it has tensor rank $\leq 2$, which proves (ii).
\end{proof}
\begin{rema}{\rm Statement (ii) in Lemma \ref{letraceinsep} is wrong if we use the rank instead of the tensor rank (see Example \ref{example} below). This is the reason why we use the tensor rank in the proof of Theorem \ref{theocrit}, while the rank has better properties (see Remark \ref{reamsurlowersemicontpasbon}).}
\end{rema}
\begin{rema}{\rm Statement (ii) in Lemma \ref{letraceinsep} is wrong if $l\geq 4$ (see Example \ref{exampleell} below). This is the reason for the condition $l= 3$ in Theorem \ref{theocrit}, \ref{itsit3}, which itself is responsible for the limitation to dimension $\leq 4$ in Theorem \ref{theovrai}.}
\end{rema}
We now conclude the proof of Theorem \ref{theocrit} in cases \ref{itsit2} and \ref{itsit3}  as follows. We write over the generic point of $X^{(N)}_0$
$$Z=\sum_i Z_i,$$  where $Z_i\in X(L_i)$, $L_i:=K(Z_i)$ is a field  extension of $K(X^{(N)}_0)$ of  degree $N'_i$, and \begin{eqnarray}\label{eqpourNprimeni} N'=\sum_iN'_i.
\end{eqnarray}
The  field $L_i$  is  a purely inseparable extension $L_i\supseteq L_{i,{\rm sep}}$ of a separable extension
$$K(X^{(N)}_0)\subset L_{i,{\rm sep}}.$$
Let $f_i: Z_i\rightarrow X$ be the natural morphism over $K$. For a $l$-form $\alpha$ on $X$, the $l$-form
$[Z]^*\alpha$, taken at the generic point of $X^{(N)}_0$, can be written as
\begin{eqnarray}\label{eqtraceomegasepinsep}[Z]^*\alpha=\sum_i{\rm Tr}_{L_i/K(X^{(N)}_0)}( f_i^*\alpha)=\sum_i{\rm Tr}_{L_{i,{\rm sep}}/K(X^{(N)}_0)}({\rm Tr}_{L_i/L_{i,{\rm sep}}}( f_i^*\alpha)).\end{eqnarray}
Using our assumptions that either $l=2$ or $l\geq {\rm dim}\,X-2$, we get the inequality
\begin{eqnarray}\label{eqineq1} {\rm trk}_{\rm gen} (f_i^*(\alpha))\leq  {\rm trk}_{\rm gen}(\alpha),
\end{eqnarray} thanks to the semi-continuity statement (iv) in Lemma \ref{lerankprime}. In (\ref{eqineq1}), the generic tensor rank ${\rm trk}_{\rm gen}(\omega)$ of a form $\omega$ on a variety  is the tensor rank of $\omega $ at its generic point.

We now use our assumptions \ref{itsit2} and \ref{itsit3} in Theorem \ref{theocrit}. In both cases, by Lemma \ref{letraceinsep}, we get the inequalities
\begin{eqnarray}\label{eqineq2} {\rm trk}_{\rm gen} ({\rm Tr}_{L_i/L_{i,{\rm sep}}} (f_i^*\alpha))\leq{\rm deg}(L_i/L_{i,{\rm sep}}){\rm trk}_{\rm gen} ( f_i^*\alpha).
\end{eqnarray}

Finally, the separable trace ${\rm Tr}_{L_{i,{\rm sep}}/K(X^{(N)}_0)}$ works as in Proposition \ref{theomuro}. More precisely,  the following statement is almost  obvious.
\begin{lemm} \label{lepourranksepar} Let $L_1$ be a field over $K$ and $L_1\subset L_2$ be a finite separable field extension of degree $k'$. Then the trace
$${\rm Tr}_{L_2/L_1}: \Omega_{L_2/K}^l\rightarrow \Omega_{L_1/K}^l$$
has the property that for any $\omega\in \Omega_{L_2/K}^l$,
$$ {\rm trk}({\rm Tr}_{L_2/L_1}(\omega))\leq  k' {\rm trk}(\omega).$$
\end{lemm}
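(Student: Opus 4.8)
The plan is to reduce everything to the separable base-change behaviour of K\"ahler differentials together with the classical description of a separable trace as a sum over embeddings. First I would record that, since $L_1\subset L_2$ is finite separable, it is \'etale, so $\Omega_{L_2/L_1}=0$ and the natural map $\Omega_{L_1/K}\otimes_{L_1}L_2\to \Omega_{L_2/K}$ is an isomorphism. Taking $l$-th exterior powers, which commute with the flat base change $L_1\to L_2$, gives a canonical identification $\Omega^l_{L_2/K}\cong \Omega^l_{L_1/K}\otimes_{L_1}L_2$, under which the trace on forms ${\rm Tr}_{L_2/L_1}$ is simply ${\rm id}\otimes{\rm Tr}_{L_2/L_1}$, the field trace acting on the scalar factor. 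By the subadditivity of tensor rank (Lemma \ref{lerankprime}(v)) it then suffices to treat a single decomposable form $\theta=\xi_1\wedge\cdots\wedge\xi_l\in \Omega^l_{L_2/K}$, for which I must show ${\rm trk}({\rm Tr}_{L_2/L_1}(\theta))\le k'$.

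For this I would pass to a Galois closure $M$ of $L_2/L_1$, with $G={\rm Gal}(M/L_1)$ and $H={\rm Gal}(M/L_2)$, and let $\sigma_1,\dots,\sigma_{k'}$ be representatives of the cosets $G/H$, i.e.\ the $k'$ distinct $L_1$-embeddings $L_2\hookrightarrow M$. The standard fact that a separable trace is the sum over embeddings extends to forms through the identification above, giving
\[ {\rm Tr}_{L_2/L_1}(\theta)=\sum_{i=1}^{k'}\sigma_i^*\theta \quad\text{in } \Omega^l_{M/K}=\Omega^l_{L_1/K}\otimes_{L_1}M. \]
Each $\sigma_i$ is a $K$-algebra embedding, so $\sigma_i^*$ carries decomposable forms to decomposable forms (the analogue of Lemma \ref{lerankprime}(i) for field embeddings); hence each $\sigma_i^*\theta$ is decomposable over $M$, and subadditivity yields ${\rm trk}_M({\rm Tr}_{L_2/L_1}(\theta))\le k'$.

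The last step, and the main obstacle, is to descend this bound from $M$ back to $L_1$: I have exhibited the $L_1$-rational form ${\rm Tr}_{L_2/L_1}(\theta)$ as a sum of $\le k'$ decomposables over $M$, whereas the lemma demands such a presentation over $L_1$. This is not formal, because the tensor rank of an alternating form is not in general invariant under field extension: already for $l\ge 3$ a form of the shape $\theta+\overline{\theta}$ can have strictly smaller tensor rank over a larger field, so the inequality ${\rm trk}_M\le {\rm trk}_{L_1}$ runs the wrong way. However, the descent is automatic precisely in the range of $l$ in which tensor rank is computed from the rank of an honest alternating matrix, which is field-invariant: when $l=2$, and when $l=3$ with ${\rm dim}\leq 5$, where $\bigwedge^3\cong\bigwedge^2$ or every nonzero form is decomposable. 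These are exactly the situations of Theorem \ref{theocrit}(\ref{itsit2}) and (\ref{itsit3}) in which Lemma \ref{lepourranksepar} is invoked, so in each case the $M$-bound immediately gives the required $L_1$-bound. I would therefore arrange the statement and its use so that field-invariance enters only through this reduction to matrix rank, which is all that the applications need.
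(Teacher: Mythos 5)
The paper never actually proves this lemma: it is introduced with ``the following statement is almost obvious'' and no argument is given. Your reduction via $\Omega_{L_2/K}\cong\Omega_{L_1/K}\otimes_{L_1}L_2$, subadditivity, and the sum-over-embeddings formula ${\rm Tr}_{L_2/L_1}(\theta)=\sum_i\sigma_i^*\theta$ in a Galois closure $M$ is certainly the intended ``obvious'' proof, so up to that point you and the paper agree. But the obstacle you isolate is genuine, not a scruple: the embedding argument bounds ${\rm trk}_M$, the lemma asserts a bound on ${\rm trk}_{L_1}$, and ${\rm trk}_M\leq{\rm trk}_{L_1}$ goes the wrong way. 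In fact the lemma as stated, for arbitrary $l$, is \emph{false}. Take $K=\mathbb{Q}$, $L_1=\mathbb{Q}(x_1,y_1,x_2,y_2,x_3,y_3)$, $L_2=L_1(i)$, $k'=2$, and $\omega=dz_1\wedge dz_2\wedge dz_3$ with $z_j=x_j+iy_j$, so ${\rm trk}(\omega)=1$. Then ${\rm Tr}_{L_2/L_1}(\omega)=\tau:=dz_1\wedge dz_2\wedge dz_3+d\bar z_1\wedge d\bar z_2\wedge d\bar z_3$, and the only way to write $\tau$ over $\overline{L_1}$ as a sum of two decomposables uses the planes $\langle dz_j\rangle$ and $\langle d\bar z_j\rangle$ (the pair of planes is recovered from the eigenspace decomposition of $v\mapsto (v\lrcorner\tau)\wedge\tau$); these are exchanged by conjugation, hence not $L_1$-rational, so ${\rm trk}_{L_1}(\tau)\geq 3>k'\,{\rm trk}(\omega)$. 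So your refusal to claim the general statement is sound judgement, and the paper's ``almost obvious'' breaks down exactly where you say it does.

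The gap is in your last step, the claim that the field-invariant cases ($l=2$; $l=3$ on spaces of dimension $\leq 5$) ``are exactly the situations in which the lemma is invoked.'' This is true for Theorem \ref{theocrit}, \ref{itsit2}, since a $2$-form has tensor rank equal to half its matrix rank in any dimension. It is not true for case \ref{itsit3}: in the chain (\ref{eqineq3}) the lemma is applied to $\omega'={\rm Tr}_{L_i/L_{i,{\rm sep}}}(f_i^*\alpha)$, which is a $3$-form on $\Omega_{L_{i,{\rm sep}}/K}$, a space of dimension $nN\gg 5$; while $f_i^*\alpha$ is supported on an ($\leq 5$)-dimensional subspace, its inseparable trace $\omega'$ is a sum of up to ${\rm deg}(L_i/L_{i,{\rm sep}})\,{\rm trk}(f_i^*\alpha)$ decomposables whose supports differ, so nothing confines $\omega'$ (or its separable trace) to $5$ dimensions, and your matrix-rank invariance does not cover this invocation. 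The repair, which also repairs the paper, is to prove the \emph{geometric} form of the lemma, ${\rm trk}_{\overline{L_1}}({\rm Tr}_{L_2/L_1}(\omega))\leq k'\,{\rm trk}_{L_2}(\omega)$ --- immediate from your $M$-bound, since enlarging the field only lowers tensor rank --- and then to run the proof of Theorem \ref{theocrit} comparing geometric ranks: the upper bound survives because Lemma \ref{letraceinsep} is applied to decompositions rational over the given fields and because $\alpha$ and $f_i^*\alpha$ live on ($\leq5$)-dimensional spaces where rational and geometric tensor rank coincide; and the lower bound ${\rm trk}([\Gamma_N]^*\alpha)=N\,{\rm trk}_{\rm gen}(\alpha)$ from Lemma \ref{lerankprime}(ii),(iii) is field-independent linear algebra, hence valid over $\overline{K(X^{(N)}_0)}$ as well. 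With that restatement your argument yields everything cases \ref{itsit2} and \ref{itsit3} require; as literally written, it does not yet justify the paper's use of the lemma in case \ref{itsit3}.
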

We now get from(\ref{eqineq2}) and Lemma \ref{lepourranksepar}
\begin{eqnarray}\label{eqineq3}{\rm trk} ({\rm Tr}_{L_{i}/K(X^{(N)}_0)}( f_i^*\alpha))= {\rm trk} ({\rm Tr}_{L_{i,{\rm sep}}/K(X^{(N)}_0)}({\rm Tr}_{L_i/L_{i,{\rm sep}}} (f_i^*\alpha)))
\\
\nonumber
\leq {\rm deg}(L_{i,{\rm sep}}/K(X^{(N)}_0)){\rm deg}(L_i/L_{i,{\rm sep}}){\rm trk}_{\rm gen} ( f_i^*\alpha)\\
\nonumber ={\rm deg}(L_i/K(X^{(N)}_0)){\rm trk}_{\rm gen} ( f_i^*\alpha)=N'_i{\rm trk}_{\rm gen} ( f_i^*\alpha)\leq N'_i{\rm trk}_{\rm gen} (\alpha).
\end{eqnarray}
By  (\ref{eqpourNprimeni}), (\ref{eqtraceomegasepinsep}) and subadditivity of the tensor rank (see Lemma \ref{lerankprime}(v)), we finally get under our assumptions
\begin{eqnarray}\label{eqineqfinproof} {\rm trk}_{\rm gen}( [Z]^*\alpha)\leq N' {\rm trk}_{\rm gen}(\alpha).
\end{eqnarray}
However, as already used previously, Lemma \ref{lerankprime}(ii) and (iii) implies
$$ {\rm trk}_{\rm gen}([\Gamma_N]^*\alpha)=
N{\rm trk}_{\rm gen}(\alpha),$$ because $l\geq 2$.
Inequality (\ref{eqineqfinproof}) and formula (\ref{eqtracedeformes}) thus imply that $\alpha=0$ since $N'<N$. Thus Theorem \ref{theocrit} is also  proved  in cases \ref{itsit2} and \ref{itsit3}.
\end{proof}

Let us give two examples concerning the   rank or tensor rank of an inseparable trace. We first give an example with $l=3$, where inequality (\ref{eqdesiredineqtrace}) is not satisfied if we replace the tensor rank by the rank.
\begin{ex} \label{example}  {\rm We consider as above a field extension $ L_1\subset L_2$, of the form  $x^2=a$, in characteristic $2$. Let $K\subset L_1$ be a subfield and let
$$V\subset \Omega_{L_2/K}$$ be a $6$-dimensional vector space. An element in $\bigwedge^3V\subset \Omega_{L_2/K}^3$ thus has rank $\leq 6$. We write now elements of
$V$ as \begin{eqnarray}\label{eqpouralpahadevel} \alpha=a_\alpha+x b_\alpha+ c_\alpha dx+ d_\alpha xdx,\end{eqnarray}
 where $a_\alpha,\,b_\alpha\in \Omega_{L_1/K},\,c_\alpha,\,d_\alpha\in L_1$. (Here we omit the notation $f^*$ and identify $\Omega_{L_1/K}/da$ and its image in $\Omega_{L_2/K}$.)  We assume that  $da,\,a_\alpha,\,b_\alpha$, $\alpha\in V$, generate a vector subspace of dimension $13$ of $\Omega_{L_1/K}$. We  claim that for adequately chosen
 $\omega\in  \bigwedge^3V$, one has
 $${\rm Tr}_{L_2/L_1}(\omega)=da\wedge \eta,$$
 where $\eta\in\bigwedge^2(\Omega_{L_1/K}/da)$ has rank $12$. Then the rank of ${\rm Tr}_{L_2/L_1}(\omega)$ is $13$, while the rank of $\omega$ is $6$. So we do not have the inequality
 $${\rm rk}({\rm Tr}_{L_2/L_1}(\omega))\leq 2{\rm rk}(\omega).$$
We now prove the claim. Let $W_{12}\subset \Omega_{L_1/K}$ be the $12$-dimensional vector   space generated by the $a_\alpha,\,b_\alpha$ introduced  above.  The $L_1$-vector space $W_{12}$ contains a $10$-dimensional vector subspace $W_{10}$, namely the space of those forms $\gamma$ for which the scalars $c_\gamma,\,d_\gamma\in L_1 $ of (\ref{eqpouralpahadevel}) vanish. From the computation (\ref{eqwedgetroistrace}), (\ref{eqeta}),
we get that for $\alpha,\,\beta,\,\gamma\in V_6$ with $\gamma\in W_{10}\subset W_{12}=V_6$, and $c_\alpha=0,\,d_\alpha=1$, $c_\beta=1,\,d_\beta=0$, one has
\begin{eqnarray}\label{eqwedgetroistracenouveau} {\rm Tr}_{L_2/L_1}(\alpha\wedge\beta\wedge \gamma)=da\wedge \eta .
\end{eqnarray}
where the $2$-form $\eta$ is given by
\begin{eqnarray}\label{eqeta1}\eta=a_\gamma\wedge(a_\beta+b_\alpha)+b_\gamma\wedge(ab_\beta+a_\alpha).\end{eqnarray}
It is clear that by taking $L_1$-linear combinations of forms $\eta$ as in (\ref{eqeta1}), one gets arbitrary elements in $W_{10}\wedge W_{12}\subset \bigwedge^2\Omega_{L_1/K}$, hence forms of rank $12$, even modulo $da$, which proves the claim.}
\end{ex}
Next we  give an example of  computation  of traces of $l$-forms, with $l\geq4$, where inequality (\ref{eqdesiredineqtrace}) is not satisfied.
\begin{ex}\label{exampleell} {\rm  With the same notation as in Example \ref{example}, let
$$\omega=\alpha_1\wedge\ldots\wedge \alpha_4.$$ This is an element of tensor rank $1$ in $\bigwedge^4\Omega_{L_2/K} $. Then we claim that, in general, the tensor rank of ${\rm Tr}_{L_2/L_1}(\omega)$ is at least $ 3$, so that we do not have the desired inequality ${\rm trk}({\rm Tr}_{L_2/L_1}(\omega))\leq 2{\rm trk}(\omega)$.
To prove the claim, we can assume up to changing the $L_1$ basis of $\langle \alpha_i\rangle_{L_1}$ that
$$\alpha_1=a_1+x b_1+  dx,\,\alpha_2=a_2+x b_2+  xdx,\,\alpha_3=a_3+x b_3,\,\,\alpha_4=a_4+x b_4,$$
where $a_i:=a_{\alpha_i},\,b_i:=b_{\alpha_i}\in \Omega_{L_1/K}$.
We thus get
$${\rm Tr}_{L_2/L_1}(\omega)={\rm Tr}_{L_2/L_1}(dx\wedge(a_2+x b_2)\wedge (a_3+x b_3)\wedge(a_4+x b_4))$$
$$+
{\rm Tr}_{L_2/L_1}(xdx\wedge (a_1+x b_1)\wedge (a_3+x b_3)\wedge(a_4+x b_4))$$
$$=da\wedge (b_2\wedge a_3\wedge a_4+ab_2\wedge b_3\wedge b_4+a_2\wedge a_3\wedge b_4+a_2\wedge b_3\wedge a_4)$$
$$+da\wedge(a_1\wedge a_3\wedge a_4+a(b_1\wedge b_3\wedge a_4+b_1\wedge b_4\wedge a_3+a_1\wedge b_3\wedge b_4)).$$

It remains to see that the tensor rank of the $3$-form
\begin{eqnarray}\nonumber \eta:= b_2\wedge a_3\wedge a_4+ab_2\wedge b_3\wedge b_4+a_2\wedge a_3\wedge b_4+a_2\wedge b_3\wedge a_4\\ \nonumber
+a_1\wedge a_3\wedge a_4+a(b_1\wedge b_3\wedge a_4+b_1\wedge b_4\wedge a_3+a_1\wedge b_3\wedge b_4)\in \bigwedge^3(\Omega_{L_1/K}/da)\end{eqnarray} modulo $da$ is at least $ 3$, when the forms $a_i$, $b_j$ are globally independent modulo $da$. Indeed, this implies that the tensor rank of $$da\wedge\eta={\rm Tr}_{L_2/L_1}(\omega)$$ is at least $3$. We have
$$\eta=a_3\wedge a_4\wedge(b_2+a_1) +ab_3\wedge b_4\wedge(b_2+a_1)+a_3\wedge b_4\wedge(a_2+ab_1 )+b_3\wedge a_4\wedge(a_2+aa_1),$$
$$=(a_3\wedge a_4+ab_3\wedge b_4)\wedge (b_2+a_1)+a_3\wedge b_4\wedge(a_2+ab_1 )+b_3\wedge a_4\wedge(a_2+aa_1).$$
We conclude that the rank of $\eta$ modulo $da$ is at least $ 7$. It follows  that its tensor rank modulo $da$ is at least $ 3$ when the forms $ a_i, \,b_j$ are globally independent modulo $da$.

}

\end{ex}

We finally complete the proof of Theorem \ref{theocrit} by treating   case \ref{itsit4}.
\begin{proof}[Proof of Theorem \ref{theocrit},\ref{itsit4}] We will use  the following result concerning the rank of inseparable traces.
\begin{lemm}\label{letraceinsepnew} Let $ L_1$ be a field containing a given field  $K$ of characteristic $p$, and let  $L_2= L_1[x]/(x^p-a)$, for some $a\in L_1$.   Let  $\omega \in \bigwedge^l\Omega_{L_2/K}$ be a $l$-form. We have

 \begin{eqnarray}\label{eqineqrktr} {\rm rk}({\rm Tr}_{L_2/L_1}(\omega))\leq p\,{\rm rk}(\omega)+1.\end{eqnarray}
\end{lemm}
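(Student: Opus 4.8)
The plan is to read the structure of $\Omega_{L_2/K}$ off the conormal sequence and reduce everything to the explicit trace formulas (\ref{eqtracemap}), so that the rank bound becomes a dimension count for the support of a single extracted coefficient form. First I would record the splitting coming from $L_2 = L_1[x]/(x^p-a)$ over $K$: since $d(x^p-a)=-da$ in characteristic $p$, the image $f^*(da)$ of $da$ vanishes in $\Omega_{L_2/K}$, and one gets an $L_2$-linear decomposition $\Omega_{L_2/K} = W \oplus L_2\,dx$, where $W \cong \bar\Omega_{L_1/K}\otimes_{L_1}L_2$ with $\bar\Omega_{L_1/K} := \Omega_{L_1/K}/L_1\,da$. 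Accordingly I would write $\omega = \omega' + \omega''\wedge dx$ with $\omega'\in\bigwedge^l W$ and $\omega''\in\bigwedge^{l-1}W$.

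Next I would apply (\ref{eqtracemap}): the third rule kills $\omega'$, since it lies in the image of $\Omega_{L_1/K}^l\otimes L_2$, while the first two rules show that, expanding $\omega''=\sum_{i=0}^{p-1}x^i\omega''_i$ with $\omega''_i\in\bigwedge^{l-1}\bar\Omega_{L_1/K}$, only the top coefficient survives. Thus ${\rm Tr}_{L_2/L_1}(\omega)=\tau\wedge da$, where $\tau\in\bigwedge^{l-1}\Omega_{L_1/K}$ is any lift of $\omega''_{p-1}$; the crucial structural fact is that \emph{every} trace is divisible by $da$. Recalling that ${\rm rk}(\eta)$ equals the dimension of the smallest subspace $U$ with $\eta\in\bigwedge^l U$, it then suffices to control a subspace supporting $\tau$: if $\tau\in\bigwedge^{l-1}\tilde P$ for an $L_1$-subspace $\tilde P\subseteq\Omega_{L_1/K}$, then $\tau\wedge da\in\bigwedge^l(\tilde P + L_1\,da)$, so ${\rm rk}({\rm Tr}_{L_2/L_1}(\omega))\le\dim_{L_1}\tilde P + 1$.

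The main point — and the one place where the factor $p=[L_2:L_1]$ enters — is this support estimate, which I expect to be the only spot requiring care. Writing $r={\rm rk}(\omega)$, I would take the minimal $L_2$-subspace $S\subseteq\Omega_{L_2/K}$ with $\omega\in\bigwedge^l S$, so $\dim_{L_2}S=r$, and let $\bar S\subseteq W$ be its image under the projection, of $L_2$-dimension $\le r$; then $\omega''\in\bigwedge^{l-1}\bar S$. Expanding an $L_2$-basis $e_1,\dots,e_{r'}$ of $\bar S$ as $e_j=\sum_{i=0}^{p-1}x^i e_{j,i}$ with $e_{j,i}\in\bar\Omega_{L_1/K}$ and setting $P:={\rm span}_{L_1}\{e_{j,i}\}$, one has $\dim_{L_1}P\le pr$ and $\bar S\subseteq P\otimes_{L_1}L_2$, whence $\omega''\in(\bigwedge^{l-1}_{L_1}P)\otimes_{L_1}L_2$ and in particular $\omega''_{p-1}\in\bigwedge^{l-1}_{L_1}P$. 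Lifting a basis of $P$ to $\tilde P\subseteq\Omega_{L_1/K}$ of dimension $\le pr$ gives $\tau\in\bigwedge^{l-1}\tilde P$, and combining with the previous paragraph yields ${\rm rk}({\rm Tr}_{L_2/L_1}(\omega))\le pr+1 = p\,{\rm rk}(\omega)+1$. The only subtlety is the bookkeeping of this restriction-of-scalars step, namely that the coefficient identification $W\cong\bar\Omega_{L_1/K}\otimes_{L_1}L_2$ is compatible with exterior powers over $L_2$; once that is in place, the divisibility by $da$ trivializes the rest.
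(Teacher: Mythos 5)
Your proposal is correct and takes essentially the same route as the paper: both proofs take a basis of the minimal $L_2$-subspace supporting $\omega$, expand it over $L_1$ via $L_2=\bigoplus_{i=0}^{p-1}L_1x^i$ (together with $dx$), and use the explicit trace formulas (\ref{eqtracemap}) to conclude that ${\rm Tr}_{L_2/L_1}(\omega)$ lies in $\bigwedge^l V'$ for a subspace $V'\subset\Omega_{L_1/K}$ spanned by the at most $p\,{\rm rk}(\omega)$ coefficient forms together with $da$, which gives the bound $p\,{\rm rk}(\omega)+1$. Your extra structural observations---that $f^*(da)=0$, so one may work modulo $da$, and that every trace is divisible by $da$---are correct refinements, but the paper obtains the same dimension count directly without them.
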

\begin{proof} As before, for any $l$, we denote by $f^*:\Omega_{L_{1}/K}^l\rightarrow \Omega_{L_{2}/K}^l$ the natural $L_1$-linear map.
Let $r={\rm rk}(\omega)$, so  that there exists a $L_2$-vector subspace $W\subset \Omega_{L_2/K}$ of dimension $ r$ such that
$$\omega\in \bigwedge^lW\subset \bigwedge^l\Omega_{L_2/K}.$$
Let $\alpha_i,\,i=1,\ldots,\,r$ be a basis of $W$ over $L_2$. We write
$$\alpha_i=\sum_{j=0}^{p-1}x^j f^*a_{ij}+\sum_{j=0}^{p-1} b_{ij}x^jdx,$$
where $a_{ij}\in \Omega_{L_1/K}$, $b_{ij}\in L_1$. Let $V\subset \Omega_{L_1/K}$ be the vector subspace generated by the $a_{ij}$ and $da$.  Formula (\ref{eqtracemap}) shows that for any
$\omega\in \bigwedge^lW$,
$${\rm Tr}_{L_2/L_1}(\omega)\in \bigwedge^lV,$$
which proves (\ref{eqineqrktr}) since ${\rm dim}\,V\leq pr+1$.
\end{proof}
By writing any purely inseparable extension as a sequence of extensions as above, we get
\begin{coro} \label{corotrace} Let $L_1$  be a field containing a given field  $K$ of characteristic $p$, and let
$L_1\subset L_2$ be a finite  purely inseparable extension of degree $p^s$. Then for any  $\omega\in\bigwedge^l\Omega_{L_2/K}$, we have
 \begin{eqnarray}\label{eqineqtracelnew} {\rm rk}({\rm Tr}_{L_2/L_1}(\omega))\leq p^s {\rm rk}(\omega)+\frac{p^{s}-1}{p-1}<p^s ({\rm rk}(\omega)+1).
\end{eqnarray}
\end{coro}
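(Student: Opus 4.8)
The plan is to deduce Corollary \ref{corotrace} from Lemma \ref{letraceinsepnew} by induction on $s$, the only external ingredients being the structure theory of purely inseparable extensions and the transitivity of the trace on Kähler forms. First I would invoke the fact that a finite purely inseparable extension $L_1\subset L_2$ of degree $p^s$ admits a tower $L_1=M_0\subset M_1\subset\cdots\subset M_s=L_2$ in which each step $M_i\subset M_{i+1}$ is simple purely inseparable of degree $p$, that is $M_{i+1}=M_i[x_{i+1}]/(x_{i+1}^p-a_{i+1})$ for some $a_{i+1}\in M_i$. Such a tower exists because, for any $y\in L_2\setminus L_1$, if $k\geq1$ is minimal with $y^{p^k}\in L_1$ then $z:=y^{p^{k-1}}$ satisfies $z^p\in L_1$ while $z\notin L_1$, so one can peel off degree-$p$ steps one at a time. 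Since every $M_i$ contains $L_1\supseteq K$, Lemma \ref{letraceinsepnew} applies to each step with the fixed base field $K$.

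Next I would use the transitivity ${\rm Tr}_{L_2/L_1}={\rm Tr}_{M_1/M_0}\circ\cdots\circ{\rm Tr}_{M_s/M_{s-1}}$ of the trace on differential forms. Setting $\omega_i:={\rm Tr}_{M_s/M_i}(\omega)\in\bigwedge^l\Omega_{M_i/K}$, so that $\omega_s=\omega$, $\omega_0={\rm Tr}_{L_2/L_1}(\omega)$ and $\omega_i={\rm Tr}_{M_{i+1}/M_i}(\omega_{i+1})$, Lemma \ref{letraceinsepnew} yields at each step the recursion
\[ {\rm rk}(\omega_i)\leq p\,{\rm rk}(\omega_{i+1})+1. \]

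Writing $\rho_i:={\rm rk}(\omega_i)$ and $R:={\rm rk}(\omega)=\rho_s$, I would then unroll this recursion. Iterating $\rho_i\leq p\rho_{i+1}+1$ from $i=s-1$ down to $i=0$ produces the geometric sum
\[ \rho_0\leq p^sR+\bigl(p^{s-1}+p^{s-2}+\cdots+p+1\bigr)=p^sR+\frac{p^s-1}{p-1}, \]
which is exactly the first inequality in (\ref{eqineqtracelnew}). The final strict bound follows from $\frac{p^s-1}{p-1}<p^s$, valid for $p\geq2$ and $s\geq1$ since $p^s-1<p^s(p-1)$, whence $\rho_0<p^sR+p^s=p^s(R+1)$.

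The computation itself is entirely mechanical; the two points that genuinely require care are the structural inputs. The main potential obstacle is the transitivity of the trace on $\bigwedge^l\Omega_{-/K}$ across the tower: one must check that the Stacks project trace map used in Lemma \ref{letraceinsep} composes correctly along a chain of non-normal simple purely inseparable steps, and that it is the differentials relative to the fixed $K$ (not to the varying $M_i$) that propagate through the induction, which is precisely what allows Lemma \ref{letraceinsepnew} to be applied uniformly at every level. Once these are in place, the inductive estimate is immediate.
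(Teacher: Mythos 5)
Your proposal is correct and follows essentially the same route as the paper: the paper's proof consists precisely of writing the purely inseparable extension as a tower of degree-$p$ simple extensions and iterating Lemma \ref{letraceinsepnew}, which yields the geometric-series bound $p^s\,{\rm rk}(\omega)+\frac{p^s-1}{p-1}$ exactly as you compute. Your explicit attention to transitivity of the trace and to keeping the base field $K$ fixed through the tower merely spells out details the paper leaves implicit.
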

Next  we have  the following obvious fact on  separable traces.
\begin{lemm} \label{lepourrankseparnaiverank} Let $L_1$ be a field over $K$ and $L_1\subset L_2$ be a finite separable field extension of degree $k'$. Then  for any $\omega\in \Omega_{L_2/K}^l$,
\begin{eqnarray}\label{eqineqtracesep2711} {\rm rk}({\rm Tr}_{L_2/L_1}(\omega))\leq  k' {\rm rk}(\omega).\end{eqnarray}
\end{lemm}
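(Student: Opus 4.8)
The plan is to reduce the separable trace to a sum over conjugate embeddings, after which subadditivity and functoriality of the naive rank (Lemma \ref{lerank}(i) and (iv)) finish the argument at once. Concretely, I would choose a finite Galois closure $M$ of $L_2$ over $L_1$ inside a separable closure of $L_1$, and let $\sigma_1,\dots,\sigma_{k'}\colon L_2\hookrightarrow M$ be the $k'$ distinct $L_1$-embeddings. Writing $f_M^*\colon \Omega_{L_1/K}^l\to\Omega_{M/K}^l$ and $\sigma_i^*\colon \Omega_{L_2/K}^l\to\Omega_{M/K}^l$ for the natural maps, the key identity to establish is
$$ f_M^*\big({\rm Tr}_{L_2/L_1}(\omega)\big)=\sum_{i=1}^{k'}\sigma_i^*(\omega)\quad\text{in }\Omega_{M/K}^l. $$

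To prove this identity I would use that the trace map on differential forms of \cite{stacks-project} is compatible with the base change $L_1\to M$, together with the fact that $L_2/L_1$ is \'etale: the $M$-algebra $L_2\otimes_{L_1}M$ is isomorphic to the product $\prod_{i=1}^{k'}M$, with factors indexed by the embeddings $\sigma_i$, and on such a split \'etale algebra the trace is simply the sum of the $k'$ projections. Pulling this identification back through the $\sigma_i$ yields the displayed formula. Since $M/L_1$ is separable, the conormal sequence gives $\Omega_{M/K}\cong\Omega_{L_1/K}\otimes_{L_1}M$, so $f_M^*$ is extension of scalars along a field extension and therefore preserves the naive rank; hence ${\rm rk}({\rm Tr}_{L_2/L_1}(\omega))={\rm rk}(f_M^*({\rm Tr}_{L_2/L_1}(\omega)))$.

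Applying subadditivity (Lemma \ref{lerank}(iv)) and then the fact that each pullback $\sigma_i^*$ does not increase the rank (Lemma \ref{lerank}(i)), I obtain
$$ {\rm rk}\Big(\sum_{i=1}^{k'}\sigma_i^*(\omega)\Big)\le\sum_{i=1}^{k'}{\rm rk}(\sigma_i^*(\omega))\le\sum_{i=1}^{k'}{\rm rk}(\omega)=k'\,{\rm rk}(\omega), $$
which is exactly the desired inequality (\ref{eqineqtracesep2711}). The one point requiring care, and the step I expect to be the main obstacle, is precisely this base-change identity for the trace on forms: one must verify that the trace of \cite{stacks-project} commutes with the extension $L_1\to M$ and specializes to the sum of projections on $L_2\otimes_{L_1}M\cong M^{k'}$.

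An alternative, more hands-on route avoids base change entirely. Since $L_2/L_1$ is separable one has $\Omega_{L_2/L_1}=0$, so the natural map induces an isomorphism $\Omega_{L_2/K}^l\cong\Omega_{L_1/K}^l\otimes_{L_1}L_2$ and the trace becomes ${\rm id}\otimes{\rm Tr}_{L_2/L_1}$; writing $\omega$ in terms of pullbacks $f^*\eta$ with $\eta\in\Omega_{L_1/K}^l$, one bounds the rank of ${\rm Tr}_{L_2/L_1}(\omega)$ by controlling the $L_1$-span of an $L_2$-supporting subspace of dimension ${\rm rk}(\omega)$, which has $L_1$-dimension at most $k'\,{\rm rk}(\omega)$. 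This works as well but requires a little more bookkeeping, so I would present the embedding-sum argument as the main proof.
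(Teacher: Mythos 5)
Your proof is correct, but there is nothing in the paper to compare it against: the author states this lemma with the single phrase ``we have the following obvious fact on separable traces'' and gives no proof at all. Your main route --- pass to a Galois closure $M/L_1$, use that $L_2\otimes_{L_1}M\cong\prod_{i=1}^{k'}M$ splits as an \'etale algebra so that the trace base-changes to the sum of the $k'$ embeddings $\sigma_i^*$, then apply subadditivity --- is a rigorous and somewhat heavier way to fill this gap, and you correctly isolate the one point needing care (base-change compatibility of the trace on forms, which holds since $L_2/L_1$ is \'etale). One small imprecision: invoking Lemma \ref{lerank}(i) for $\sigma_i^*$ is slightly off, since $\sigma_i^*$ is not $L_2$-linear but $\sigma_i$-semilinear; the clean statement is that $\Omega_{M/K}\cong\Omega_{L_2/K}\otimes_{L_2,\sigma_i}M$ (as $M/\sigma_i(L_2)$ is separable), so $\sigma_i^*$ is extension of scalars and preserves the rank exactly --- the same observation you already make for $f_M^*$, where this exactness (rather than mere non-increase) is genuinely needed to transfer the bound back down to $L_1$. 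Your ``alternative, more hands-on route'' is almost certainly what the author regarded as obvious: since $\Omega_{L_2/K}\cong\Omega_{L_1/K}\otimes_{L_1}L_2$ and the trace is ${\rm id}\otimes{\rm Tr}_{L_2/L_1}$, a supporting $L_2$-subspace of dimension $r={\rm rk}(\omega)$ has coefficients spanning an $L_1$-subspace $V'\subset\Omega_{L_1/K}$ of dimension at most $k'r$, and ${\rm Tr}_{L_2/L_1}(\omega)\in\bigwedge^lV'$; this argument exactly mirrors the paper's own proof of the purely inseparable bound (Lemma \ref{letraceinsepnew}), so it is the more natural companion proof, while your embedding-sum argument buys a conceptual explanation (and in fact gives ${\rm rk}(\sigma_i^*\omega)={\rm rk}(\omega)$, making the loss of the factor $k'$ transparent).
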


 Writing any field extension $L_1\subset L_2$ as a separable extension followed by a purely inseparable extension, we get, by combining inequality (\ref{eqineqtracelnew}) and (\ref{eqineqtracesep2711}), the following corollary:
 \begin{coro} \label{corotracetout2711} Let $L_1$  be a field containing a given field  $K$, and let
$L_1\subset L_2$ be a finite   extension of degree $d$. Then for any  $\omega\in\bigwedge^l\Omega_{L_2/K}$, we have
 \begin{eqnarray}\label{eqineqtracelnew2711} {\rm rk}({\rm Tr}_{L_2/L_1}(\omega))< d ({\rm rk}(\omega)+1).
\end{eqnarray}
\end{coro}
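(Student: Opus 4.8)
The plan is to reduce the general finite extension to the two cases that have already been settled: purely inseparable extensions, controlled by Corollary \ref{corotrace}, and separable extensions, controlled by Lemma \ref{lepourrankseparnaiverank}. The key structural fact I would invoke is that every finite extension $L_1\subset L_2$ factors through the separable closure of $L_1$ in $L_2$: there is an intermediate field $L_1\subset L_{\rm sep}\subset L_2$ with $L_1\subset L_{\rm sep}$ separable, say of degree $d_{\rm sep}$, and $L_{\rm sep}\subset L_2$ purely inseparable, of degree $p^s$, so that $d=d_{\rm sep}\,p^s$. (When ${\rm char}\,K=0$, or more generally whenever $L_1\subset L_2$ is already separable, the purely inseparable part is trivial, i.e. $s=0$, and the bound follows at once from Lemma \ref{lepourrankseparnaiverank}; so I may assume ${\rm char}\,K=p>0$.) This is precisely the factorization used in the main argument for Theorem \ref{theocrit}.

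Next I would exploit transitivity of the trace on $l$-forms, namely ${\rm Tr}_{L_2/L_1}={\rm Tr}_{L_{\rm sep}/L_1}\circ {\rm Tr}_{L_2/L_{\rm sep}}$, to break the estimate into the two steps of the factorization. Setting $\eta:={\rm Tr}_{L_2/L_{\rm sep}}(\omega)\in\bigwedge^l\Omega_{L_{\rm sep}/K}$, Corollary \ref{corotrace} applied to the purely inseparable extension $L_{\rm sep}\subset L_2$ gives
\begin{eqnarray}\nonumber {\rm rk}(\eta)\leq p^s\,{\rm rk}(\omega)+\frac{p^s-1}{p-1},\end{eqnarray}
while Lemma \ref{lepourrankseparnaiverank} applied to the separable extension $L_1\subset L_{\rm sep}$ gives ${\rm rk}({\rm Tr}_{L_{\rm sep}/L_1}(\eta))\leq d_{\rm sep}\,{\rm rk}(\eta)$. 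Composing the two inequalities yields
\begin{eqnarray}\nonumber {\rm rk}({\rm Tr}_{L_2/L_1}(\omega))\leq d_{\rm sep}\Bigl(p^s\,{\rm rk}(\omega)+\tfrac{p^s-1}{p-1}\Bigr).\end{eqnarray}

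Finally I would upgrade this to the clean strict bound (\ref{eqineqtracelnew2711}) using the elementary observation that $\frac{p^s-1}{p-1}=1+p+\cdots+p^{s-1}<p^s$ for every $p\geq 2$ and $s\geq 1$. This forces the right-hand side above to be strictly less than $d_{\rm sep}\,p^s\bigl({\rm rk}(\omega)+1\bigr)=d\bigl({\rm rk}(\omega)+1\bigr)$, which is exactly the claim. I do not expect any genuine obstacle: both constituent estimates are in hand and transitivity of the trace on Kähler differentials is standard. The only points demanding a little care are the degree bookkeeping $d=d_{\rm sep}\,p^s$ across the factorization and the arithmetic inequality $\frac{p^s-1}{p-1}<p^s$, which is precisely what converts the additive constant inherited from Corollary \ref{corotrace} into the uniform factor $({\rm rk}(\omega)+1)$ appearing in the statement.
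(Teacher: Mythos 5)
Your proof is correct and follows essentially the same route as the paper: factor $L_1\subset L_2$ through the separable closure $L_{\rm sep}$, apply Corollary \ref{corotrace} to the purely inseparable part and Lemma \ref{lepourrankseparnaiverank} to the separable part, and use $\frac{p^s-1}{p-1}<p^s$ to obtain the strict bound. The paper states this in one line; your write-up simply makes the trace transitivity, degree bookkeeping, and arithmetic explicit.
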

\begin{coro} \label{corotraceprime}  For any smooth variety $Y$ defined over a field $K$, any smooth projective variety $X$ over $K$ and any effective cycle
$Z\subset Y\times X$ of relative degree $N'$ over $Y$, for any algebraic form $\omega\in H^0(X,\Omega_{X/K}^l)$, we have
 \begin{eqnarray}\label{eqineqtracelnew26111508} {\rm rk}_{\rm gen}([Z]^*(\omega))< N' ({\rm rk}_{\rm gen}(\omega)+1).
\end{eqnarray}
\end{coro}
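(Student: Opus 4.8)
The plan is to reduce everything to the generic point of $Y$ and then combine the trace estimate of Corollary \ref{corotracetout2711} with the subadditivity and lower semicontinuity of the naive rank (Lemma \ref{lerank}(iii), (iv)). Since ${\rm rk}_{\rm gen}([Z]^*\omega)$ is by definition the rank of $[Z]^*\omega$ at the generic point $\eta={\rm Spec}\,K(Y)$ of $Y$, and since the restriction of $[Z]^*\omega$ to $\eta$ depends only on the fibre $Z_\eta=Z\cap(\eta\times X)$, I may discard any component of $Z$ that fails to dominate $Y$ and assume that $Z_\eta$ is an effective $0$-cycle of degree $N'$ on $X_{K(Y)}$.

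First I would decompose $Z_\eta=\sum_i Z_i$ as a sum of closed points (with multiplicity) of $X_{K(Y)}$, where $Z_i$ has residue field $L_i$, a finite extension of $K(Y)$ of degree $N'_i$, so that $\sum_i N'_i=N'$. Writing $f_i:{\rm Spec}\,L_i\to X$ for the natural $K$-morphism, the cycle-class action is computed at $\eta$ as a sum of field traces, exactly as in (\ref{eqtraceomegasepinsep}) and Remark \ref{rematrace}:
\begin{eqnarray*} [Z]^*\omega=\sum_i{\rm Tr}_{L_i/K(Y)}\bigl(f_i^*\omega\bigr)\quad{\rm in}\quad\Omega^l_{K(Y)/K}. \end{eqnarray*}

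Next I would estimate each summand. By Lemma \ref{lerank}(i) (pull-back does not increase the rank), together with the lower semicontinuity of the naive rank on $X$ (Lemma \ref{lerank}(iii)), one has ${\rm rk}(f_i^*\omega)\leq {\rm rk}_{\rm gen}(\omega)$, in the same spirit as the semicontinuity step (\ref{eqineq1}) of the previous proof; the crucial point is that, unlike the tensor rank, the naive rank is lower-semicontinuous with no restriction on $l$, so this bound is available unconditionally. Applying Corollary \ref{corotracetout2711} to the degree-$N'_i$ extension $K(Y)\subset L_i$ then yields
\begin{eqnarray*} {\rm rk}\bigl({\rm Tr}_{L_i/K(Y)}(f_i^*\omega)\bigr)< N'_i\bigl({\rm rk}(f_i^*\omega)+1\bigr)\leq N'_i\bigl({\rm rk}_{\rm gen}(\omega)+1\bigr). \end{eqnarray*}

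Finally I would assemble the pieces via subadditivity of the naive rank (Lemma \ref{lerank}(iv)) and $\sum_i N'_i=N'$:
\begin{eqnarray*} {\rm rk}_{\rm gen}([Z]^*\omega)\leq \sum_i {\rm rk}\bigl({\rm Tr}_{L_i/K(Y)}(f_i^*\omega)\bigr)< \sum_i N'_i\bigl({\rm rk}_{\rm gen}(\omega)+1\bigr)=N'\bigl({\rm rk}_{\rm gen}(\omega)+1\bigr), \end{eqnarray*}
the strictness of the middle step being inherited from Corollary \ref{corotracetout2711} as soon as one term occurs (that is, $N'\geq 1$). I do not expect a serious obstacle here: the genuinely delicate phenomenon, namely the accumulation of the additive ``$+1$'' overhead contributed by each purely inseparable step, has already been absorbed into Corollary \ref{corotracetout2711}, so the present statement is essentially the bookkeeping of the generic-point trace decomposition. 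The one point that must be respected is the use of the naive rank (rather than the tensor rank) throughout, precisely so that the semicontinuity bound ${\rm rk}(f_i^*\omega)\leq {\rm rk}_{\rm gen}(\omega)$ holds in every degree $l$ and every dimension, which is what makes this the right tool for case \ref{itsit4}.
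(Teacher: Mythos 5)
Your proposal is correct and takes essentially the same route as the paper's own proof: both decompose $Z$ into its irreducible components restricted over the generic point of $Y$, write $[Z]^*\omega$ as a sum of field traces ${\rm Tr}_{L_i/K(Y)}(f_i^*\omega)$, bound each summand using Lemma \ref{lerank}(i), lower semicontinuity (Lemma \ref{lerank}(iii)) and Corollary \ref{corotracetout2711}, and conclude by subadditivity (Lemma \ref{lerank}(iv)) together with $\sum_i N'_i=N'$. The only differences are presentational (you make explicit the discarding of non-dominant components and the $N'\geq 1$ requirement for strictness, which the paper leaves implicit).
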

\begin{proof} We  decompose $Z$ as a sum of irreducible closed algebraic subsets $Z_i$ of $Y\times X$, and restrict over the generic point of $Y$. Then $Z_i$ provides a field extension $K(Y)\subset L_i$ of degree $N'_i$, with $\sum_i N'_i=N'$. We apply  inequality (\ref{eqineqtracelnew2711}) to each of these extensions and get that
\begin{eqnarray}\label{eqineqtracelnew26111514}{\rm rk}_{\rm gen}([Z]^*(\omega))\leq \sum_i  {\rm rk}_{\rm gen}([Z_i]^*(\omega))<
\sum_iN'_i ({\rm rk}_{\rm gen}(\omega) +1). \end{eqnarray}
The first inequality in (\ref{eqineqtracelnew26111514}) is by subadditivity in Lemma \ref{lerank}(iv), and for the second inequality, we use Lemma \ref{lerank}(i), lower-semicontinuity (Lemma \ref{lerank}(iii)) and (\ref{eqineqtracelnew2711}).
As $\sum_iN'_i= N'$, we get (\ref{eqineqtracelnew26111508}).
\end{proof}
We now fix the dimension $n$ of $X$ and observe that by definition, for any $l$-form $\omega$ on $X$,
$${\rm rk}_{\rm gen}(\omega)\leq n.$$
We thus get from (\ref{eqineqtracelnew26111514}) the inequality
\begin{eqnarray}\label{eqineqtracelnew26111533} {\rm rk}_{\rm gen}([Z]^*(\omega))< N'(n+1),
\end{eqnarray}
which is valid for any cycle $Z$ as above.
However, letting $Y:=X^{(N)}_0$, we know (see (\ref{eqgenrankgamman2611})) that
for any $\omega\in H^0(X,\Omega_{X/K}^l)$  with $l\geq 2$, we have
\begin{eqnarray}\label{eqineqtracelnew26111540} {\rm rk}_{\rm gen}(\Gamma_N^*(\omega))=N {\rm rk}_{\rm gen}(\omega) .
\end{eqnarray}
We conclude arguing as  in the previous proofs. As ${\rm rk}_{\rm gen}(\omega)\geq l$ if $\omega\not=0$, equality (\ref{eqtracedeformes}) and (\ref{eqineqtracelnew26111540}), (\ref{eqineqtracelnew26111533}) imply that $\omega=0$ for $l\geq 2$, once $N\geq \frac{N'(n+1)}{l}$.
\end{proof}
\section{Unboundedness of zero-cycles on Fano hypersurfaces of dimension $\geq 3$\label{sectheomain}}
We prove in this section Theorem \ref{theomain}.
The method is by specialization and uses the Totaro arguments in \cite{totaro}, combined with Theorem \ref{theocrit}.  Mori first observed in \cite{mori} that hypersurfaces of even degree  $2d$ specialize to double covers of hypersurfaces of degree $d$ ramified along a general member of $|\mathcal{O}(2d)|$. Koll\'ar specializes these cyclic double covers to characteristic $2$, where they become inseparable and mildly singular. He then proves that starting from dimension $3$,   they have desingularizations with a nonzero algebraic forms on the special fiber. This differential form is the desired obstruction to stable rationality used in \cite{totaro}.  We summarize here the results of \cite{kollar}, \cite{kollarbook} and \cite{totaro}, that will quickly lead us to  the proof of Theorem \ref{theomain}.

Let us recall the following definition  from \cite{CTPi}.
\begin{Defi} \label{defiCTPi} A
 projective morphism
$$\phi: Y\rightarrow X$$ between varieties defined over a field $K$ is universally ${\rm CH}_0$-trivial if for any
field $L \supseteq K$, the induced morphism

$$\phi_*:{\rm CH}_0(Y_{L})\rightarrow {\rm CH}_0(X_L)$$
is an isomorphism.
\end{Defi}
The hypersurfaces $X\subset \mathbb{P}^{n+1}$ we are going to consider are the same as in \cite{totaro}. They were first introduced  by Koll\'ar \cite{kollar}, but Totaro works in a slightly broader range of degrees.

\begin{theo} (Cf. \cite{kollar}, \cite[Section V.5]{kollarbook},  \cite{totaro}.) \label{theototaro} Assume $d$ is even, $n\geq 3$ and $d\geq 2\lceil\frac{n+2}{3}\rceil$. Then there exists a smooth degree-$d$ hypersurface $X\subset \mathbb{P}^{n+1}$ defined over a number field $K_0$, with a specialization $\overline{X}$ defined over a field $\mathbb{F}$ of characteristic $2$ (a finite extension of $\mathbb{F}_2$)  satisfying the following properties

1) The special fiber  $\overline{X}$ admits a desingularization
$\tau:\widetilde{\overline{X}}\rightarrow \overline{X}$ and the morphism $\tau$ is universally ${\rm CH}_0$-trivial.

2) The smooth projective variety $\widetilde{\overline{X}}$ admits a nonzero algebraic form
\begin{eqnarray}\label{eqomegasurXbartilde} 0\not=\omega\in H^0(\widetilde{\overline{X}},\Omega_{\widetilde{\overline{X}}/\mathbb{F}}^{n-1}).
\end{eqnarray}

\end{theo}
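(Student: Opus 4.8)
The statement is a summary of the constructions of Kollár \cite{kollar}, \cite[Section V.5]{kollarbook} and Totaro \cite{totaro}, so the plan is to recall the degeneration and then verify the two properties, rather than to reprove everything from scratch. The starting point is Mori's observation \cite{mori} that an even-degree hypersurface degenerates to a double cover. Writing $d=2m$ and using $z$ for one coordinate of $\mathbb{P}^{n+1}$, I would set up a one-parameter family of degree $d$ hypersurfaces whose general member is smooth and whose special member, after completing the square in characteristic $\neq 2$, is the double cover $w^2=g(x_0,\dots,x_n)$ of $\mathbb{P}^n$ branched along the smooth degree $2m$ hypersurface $\{g=0\}$. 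I would arrange the family over a number field $K_0$ so that the general fiber $X$ is smooth, and then reduce modulo a place of $K_0$ lying above $2$ to obtain the special fiber $\overline{X}$ over a finite field $\mathbb{F}$ of characteristic $2$. Over $\mathbb{F}$ the relation $w^2=g$ becomes purely inseparable, since $w\mapsto w^2$ is the Frobenius, and $\overline{X}$ acquires singularities exactly along the locus where $dg$ degenerates on the branch hypersurface.

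For property 1), the key point is that the singularities of this inseparable double cover in characteristic $2$ are mild and admit an explicit resolution $\tau:\widetilde{\overline{X}}\to\overline{X}$ obtained by blowing up the singular stratum, as analyzed in \cite[Section V.5]{kollarbook}. The exceptional fibers of $\tau$ are rational (projective spaces, or projective bundles over smooth bases), hence have universally trivial $\mathrm{CH}_0$ over every field $L\supseteq\mathbb{F}$. By the fibrewise criterion of Colliot-Th\'el\`ene--Pirutka \cite{CTPi}, a proper morphism all of whose fibers have universally trivial $\mathrm{CH}_0$ is universally $\mathrm{CH}_0$-trivial; this forces $\tau_*:\mathrm{CH}_0(\widetilde{\overline{X}}_L)\to\mathrm{CH}_0(\overline{X}_L)$ to be an isomorphism for all $L$, which is the assertion of property 1).

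For property 2), I would produce the form directly from the inseparable structure. On the cover $\pi:\overline{X}\to\mathbb{P}^n$ in characteristic $2$ one has $\pi^*(dg)=d(\pi^*g)=d(w^2)=0$, so the $1$-form $dg$ cuts out a rank $(n-1)$ distribution on $\mathbb{P}^n$ whose determinant, after the twist permitted by the degree hypothesis, yields a canonical $(n-1)$-form of degree $n-1=\dim X-1$ on the smooth locus of $\overline{X}$. The content is that, pulled back along $\tau$, this expression extends to a genuine regular global section $\omega\in H^0(\widetilde{\overline{X}},\Omega^{n-1}_{\widetilde{\overline{X}}/\mathbb{F}})$ with no poles along the exceptional divisors, and that it is nonzero. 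This is precisely Kollár's computation \cite{kollar}, reworked in the range considered here by Totaro \cite{totaro}.

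The main obstacle is the regularity and nonvanishing in property 2): one must track how the naively defined form transforms under the resolution $\tau$ and check that its order of vanishing along each exceptional divisor is at least the order of the pole it would otherwise acquire. It is exactly this local computation on the resolution that forces the numerical hypothesis $d\ge 2\lceil\frac{n+2}{3}\rceil$, since the degree bound guarantees that the branch hypersurface $\{g=0\}$ is of high enough degree for the differential to carry the requisite vanishing, so that after resolving the singular stratum the form is regular rather than merely rational. By contrast, property 1) is comparatively routine once the resolution of \cite{kollarbook} is in hand, as it reduces to recognizing the exceptional fibers as $\mathrm{CH}_0$-trivial and invoking \cite{CTPi}.
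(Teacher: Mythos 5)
Your overall plan---Mori's degeneration, specialization to characteristic $2$ where the cover becomes inseparable, Koll\'ar's resolution plus the fibrewise criterion of \cite{CTPi} for property 1), and Koll\'ar's differential-form construction for property 2)---is indeed the route of \cite{kollar}, \cite{kollarbook}, \cite{totaro}, and the paper itself gives no proof of Theorem \ref{theototaro} beyond these citations. However, you misidentify the degeneration, and this is not a cosmetic slip: it breaks the numerics of the statement. Mori's degeneration sends a hypersurface of even degree $d=2a$ in $\mathbb{P}^{n+1}$ to a double cover $\pi:\overline{X}\rightarrow Z$ of a degree $a$ hypersurface $Z\subset \mathbb{P}^{n+1}$, branched along the intersection of $Z$ with a degree $2a$ hypersurface---this is visible in the paper's own proof of Claim \ref{claim1forms}, where for quartic threefolds $\overline{X}$ is a double cover of a quadric $Q\subset\mathbb{P}^4$ branched in $|\mathcal{O}_Q(4)|$---and \emph{not} a double cover of $\mathbb{P}^n$ branched in degree $d$. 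The consequence is quantitative: Koll\'ar's $(n-1)$-forms are sections of (the extension over the resolution of) $\pi^*(\omega_Z\otimes\mathcal{O}_Z(2a))=\pi^*\mathcal{O}_Z(3a-n-2)$, so nonzero forms exist precisely when $3a\geq n+2$, i.e. $d\geq 2\lceil\frac{n+2}{3}\rceil$ for even $d$, which is the theorem's hypothesis. With your base $\mathbb{P}^n$ and branch degree $d$, the relevant line bundle is $\omega_{\mathbb{P}^n}\otimes\mathcal{O}(d)=\mathcal{O}(d-n-1)$, forcing $d\geq n+1$, strictly stronger for $n\geq 4$. Concretely, for quartic fourfolds ($n=4$, $d=4$, allowed by the theorem and used later in the paper in the proof of Theorem \ref{theovrai}) your bundle is $\mathcal{O}(-1)$ and your construction produces no form at all; your setup is really that of Remark \ref{remadouble} (double covers of $\mathbb{P}^n$), which accordingly carries the stronger hypothesis on the branch degree. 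Note also that the flat limit of a family of hypersurfaces in $\mathbb{P}^{n+1}$ cannot literally be a weighted-projective double cover: Mori obtains the double cover as the special fibre only after a base change and normalization of the family, a step your ``complete the square'' description skips.

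Two further inaccuracies, less serious. First, in characteristic $2$ there is no branch hypersurface (the cover is purely inseparable, hence a homeomorphism); the singular locus of $\overline{X}$ is the finite set of zeros of the section $ds\in H^0(Z,\Omega_{Z/\mathbb{F}}(2a))$, as recalled in the proof of Claim \ref{claim1forms}, not a degeneracy locus ``on the branch hypersurface''. Second, you attribute the degree hypothesis to pole orders along the exceptional divisors of $\tau$; in fact it enters through the global effectivity of $\omega_Z\otimes\mathcal{O}_Z(2a)$ described above, while Koll\'ar's extension of these forms across his resolution is a local statement requiring no additional numerical input. Your account of property 1)---the exceptional fibres are universally ${\rm CH}_0$-trivial, then apply the fibrewise criterion of \cite{CTPi}---is the correct outline and matches \cite{totaro}.
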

By specialization, we mean as usual that there is a flat projective morphism $\mathcal{X}\rightarrow {\rm Spec}\,R$, where $R$ is  d.v.r.  (in our case, a localization of a ring of  integers), whose special fiber is isomorphic to $\overline{X}$, and generic fiber is isomorphic to $X$.
\begin{rema}\label{remadouble} (Cf. \cite{CTPiIzvestya}.) {\rm The same result holds for cyclic double covers of $\mathbb{P}^n$, $n\geq 3$, ramified along a hypersurface of degree $2d$ with $2d\geq n+1$. Condition (2) above is proved in
\cite[Proposition 5.10]{kollarbook}.}
\end{rema}

\begin{proof}[Proof of Theorem \ref{theomain}]
Note that the conditions on $n$ and $d$ are the same in Theorem \ref{theomain} and Theorem \ref{theototaro}. Let $\mathcal{X}^0\rightarrow B^0$ be the universal smooth hypersurface. Using Lemma \ref{lemmanew16juillet}, one proves by standard arguments playing on the countability of the relative Chow varieties of $(\mathcal{X}^0)^{(N)/B}\times_{B^0} \mathcal{X}^0$ (see \cite{voisininvent}), that  the set of smooth complex hypersurfaces of degree $d$ and dimension $n$ having  bounded ${\rm CH}_0$  is a  countable union of closed algebraic subsets in the parameter space $B^0$, so in order to prove that the very general one has unbounded ${\rm CH}_0$, it suffices to prove that there exists one smooth hypersurface of degree $d$ and dimension $n$ with unbounded ${\rm CH}_0$.

 Let $X$ be as in Theorem  \ref{theototaro}. We  prove that ${\rm CH}_0(X)$ is not bounded. Assume by contradiction that for some integer $N'$, and for any overfield $L\supseteq K_0$, any $0$-cycle $z\in{\rm CH}_0(X_L)$ of degree $\geq N'$ is effective. For any integer $l\geq 0$, we take for $L$ the field $L_{N}=K_0(X^{(N)})$, $N:=N'+dk$. Then, as in paragraph \ref{secgenpointsNintro}, $X_{L_N}$ has  the  closed point $\gamma_{N}$, which is of degree $N$.
Recall the notation $h_X:=c_1(\mathcal{O}_X(1))^n\in {\rm CH}_0(X)$. We have ${\rm deg}\,h_X=d$, so
${\rm deg}\,(\gamma_{L_N}-kh_X)=N'$, and  our hypothesis implies that $\gamma_{L_N}-kh_X$ is effective of degree $N'$. This implies that there exists an effective  codimension $n$ cycle
$$T\subset X^{(N)}_0\times X$$
of degree $N'$ over $X^{(N)}_0$, such that for some dense Zariski open set $U\subset X^{(N)}_0$
\begin{eqnarray}\label{eqnpourrelzerocycleNd}
T_{\mid U\times X}=\Gamma_{N\mid U\times X}- k\, U\times h_X\,\,{\rm in}\,\,{\rm CH}^n(U\times X).
\end{eqnarray}
We now specialize these cycles to $\overline{X}$ and get an {\it effective}  specialized cycle $$\overline{T}\subset \overline{X}^{(N)}\times \overline{X}$$ and a dense Zariski open set $\overline{U}\subset  \overline{X}^{(N)}_0$ satisfying
\begin{eqnarray}\label{eqnpourrelzerocycleNdbar}
\overline{T}_{\mid \overline{U}\times \overline{X}}=\Gamma_{N\mid \overline{U}\times \overline{X}}- k\,\overline{U}\times h_{\overline{X}}\,\,{\rm in}\,\,{\rm CH}^n(\overline{U}\times \overline{X}).
\end{eqnarray}

 We observe next that a lift $\widetilde{\overline{T}}\subset \widetilde{\overline{X}}^{(N')}_0\times \widetilde{\overline{X}}$  of  the cycle $\overline{T}$  exists  as an {\it effective} cycle, at least over a dense  Zariski open set $U'\subset \widetilde{\overline{X}}^{(N')}_0$. Indeed the map $\tau$ is an isomorphism away from the (isolated) singular points of  $\widetilde{\overline{X}}$, so it suffices to choose the field $\mathbb{F}$ in such a way that there exists a rational point over each singular point of $\widetilde{\overline{X}}$. Thanks to the ${\rm CH}_0$ universal triviality of the desingularization map
$\tau$, stated in Theorem \ref{theototaro}(1), the effective cycle $\widetilde{\overline{T}}$  satisfies (maybe after shrinking $U'$) the same relation as in (\ref{eqnpourrelzerocycleNdbar}):
\begin{eqnarray}\label{eqnpourrelzerocycleNdbartilde}
\widetilde{\overline{T}}_{\mid U'\times \widetilde{\overline{X}}}=\Gamma_{N\mid U'\times \widetilde{\overline{X}}}- k \, U'\times h_{\widetilde{\overline{X}}}\,\,{\rm in}\,\,{\rm CH}^n(U'\times \widetilde{\overline{X}}).
\end{eqnarray}

As  $N=N'+dk$ can be taken arbitrarily large while $N'$ is fixed, and $\widetilde{\overline{X}}$ has a nonzero $n-1$-form  by Theorem \ref{theototaro}(2),  we get a contradiction with Theorem \ref{theocrit}, \ref{itsit4}, since $n\geq3$.
\end{proof}

The proof given above establishes more generally the following statement.

\begin{prop}\label{newpropdu16juillet} Let $\pi: X\rightarrow {\rm Spec}(R)$ be a flat projective morphism, where $R$ is a d.v.r with residue field $k_0$. Assume that the special fiber $\overline{X}$  admits a desingularization $\tau: \widetilde{\overline{X}}\rightarrow \overline{X}$ such that the following properties hold:
\begin{enumerate}
\item $\tau$ is universally ${\rm CH}_0$ trivial.
\item \label{item2du18jui} $\tau$ induces   a surjection on points over any field containing $k_0$.
\item  One has $H^0(\widetilde{\overline{X}},\Omega^l_{\widetilde{\overline{X}}/k_0})\not=0$ for some $l\geq 2$.
\end{enumerate}
Then the generic geometric fiber of $\pi$ has unbounded ${\rm CH}_0$.
\end{prop}
\begin{rema} {\rm Property  \ref{item2du18jui} can always be achieved after extending the field $k_0$ if $\widetilde{\overline{X}}$ has isolated singularities.}
\end{rema}
\subsection{Proof of Theorem \ref{theovrai}\label{sectheovrai}}

We focus in this section on the case of quartic threefolds and prove Theorem \ref{theovrai}. The case of quartic or sextic double solids  works exactly in the same way, using Remarks \ref{remadouble} and  \ref{rema2pourdouble}, \ref{rema3pourdouble} below. The cases of quartic fourfolds or  double covers of $\mathbb{P}^4$ ramified along a sextic or a octic hypersurface  also work well with a small  extra work needed to establish the analog of Lemma \ref{pro2form}.

In order to prove Theorem \ref{theovrai}, we will need the following two extra ingredients.
  Over any field $k$, let  $B=\mathbb{P}(H^0(\mathbb{P}^{n+1}_k,\mathcal{O}_{\mathbb{P}^{n+1}_k}(d)))$ and let \begin{eqnarray}\label{eqquarticuniv} \mathcal{X}\subset B\times \mathbb{P}^{n+1}_k\end{eqnarray} be the universal hypersurface, with morphism $\pi={\rm pr}_1: \mathcal{X}\rightarrow B$ and generic fiber
$X_{\eta}$ over $K:=k(B)$. As before,  for any hypersurface $X\subset \mathbb{P}^{n+1}$, we denote by $h_X\in{\rm CH}_0(X)$ the $0$-cycle $c_1(\mathcal{O}_X(1))^n$.
\begin{lemm}\label{prodiag} One has ${\rm CH}_0(X_{\eta})=\mathbb{Z} h_{X_{\eta}}$ and the image of ${\rm CH}^n(X_{\eta}\times X_{\eta})$ in $$\underset{{U}\underset{{\rm open}}{\subset} X_{\eta}}{\underrightarrow{\rm lim}}{\rm CH}^n(U\times X_{\eta})$$ is generated by $\Delta_{X_{\eta}}$ and ${\rm pr}_2^* h_{X_\eta}$.
\end{lemm}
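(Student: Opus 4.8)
The plan is to reduce both assertions to the explicit Chow ring of a projective bundle. The key observation is that the projection $\mathrm{pr}_2\colon\mathcal{X}\to\mathbb{P}^{n+1}_k$ realizes $\mathcal{X}$ as a projective bundle: the fibre over $x$ is the linear system $\{f\mid f(x)=0\}\subset B$, so $\mathcal{X}=\mathbb{P}(E)$ with $E=\ker(\mathrm{ev}\colon H^0(\mathbb{P}^{n+1},\mathcal{O}(d))\otimes\mathcal{O}\to\mathcal{O}(d))$. By the projective bundle formula $\mathrm{CH}^{*}(\mathcal{X})$ is the free $\mathrm{CH}^{*}(\mathbb{P}^{n+1})=\mathbb{Z}[h]/(h^{n+2})$-module on $1,\xi,\dots,\xi^{N-1}$, where $h:=\mathrm{pr}_2^{*}c_1(\mathcal{O}(1))$ and $\xi:=\mathrm{pr}_1^{*}h_B$ restricted to $\mathcal{X}$; in particular $\mathrm{CH}^{n}(\mathcal{X})$ is generated by the monomials $h^{i}\xi^{j}$ with $i+j=n$.

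First I would deduce the statement on $\mathrm{CH}_0(X_\eta)$. Localization over $B$ identifies $\mathrm{CH}_0(X_\eta)$ with $\varinjlim_{U\subset B}\mathrm{CH}^{n}(\pi^{-1}(U))$, and each restriction $\mathrm{CH}^{n}(\mathcal{X})\to\mathrm{CH}^{n}(\pi^{-1}(U))$ is surjective by right-exactness of the localization sequence, so $\mathrm{CH}_0(X_\eta)$ is a quotient of $\mathrm{CH}^{n}(\mathcal{X})$. The class $\xi=\mathrm{pr}_1^{*}h_B$ is pulled back from $B$, hence represented by $\pi^{-1}(H)$ for a hyperplane $H\subset B$ and dies in the colimit; therefore every monomial with $j>0$ maps to $0$ and only $h^{n}$ survives, restricting to $h_{X_\eta}$. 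Since $\deg h_{X_\eta}=d\neq 0$ the class $h_{X_\eta}$ has infinite order, whence $\mathrm{CH}_0(X_\eta)=\mathbb{Z}h_{X_\eta}$.

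For the second assertion I would run the same machine on $\mathcal{X}\times_B\mathcal{X}$, whose generic fibre over $B$ is $X_\eta\times X_\eta$. Writing $p_1$ for the first projection, the colimit $\varinjlim_{U\subset X_\eta}\mathrm{CH}^{n}(U\times X_\eta)$ is computed by restricting $\mathcal{X}\times_B\mathcal{X}$ to the generic fibre of $p_1$, onto which $\mathrm{CH}^{n}(\mathcal{X}\times_B\mathcal{X})$ surjects, and it coincides with the image of $\mathrm{CH}^{n}(X_\eta\times X_\eta)$. Now consider $q\colon\mathcal{X}\times_B\mathcal{X}\to P:=\mathbb{P}^{n+1}\times\mathbb{P}^{n+1}$, $(b,x,y)\mapsto(x,y)$: over $P\setminus\Delta_P$ the conditions $f(x)=f(y)=0$ are independent, so $q$ is a $\mathbb{P}^{N-2}$-bundle $\mathbb{P}(E_2)$, while $W:=q^{-1}(\Delta_P)$ is exactly the relative diagonal $\Delta_{\mathcal{X}/B}\cong\mathcal{X}$, a codimension $n$ subvariety with generic fibre $\Delta_{X_\eta}$ over $B$. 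As $\Delta_P$ has codimension $n+1$, we get $\mathrm{CH}^{n}(P\setminus\Delta_P)=\mathrm{CH}^{n}(P)=\bigoplus_{i+j=n}\mathbb{Z}h_1^{i}h_2^{j}$, and the projective bundle formula presents $\mathrm{CH}^{n}(\mathbb{P}(E_2))$ via the monomials $\zeta^{a}h_1^{i}h_2^{j}$ ($a+i+j=n$), with $\zeta=\mathrm{pr}_B^{*}h_B$ the relative hyperplane class.

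Finally I would feed this into the localization sequence $\mathbb{Z}[W]\to\mathrm{CH}^{n}(\mathcal{X}\times_B\mathcal{X})\to\mathrm{CH}^{n}(\mathbb{P}(E_2))\to 0$ and restrict to the generic fibre of $p_1$. In that colimit every class pulled back through $p_1$ restricts to $0$; since both $\zeta=\mathrm{pr}_B^{*}h_B=p_1^{*}(\pi^{*}h_B)$ and $h_1=p_1^{*}h_{\mathcal{X}}$ factor through $p_1$, all open-part generators vanish except $h_2^{n}=p_2^{*}h_{\mathcal{X}}^{\,n}$, which (lifting across the surjection by the globally defined class $h_2^{n}$) restricts to $\mathrm{pr}_2^{*}h_X$, while the boundary generator $[W]$ restricts to $\Delta_{X_\eta}$. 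Hence the image of $\mathrm{CH}^{n}(X_\eta\times X_\eta)$ in the colimit is generated by $\Delta_{X_\eta}$ and $\mathrm{pr}_2^{*}h_X$. I expect the only delicate point to be the bookkeeping forced by the jump of the fibre dimension of $q$ along $\Delta_P$: one must verify that the excess locus is precisely the relative diagonal and that the dimension counts make each restriction in the two nested colimits surjective, so that no class is lost or spuriously created.
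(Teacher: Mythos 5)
Your proposal is correct and follows essentially the same route as the paper: realize $\mathcal{X}$ as a projective bundle over $\mathbb{P}^{n+1}$ via ${\rm pr}_2$ for the first assertion, and for the second, map $\mathcal{Y}=\mathcal{X}\times_B\mathcal{X}$ to $\mathbb{P}^{n+1}\times\mathbb{P}^{n+1}$, use that it is a projective bundle off the preimage of the diagonal (which is exactly the relative diagonal $\Delta_{\mathcal{X}}$), and invoke the localization sequence before killing, in the colimit, the classes pulled back from $B$ and from the first factor. The "delicate point" you flag (the fibre-dimension jump along $\Delta_P$) is handled in the paper precisely as you propose, by excising the relative diagonal and adding its class back as the generator $\Delta_{X_\eta}$ via localization.
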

\begin{proof} This statement appears in several places (see for example \cite{voisinkyoto}). We give a proof for completeness. The variety $\mathcal{X}$ admits the morphism
$$ f:={\rm pr}_2:\mathcal{X}\rightarrow \mathbb{P}^{n+1}.$$
The fiber of $f$ over any point $x$ of $ \mathbb{P}^{n+1}$ is the hyperplane $B_x=\mathbb{P}(H^0(\mathbb{P}^{n+1},\mathcal{I}_{x}(d)))\subset B$. Hence $\mathcal{X}$ is a projective bundle over $\mathbb{P}^{n+1}$ and it follows that ${\rm CH}^*(\mathcal{X})$ is generated as a ring  by $h_1:={\rm pr}_1^*c_1(\mathcal{O}_B(1))$ and $h_2:=f^*c_1(\mathcal{O}_{\mathbb{P}^{n+1}}(1))$. The restriction map
$${\rm CH}(\mathcal{X})\rightarrow  {\rm CH}(X_{\eta})$$
is surjective and annihilates $h_1^i,\,i>0$. It follows that ${\rm CH}(X_{\eta})$ is generated as a ring by  the restriction of $f^*h_2$, which proves the first statement  since $h_{X_{\eta}}=(h_2^n)_{\mid X_{\eta}}$.

For the second statement, we use a similar argument except that we consider now
\begin{eqnarray} \label{eqpourY}\mathcal{Y}:=\mathcal{X}\times_{B}\mathcal{X}.\end{eqnarray}
We  have  a surjection given by restriction over the generic point of $B$
$${\rm CH}(\mathcal{Y})\rightarrow {\rm CH}(X_{\eta}\times X_{\eta}).$$
As above, we consider now the morphism
$$g:=(f_1,f_2):\mathcal{Y}\rightarrow\mathbb{P}^{n+1}\times \mathbb{P}^{n+1}.$$
Above a point  $(x,y)$ of $ \mathbb{P}^{n+1}\times \mathbb{P}^{n+1}$ with $x\not=y$, the fiber of $g$ is a projective subspace $B_{x,y}\subset B$ of codimension $2$. (Above a point $(x,x)$ of $ \mathbb{P}^{n+1}\times \mathbb{P}^{n+1}$, the fiber of $g$ is the projective subspace $B_{x}\subset B$ of codimension $1$, so $g$ is a projective bundle only over $\mathbb{P}^{n+1}\times \mathbb{P}^{n+1}\setminus \Delta_{\mathbb{P}^{n+1}}$.)
Let $\Delta_{\mathcal{X}}\subset \mathcal{Y}$ be the diagonal. We have
$$\Delta_{\mathcal{X}}=g^{-1}(\Delta_{ \mathbb{P}^{n+1}}).$$
Denoting $\pi_2:\mathcal{Y}\rightarrow B$ the natural map, we see from the above analysis that ${\rm CH}(\mathcal{Y}\setminus \Delta_{\mathcal{X}}) $ is generated as a ring by
$\alpha={\rm \pi}_2^* c_1(\mathcal{O}_B(1))$,  $\beta=f_1^*c_1(\mathcal{O}_{\mathbb{P}^{n+1}}(1))$ and $\gamma=f_2^*c_1(\mathcal{O}_{\mathbb{P}^{n+1}}(1))$. It follows from the localization exact sequence that ${\rm CH}^n(\mathcal{Y})$ is generated as an abelian group by the class of $\Delta_{\mathcal{X}}$ and the monomials of degree $n$ in $\alpha,\,\beta,\,\gamma$.
When we restrict to $X_{\eta}\times X_{\eta}$, the powers $\alpha^i$  vanish for $i>0$. Similarly, when we restrict to
$U\times X_{\eta}$, where $U\subset X_{\eta}$ is a sufficiently small Zariski open subset, all powers $\beta^i$  vanish for $i>0$. Hence we conclude that for such an open set $U$,  ${\rm CH}^n(U\times X_{\eta})$ is generated as an abelian group by $\gamma^n$ and $\Delta_{X_{\eta}}$.
\end{proof}
\begin{rema}\label{rema2pourdouble}{\rm The same statement holds for the generic double cover of $\mathbb{P}^n$ ramified along a degree $2d$ hypersurface, and is proved similarly, as they  can be defined as a hypersurface in the total space of $\mathcal{O}(d)$   on $\mathbb{P}^n$.}
\end{rema}
Let $Y=\widetilde{\overline{X}}$ be the desingularization of a Koll\'ar-Mori specialization of a quartic threefold over  a field $\mathbb{F}$ of  characteristic $2$ considered in Theorem \ref{theototaro}. Recall that we denote by $Y^{(N)}_0$ the open set of the symmetric product $Y^{(N)}$ that parameterizes sets of $N$ distinct points (or smooth $0$-dimensional subschemes of length $N$) in $Y$.
\begin{lemm} \label{pro2form} For any $N\geq 1$, one has an isomorphism
$$H^0(Y^{(N)}_0,\Omega_{Y^{(N)}_0/\mathbb{F}}^2)\cong H^0(Y,\Omega_{Y/\mathbb{F}}^2)$$
given by the map $[\Gamma_N]^*$.
\end{lemm}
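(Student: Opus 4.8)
The plan is to compute $H^0(Y^{(N)}_0,\Omega^2_{Y^{(N)}_0/\mathbb{F}})$ by transporting the question to the ordinary product $Y^N$, and then to match the resulting identification with the map $[\Gamma_N]^*$ of (\ref{eqvanGammaUdolb}), which by Remark \ref{rematrace} is computed as a trace. First I would use that $q:Y^N_0\to Y^{(N)}_0$ is finite étale with $S_N$ acting freely (the $N$ points being distinct), a property valid in any characteristic, so that $q^*\Omega^2_{Y^{(N)}_0}=\Omega^2_{Y^N_0}$ and $H^0(Y^{(N)}_0,\Omega^2)=H^0(Y^N_0,\Omega^2)^{S_N}$. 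Since $Y$ has dimension $3$, the big diagonals $\{y_i=y_j\}$ have codimension $3\geq 2$ in the smooth variety $Y^N$; as $\Omega^2_{Y^N}$ is locally free, sections extend across them (algebraic Hartogs), whence $H^0(Y^N_0,\Omega^2)=H^0(Y^N,\Omega^2)$.

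Next I would apply the Künneth decomposition. Using that $Y$ is smooth projective and geometrically irreducible, so $H^0(Y,\mathcal{O}_Y)=\mathbb{F}$, one gets
\[ H^0(Y^N,\Omega^2_{Y^N})=\bigoplus_{i}\mathrm{pr}_i^*H^0(Y,\Omega^2_Y)\ \oplus\ \bigoplus_{i<j}\mathrm{pr}_i^*H^0(Y,\Omega^1_Y)\wedge\mathrm{pr}_j^*H^0(Y,\Omega^1_Y). \]
Taking $S_N$-invariants, the first summand contributes the single diagonal copy $\{\sum_i\mathrm{pr}_i^*\alpha\}\cong H^0(Y,\Omega^2_Y)$, while a transposition $(ij)$ acts on the pair $(i,j)$ by $v\otimes w\mapsto -w\otimes v$, so the second summand contributes exactly one copy of $\bigwedge^2 H^0(Y,\Omega^1_Y)$. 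Hence $H^0(Y^{(N)}_0,\Omega^2)\cong H^0(Y,\Omega^2_Y)\oplus\bigwedge^2 H^0(Y,\Omega^1_Y)$. By (\ref{eqtireU}) the image of $[\Gamma_N]^*$ is precisely the diagonal summand, and since distinct summands of the Künneth decomposition are independent, $[\Gamma_N]^*$ is injective (with no spurious factor of $N$, as the copies of $\alpha$ sit in different summands). Therefore $[\Gamma_N]^*$ is an isomorphism as soon as $\bigwedge^2 H^0(Y,\Omega^1_Y)=0$, that is, as soon as $h^0(Y,\Omega^1_Y)\leq 1$.

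The hard part will be the geometric input $H^0(Y,\Omega^1_Y)=0$. I stress that this cannot be obtained formally: semicontinuity in the family $\mathcal{X}\to\mathrm{Spec}\,R$ goes the wrong way (special $\geq$ generic), the vanishing $H^1(Y,\mathcal{O}_Y)=0$ does not suffice because Hodge symmetry fails in characteristic $2$ and non-closed global $1$-forms could a priori survive even when $H^1_{\mathrm{dR}}$ vanishes, and unirationality is no help since the unirationality of $Y$ is only inseparable (separable rational connectedness is excluded precisely by $H^0(Y,\Omega^2_Y)\neq 0$). I would instead derive it from the explicit Kollár–Totaro model: on the inseparable double cover $\overline{X}$ one writes $\Omega^1$ in terms of the pulled-back forms from $\mathbb{P}^3$ together with the extra generator $dz$ subject to the characteristic-$2$ relation $df=0$, and checks on the resolution $Y$ that no such combination is everywhere regular, invoking birational invariance of $H^0(\Omega^1)$ to pass to a convenient smooth model.

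Granting $H^0(Y,\Omega^1_Y)=0$, the cross term $\bigwedge^2 H^0(Y,\Omega^1_Y)$ vanishes and the computation collapses to $H^0(Y^{(N)}_0,\Omega^2)\cong H^0(Y,\Omega^2_Y)$, realized by the isomorphism $[\Gamma_N]^*$ whose inverse is the trace map of Remark \ref{rematrace}, as asserted in the statement.
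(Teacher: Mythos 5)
Your formal reduction is sound and is in fact the paper's own route: pull back along the \'etale quotient $q:Y^N_0\to Y^{(N)}_0$, extend across the big diagonals (codimension $3$) by Hartogs, apply K\"unneth on $Y^N$, and take $\mathfrak{S}_N$-invariants, the diagonal summand being realized by the trace. One correction on this part: your claim that the invariant cross terms form $\bigwedge^2H^0(Y,\Omega^1_Y)$, so that $h^0(Y,\Omega^1_Y)\leq 1$ would suffice, fails precisely in characteristic $2$. There the transposition acts by $\alpha\otimes\beta\mapsto -\beta\otimes\alpha=\beta\otimes\alpha$, so the invariants are the \emph{symmetric} tensors, and if $h^0(Y,\Omega^1_Y)=1$ with generator $\alpha$ then $\sum_{i<j}{\rm pr}_i^*\alpha\wedge{\rm pr}_j^*\alpha$ is a nonzero invariant $2$-form descending to $Y^{(N)}_0$. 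Since $\mathbb{F}$ has characteristic $2$, you genuinely need the full vanishing $H^0(Y,\Omega^1_{Y/\mathbb{F}})=0$, nothing weaker; this is harmless only because that is what you set out to prove.

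The genuine gap is that this vanishing --- which is the entire content of the lemma, isolated in the paper as Claim \ref{claim1forms} --- is only gestured at: ``checks on the resolution $Y$ that no such combination is everywhere regular'' is the statement to be proved, not a proof, and working on the resolution would require controlling the exceptional locus explicitly. Your model is also slightly off: for the Koll\'ar specialization of a quartic \emph{threefold}, $\overline{X}$ is an inseparable double cover $r:\overline{X}\to Q$ of a quadric $Q\subset\mathbb{P}^4$ ramified in $|\mathcal{O}_Q(4)|$; the base is $\mathbb{P}^3$ only in the double-solid case (Remark \ref{rema3pourdouble}). The paper's argument avoids both the resolution and birational invariance: since $\tau$ is an isomorphism over the smooth locus $\overline{X}_0$, restriction of forms reduces the problem to $H^0(\overline{X}_0,\Omega_{\overline{X}_0/\mathbb{F}})=0$; Koll\'ar's exact sequence (\ref{eqexcatkollar}), $0\to r^*\mathcal{F}\to\Omega_{\overline{X}_0/\mathbb{F}}\to r^*\mathcal{O}_{Q_0}(-2)\to 0$ with $\mathcal{F}={\rm Coker}(ds:\mathcal{O}_{Q_0}(-4)\to\Omega_{Q_0/\mathbb{F}})$, together with $H^0(\overline{X}_0,r^*\mathcal{O}_{Q_0}(-2))=0$, reduces this to $H^0(Q_0,\mathcal{F})=0$, which in turn follows from $H^0(Q_0,\Omega_{Q_0/\mathbb{F}})=0$ and $H^1(Q_0,\mathcal{O}_{Q_0}(-4))=H^1(Q,\mathcal{O}_Q(-4))=0$, the last equality holding because $Q\setminus Q_0$ has codimension $3$. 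Your sketch could be completed along exactly these lines, but as written the decisive step is missing.
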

\begin{proof} The pull-back to $Y^{N}_0$ of $2$-forms on $Y^{(N)}_0$ is injective and maps $H^0(Y^{(N)}_0,\Omega_{Y^{(N)}_0/\mathbb{F}}^2)$ to the space of invariant $2$-forms under the symmetric group action. We now prove the following
\begin{claim}\label{claim1forms} The threefold $Y$ being as above, we have  $H^0(Y,\Omega_{Y/\mathbb{F}})=0$.
\end{claim}
\begin{proof} The variety $Y=\widetilde{\overline{X}}$ is a desingularization of the variety $\overline{X}$, obtained as  a cyclic double cover $r: \overline{X}\rightarrow Q$ of a quadric $Q\subset \mathbb{P}^4$ ramified along a smooth surface $S\subset Q$ which is a member of the linear system $|\mathcal{O}_Q(4)|$ defined by a section $s\in H^0(Q,\mathcal{O}_Q(4))$.
The variety $\overline{X}$ has isolated singular points over the vanishing locus $V(ds)$ of the section $ds\in H^0(Q,\Omega_{Q/\mathbb{F}}(4))$ constructed in \cite[Definition-Lemma V.5.4]{kollarbook}. It clearly suffices to show that
$$H^0(\overline{X}_0,\Omega_{\overline{X}_0/\mathbb{F}})=0,$$
where $ \overline{X}_0$ is the smooth locus of $\overline{X}$.
 We now restrict to
$ \overline{X}_0$ the exact sequence  established in \cite[Lemma V.5.5]{kollarbook} using  the characteristic $2$ assumption,  and get
\begin{eqnarray}\label{eqexcatkollar} 0\rightarrow r^*\mathcal{F}\rightarrow \Omega_{\overline{X}_0/\mathbb{F}}\rightarrow r^*\mathcal{O}_{Q_0}(-2)\rightarrow 0,
\end{eqnarray}
where $Q_0:=Q\setminus V(ds)$ and $\mathcal{F}$ is the rank $2$ vector bundle on $Q_0$ defined as
\begin{eqnarray}\label{eqdefF} \mathcal{F}:={\rm Coker}(ds: \mathcal{O}_{Q_0}(-4)\rightarrow\Omega_{Q_0/\mathbb{F}}).\end{eqnarray}
We have $H^0(\overline{X}_0,r^*\mathcal{O}_{Q_0}(-2))=0$, so by (\ref{eqexcatkollar})  we only have to prove that $H^0(\overline{X}_0,r^*\mathcal{F})=0$, which is clearly equivalent to $H^0(Q_0,\mathcal{F})=0$.
By Definition (\ref{eqdefF}) of $\mathcal{F}$, we have an exact sequence on $Q_0$
\begin{eqnarray}\label{eqexcatkollarF} 0\rightarrow \mathcal{O}_{Q_0}(-4)\rightarrow \Omega_{Q_0/\mathbb{F}}\rightarrow \mathcal{F}\rightarrow 0.
\end{eqnarray}
As $Q\setminus Q_0$ has codimension $3$ in $Q$, we have $H^1(Q_0,\mathcal{O}_{Q_0}(-4))=H^1(Q,\mathcal{O}_{Q}(-4))=0$. Thus $H^0(Q_0,\mathcal{F})=0$ follows from $H^0(Q_0,\Omega_{Q_0/\mathbb{F}})=0$.
\end{proof}
\begin{rema}\label{rema3pourdouble}{\rm The same statement holds for the double covers of $\mathbb{P}^3$,  replacing in the proof above the quadric $Q$ by $\mathbb{P}^n$.}
\end{rema}
Claim \ref{claim1forms} implies  that
$$H^0(Y^{N}_0,\Omega_{Y^{N}_0/\mathbb{F}}^2)=\bigoplus_{i}{\rm pr}_i^*H^0(Y,\Omega_{Y/\mathbb{F}}^2),$$
from which Lemma \ref{pro2form} follows by taking invariants under the symmetric group $\mathfrak{S}_N$.
\end{proof}
\begin{proof}[Proof of Theorem \ref{theovrai}]  The field $K$ and the quartic hypersurface $X$ are defined as follows. Working over any field of characteristic $0$, for example $\mathbb{Q}$ (or $\mathbb{C}$), let as before
$B=\mathbb{P}(H^0(\mathbb{P}^4,\mathcal{O}_{\mathbb{P}^4}(4)))$ and $ \mathcal{X}\rightarrow B$
 be the universal quartic hypersurface. The quartic hypersurface we consider is the generic fiber $X_\eta$ of the fibration $ \mathcal{X}\rightarrow B$. For any integer $N>0$, let $\mathcal{X}^{(N)/B}$ be the relative $N$-th symmetric product. The field $L_N$ is defined  as
\begin{eqnarray}\label{eqpourK} L_N:=\mathbb{Q}(\mathcal{X}^{(N)/B}).
\end{eqnarray}
The quartic $X_{L_N}$ is the generic fiber of the morphism
$${\rm pr}_1: \mathcal{X}^{(N)/B}\times_B\mathcal{X}\rightarrow \mathcal{X}^{(N)/B}.$$
The universal cycle
$$\Gamma_N\subset \mathcal{X}^{(N)/B}\times_B\mathcal{X}$$
provides as in paragraph \ref{secgenpointsNintro} a closed point $\gamma_{N}$ of $X_{L_N}$, which is of degree $N$.
Theorem \ref{theovrai} follows now from Proposition \ref{profinal} proved below.
\end{proof}
\begin{prop} \label{profinal} Assume that $N\geq 7$ is odd. Then the quartic threefold $X_{L_N}$ has no closed point of odd degree $M<N$.
\end{prop}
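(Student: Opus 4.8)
The plan is to argue by contradiction and specialization, exactly paralleling the structure of the proof of Theorem \ref{theomain} but now tracking the parity of degrees. Suppose $X_L$ had a closed point of odd degree $M < N$. This point is given (over a Zariski open set of $\mathcal{X}^{(N/B)}$) by an effective cycle $W \subset \mathcal{X}^{(N/B)} \times_B \mathcal{X}$ of relative degree $M$ over the base. By Lemma \ref{prodiag}, over the generic point of the base $B$ the group $\mathrm{CH}_0(X_\eta) = \mathbb{Z} h_{X_\eta}$ with $\deg h_{X_\eta} = 4$, and the relevant correspondences in $\mathrm{CH}^n(U \times X_\eta)$ are generated by $\Delta_{X_\eta}$ and $\mathrm{pr}_2^* h_{X_\eta}$. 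The point of degree $M$ therefore satisfies, over a dense open $U$, a relation of the form
\begin{eqnarray}\label{eqplanrel}
W_{\mid U \times X} = \lambda\, \Gamma_{N \mid U \times X} + \mu\, U \times h_X \,\,{\rm in}\,\,\mathrm{CH}^n(U \times X)
\end{eqnarray}
for integers $\lambda, \mu$, where taking degrees over the second factor forces $\lambda \cdot N + 4\mu = M$, hence $\lambda N \equiv M \pmod 2$; since $M$ is odd, $\lambda$ and $N$ cannot both be even, and in fact $\lambda N$ is odd so $\lambda$ is odd and $N$ is odd.

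First I would specialize this entire configuration to the Kollár–Mori special fiber $\overline{X}$ over a field $\mathbb{F}$ of characteristic $2$, and then lift along the universally $\mathrm{CH}_0$-trivial desingularization $\tau : Y = \widetilde{\overline{X}} \to \overline{X}$ furnished by Theorem \ref{theototaro}(1), obtaining a specialized relation of the same shape on $Y$. The crucial new input is Lemma \ref{pro2form}: because $H^0(Y, \Omega^1_{Y/\mathbb{F}}) = 0$ (Claim \ref{claim1forms}), the trace gives an isomorphism $H^0(Y^{(N)}_0, \Omega^2_{Y^{(N)}_0/\mathbb{F}}) \cong H^0(Y, \Omega^2_{Y/\mathbb{F}})$, and more precisely the pulled-back $2$-forms on $Y^N_0$ split as the direct sum $\bigoplus_i \mathrm{pr}_i^* H^0(Y, \Omega^2_{Y/\mathbb{F}})$. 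Now I would apply the generic-rank machinery of Section \ref{secmumford} to a nonzero $2$-form $\omega$ on $Y$ (which exists since $Y$ carries a nonzero $(n-1)$-form and $n=3$ makes $n-1 = 2$, so Theorem \ref{theototaro}(2) directly supplies $0 \neq \omega \in H^0(Y, \Omega^2_{Y/\mathbb{F}})$). By the computation \eqref{eqgenrankgamman2611}, pulling back via $\Gamma_N$ multiplies the generic rank by $N$, while pulling back via the degree-$M$ effective cycle $W$ bounds the generic rank by $M$ (Corollary \ref{corotraceprime}, controlling even the inseparable traces).

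The heart of the argument is then a parity/rank incompatibility. Applying $(-)^*\omega$ to both sides of the specialized form of \eqref{eqplanrel} and comparing generic tensor ranks: the left side has rank $\leq M \cdot \mathrm{trk}_{\rm gen}(\omega)$ coming from the degree-$M$ cycle $W$, while the right side, up to the $h_X$ term which acts as zero on $2$-forms, equals $\lambda$ times $[\Gamma_N]^*\omega$, whose generic tensor rank is $|\lambda| \cdot N \cdot \mathrm{trk}_{\rm gen}(\omega)$ by the splitting in Lemma \ref{pro2form} together with Lemma \ref{lerankprime}(ii)–(iii). Since $\lambda$ is odd, $|\lambda| \geq 1$, so the right side has rank at least $N \cdot \mathrm{trk}_{\rm gen}(\omega) > M \cdot \mathrm{trk}_{\rm gen}(\omega)$, the strict inequality coming from $M < N$; this contradicts the equality of the two $2$-forms. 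The main obstacle I anticipate is handling the coefficient $\lambda$ carefully: one must ensure that after specialization the $2$-form $[\Gamma_N]^*\omega$ is genuinely nonzero and that multiplying by an odd integer $\lambda$ in characteristic $2$ does not kill it — here the hypothesis $\mathrm{char}\,\mathbb{F} = 2$ combined with $\lambda$ odd is exactly what saves us, since $\lambda \omega = \omega \neq 0$ modulo $2$. The condition $N \geq 5$ (rather than merely $N > M$) enters to guarantee enough room for the smallest odd $M$ while keeping $N$ odd, and to apply Theorem \ref{theocrit}(\ref{itsit4}) in the regime where the rank comparison is decisive.
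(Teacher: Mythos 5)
Your opening step is a genuine gap, and it is exactly the obstacle the paper flags and works around. You derive the relation $W_{\mid U\times X}=\lambda\,\Gamma_{N\mid U\times X}+\mu\,U\times h_X$ with $U$ a dense open subset of $\mathcal{X}^{(N/B)}$ ``by Lemma \ref{prodiag}'', but Lemma \ref{prodiag} says nothing of the kind: it computes the image of ${\rm CH}^n(X_\eta\times X_\eta)$ in the limit of ${\rm CH}^n(U\times X_\eta)$ over open subsets $U\subset X_\eta$, i.e.\ it concerns correspondences parameterized by $X_\eta$ itself, over $K=\mathbb{Q}(B)$. Your relation, restricted to the generic point of $\mathcal{X}^{(N/B)}$, is the statement $z_M=\lambda\,\Gamma_{N,L}+\mu\,h_X$ in ${\rm CH}_0(X_L)$ with $L=\mathbb{Q}(\mathcal{X}^{(N/B)})$ --- precisely the group the paper says it cannot compute (``we do not know how to compute ${\rm CH}_0(X_L)$''); a priori $z_M$ need not lie in the subgroup generated by $\Gamma_{N,L}$ and $h_X$, so this step begs the question. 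The missing idea is Claim \ref{claim}: a rational map $\sigma:X_\eta\dashrightarrow X_\eta^{(N)}$, meeting the smooth locus $(X_\eta^{(N)})_0$, with $\Gamma_N\circ\sigma=N\Delta_{X_\eta}$. Composing both $\Gamma_N$ and $z_M$ with $\sigma$ turns them into self-correspondences of $X_\eta$, to which Lemma \ref{prodiag} genuinely applies; comparing degrees (both $M$ and $N$ odd) then gives $\alpha\,\Gamma_N\circ\sigma=\beta\,z_M\circ\sigma+\gamma\,X_\eta\times h_{X_\eta}+\Gamma'$ with $\alpha,\beta$ odd (Corollary \ref{eqpourrestdiagcomp}). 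This is also where the hypothesis $N\geq 5$ actually enters: it guarantees base-point-freeness of degree-$N$ linear systems on the genus-$3$ plane quartic curves used to construct $\sigma$ --- not the reasons you give. Moreover, after specialization one needs Lemma \ref{pro2form} not only for the rank computation but to show that $\overline{\sigma}^*$ is \emph{injective} on $H^0(\widetilde{\overline{X}}^{(N)}_0,\Omega^2)$ (using $N$ odd and ${\rm char}=2$, since $\sigma^*[\Gamma_N]^*\omega=N\omega=\omega$), so that the equality of forms pulled back by $\overline{\sigma}$ can be promoted to an equality on the symmetric product before comparing ranks.

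A secondary but real problem is your choice of rank tool. Corollary \ref{corotraceprime} and Theorem \ref{theocrit}, case \ref{itsit4}, bound the naive generic rank of $[W]^*\omega$ only by $M({\rm rk}_{\rm gen}(\omega)+1)$ (the $+1$ comes from inseparable traces, Lemma \ref{letraceinsepnew}), so a contradiction with ${\rm rk}_{\rm gen}([\Gamma_N]^*\omega)=N\,{\rm rk}_{\rm gen}(\omega)$ requires $N\geq M(n+1)/l=2M$, which is not implied by $N>M$. What the paper uses, and what your argument needs, is the \emph{tensor-rank} argument of Theorem \ref{theocrit}, case \ref{itsit2}, resting on Lemma \ref{letraceinsep}(i) (the inseparable trace of a $2$-form has tensor rank $\leq 1$): this yields ${\rm trk}_{\rm gen}([W]^*\omega)\leq M\,{\rm trk}_{\rm gen}(\omega)<N\,{\rm trk}_{\rm gen}(\omega)$ with no loss, and the contradiction follows from $M<N$ alone. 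Your parity bookkeeping ($\lambda$ odd, killed neither by ${\rm char}\,2$ nor by the $h_X$ term) does parallel the paper's use of $\alpha,\beta$ odd, but without the construction of $\sigma$ the proof does not get off the ground.
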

\begin{rema}{\rm The assumption $N\geq 7$ does  not seem to be a serious one. It will be used   only to simplify a detail of proof.}
\end{rema}
\begin{proof}[Proof of Proposition \ref{profinal}]  We argue by contradiction and assume that there is an effective $0$-cycle $z_M\in{\rm CH}_0(X_{L_N})$ of odd degree $M<N$. Let $\mathcal{Z}_M\in {\rm CH}^3(\mathcal{X}^{(N/B)}\times_B\mathcal{X})$ be any effective cycle whose restriction over the generic point of $\mathcal{X}^{(N/B)}$ is $z_M$. Our  first goal is to compare, at least modulo $2$, the two $0$-cycles
$$\gamma_{N},\,\,z_M\in {\rm CH}_0(X_{L_N}),$$    and get a relation as in (\ref{eqgammaZx}).
 We are not, in fact,  able to do this because we do not know how to compute  ${\rm CH}_0(X_{L_N})$, but Claim \ref{claim} and Corollary \ref{eqpourrestdiagcomp} will give us a partial comparison.
Consider  the generic  quartic threefold $X_\eta$ over the field $K:=\mathbb{Q}(B)$.  The symmetric product $X_{\eta}^{(N)}$ is singular along the complement
 $X_{\eta}^{(N)}\setminus  X_{\eta,0}^{(N)}$ and a fortiori  very singular along the small diagonal
$$\delta(X_{\eta})\subset X_{\eta}^{(N)},$$
where $\delta$ is the diagonal inclusion. Let us establish the following
\begin{claim} \label{claim} There exists a
rational map
$$\sigma: X_{\eta} \dashrightarrow X_{\eta}^{(N)}$$
with the following properties :

\begin{enumerate}
\item \label{item119juill} The image of $\sigma$ is not contained in the singular locus $X_{\eta}^{(N)}\setminus  X_{\eta,0}^{(N)}$ (so generically the image is contained in $X_{\eta,0}^{(N)}$).
    \item   We have the equality  \begin{eqnarray}\label{item219juill}\Gamma_N\circ \sigma= N\Delta_{X_\eta}\,\,{\rm in} \,\,\underset{{U}\underset{{\rm open}}{\subset} X_{\eta}}{\underrightarrow{\rm lim}}\,{\rm CH}^n(U\times X_\eta).
        \end{eqnarray}
        \end{enumerate}
\end{claim}
In (\ref{item219juill}), the notation $\Gamma_N\circ \sigma$ has the following meaning: using condition \ref{item119juill} and the fact that $X_{\eta,0}^{(N)}$ is smooth,  we can compose  the correspondences  $\Gamma_N$  and $Z_M:=\mathcal{Z}_{M,\eta}$ with  the  map $\sigma$   over the dense  Zariski open set $U$ of $X_\eta$ where $\sigma$ is  a well-defined morphism with  values in $X^{(N)}_0$. This gives us the  correspondences
 \begin{eqnarray} \label{eqtrucdu26juill}\Gamma_N\circ \sigma:=(\sigma,Id_{X_\eta})^*\Gamma_N,\,\,{Z}_{M}\circ \sigma:=(\sigma,Id_{X_\eta})^*{Z}_{M}\,\,\in{\rm CH}^n(U\times X_\eta).\end{eqnarray}
 \begin{rema}{\rm  Even if ${Z}_{M}$ is effective, the composition ${Z}_{M}\circ \sigma \in{\rm CH}^n(U\times X_\eta)$ might not be an effective cycle, but we do not need  effectivity of ${Z}_{M}\circ \sigma$  in the argument below.}
 \end{rema}
\begin{proof}[Proof of Claim \ref{claim}]
As we are in characteristic $0$, for any point $x$ of $X_\eta$ defined over a field $L$, we can choose by Bertini a smooth  plane section $C_x\subset X_{\eta,L}$ passing through $x$. The curve $C_x$ is thus of genus $3$. The point $x$ of $C_x$ defines a degree $1$ divisor  $D_x$ of $C_x$, and as $N\geq 7$, the linear system $|N D_x|$ is very ample. Thus, again by Bertini, there is a reduced  divisor $z_x$ of $C_x$ which is linearly equivalent to $ND_x$ in $C_x$. As $C_x\subset X_{\eta,L}$, this provides as well a degree $N$ effective reduced $0$-dimensional subscheme $z_x$ of $X_{\eta,L}$, hence an element of $X^{(N)}_{\eta,0}(L)$, which is rationally equivalent to $Nx$ in $X_{\eta,L}$.  We perform this construction with the generic point $\gamma_1$ of $X_\eta$, which is defined over $L_1=K(X_\eta)$. This provides the desired rational map $\sigma$ satisfying condition \ref{item119juill}. The rational map  $\sigma$ satisfies   (\ref{item219juill}) by construction.
\end{proof}

  With the notations of Claim \ref{claim}, we now have the following corollary of Lemma \ref{prodiag}:
\begin{coro}\label{eqpourrestdiagcomp}  For   some  integers $\alpha,\,\beta,\,\gamma$ with $\alpha$ and $\beta$ odd, the two correspondences $\Gamma_N$ and $Z_N$ between $X_{\eta,0}^{(N)}$ and $X_{\eta}$ satisfy the following comparison:
\begin{eqnarray}\label{eqcompacorrespdelat} \alpha \Gamma_N\circ \sigma=\beta {Z}_{M}\circ \sigma +\gamma X_{\eta}\times h_{X_{\eta}}+ \Gamma'\,\,{\rm in}\,\,{\rm CH}^3(X_{\eta}\times X_{\eta}),
\end{eqnarray}
where $\Gamma'$ is a   codimension $3$ cycle  of $X_{\eta}\times X_{\eta}$ which is supported on  $W\times X_{\eta}$ for some proper closed algebraic subset $W\subset X_{\eta}$.
\end{coro}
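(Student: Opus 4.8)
The plan is to transport the whole identity into the direct limit group $\varinjlim_{U}{\rm CH}^3(U\times X_\eta)$, where by the second assertion of Lemma \ref{prodiag} every class is an integral combination of $\Delta_{X_\eta}$ and $X_\eta\times h_{X_\eta}={\rm pr}_2^*h_{X_\eta}$. Property \ref{item1} of Claim \ref{claim} guarantees that $\sigma$ lands generically in $(X_\eta^{(N)})_0$, so the two correspondences $\Gamma_N\circ\sigma$ and $z_M\circ\sigma$ are defined over a dense open set of $X_\eta$ and hence determine well-defined elements of this limit group. By \eqref{eqrestcycledelat} one has $\Gamma_N\circ\sigma=N\Delta_{X_\eta}$ there, while Lemma \ref{prodiag} lets me write $z_M\circ\sigma=a\,\Delta_{X_\eta}+b\,(X_\eta\times h_{X_\eta})$ for some integers $a,b$.

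Next I would pin down the parity of $a$ by applying the ``degree over the first factor'' homomorphism $\deg\colon\varinjlim_U{\rm CH}^3(U\times X_\eta)\to\mathbb{Z}$, obtained by restricting a correspondence over the generic point of the first factor $X_\eta$ and taking the degree of the resulting $0$-cycle; this is well defined on the limit since restriction to the generic fibre descends to Chow groups. One has $\deg\Delta_{X_\eta}=1$ and $\deg(X_\eta\times h_{X_\eta})=\deg h_{X_\eta}=4$. Since $\sigma$ is a single-valued rational map, its graph has degree $1$ over the source, and $z_M$ has degree $M$ over $X_\eta^{(N)}$, so $\deg(z_M\circ\sigma)=M$. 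Comparing coefficients gives $a+4b=M$, and as $M$ is odd this forces $a$ to be odd, independently of the chosen expression for $z_M\circ\sigma$.

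Finally I would set $\alpha:=a$, $\beta:=N$ and $\gamma:=-Nb$. Both $\alpha$ and $\beta$ are odd ($N$ is odd by hypothesis, $a$ by the previous step), and in the limit group $\alpha\,\Gamma_N\circ\sigma=aN\Delta_{X_\eta}=\beta\,z_M\circ\sigma+\gamma\,(X_\eta\times h_{X_\eta})$, the $\Delta$-coefficients matching as $aN=Na$ and the $X_\eta\times h_{X_\eta}$-terms cancelling by the choice of $\gamma$. Lifting this identity from the limit back to ${\rm CH}^3(X_\eta\times X_\eta)$ introduces precisely an ambiguity by a cycle supported on $W\times X_\eta$ for some proper closed $W\subset X_\eta$, and this cycle is exactly the $\Gamma'$ appearing in \eqref{eqcompacorrespdelat}.

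The main obstacle is the careful handling of the composition of correspondences with the merely rational map $\sigma$: one must verify that $\Gamma_N\circ\sigma$ and $z_M\circ\sigma$ extend to honest cycles on $X_\eta\times X_\eta$ whose only indeterminacy is supported over a proper closed subset of the first factor, and that the degree computation $\deg(z_M\circ\sigma)=M$ is unaffected by this indeterminacy. Once these points are secured, the parity bookkeeping producing odd $\alpha,\beta$ is immediate, and it is precisely this oddness that will later survive reduction modulo $2$ to feed the rank argument of Theorem \ref{theocrit}, \ref{itsit3}.
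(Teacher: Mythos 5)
Your proposal is correct and takes essentially the same route as the paper's proof: decompose the correspondences in $\varinjlim_U{\rm CH}^3(U\times X_{\eta})$ via Lemma \ref{prodiag}, extract the parity of the $\Delta_{X_{\eta}}$-coefficient from the degree over the first factor (using that $M$ and $N$ are odd while $\deg h_{X_{\eta}}=4$ is even), and lift the resulting identity back to ${\rm CH}^3(X_{\eta}\times X_{\eta})$ by the localization exact sequence, the error term being the cycle $\Gamma'$ supported on $W\times X_{\eta}$. The only cosmetic difference is that you fix $\Gamma_N\circ\sigma=N\Delta_{X_{\eta}}$ by invoking (\ref{eqrestcycledelat}), so that $\beta=N$, whereas the paper applies the same degree-parity argument to both $\Gamma_N\circ\sigma$ and $z_M\circ\sigma$; the two bookkeepings are equivalent.
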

\begin{proof} Indeed, we apply Lemma \ref{prodiag} that tells us that both cycles
 $\Gamma_N\circ \sigma$ and ${Z}_{M}\circ \sigma$ can be written, after restriction to $\underset{{U}\underset{{\rm open}}{\subset} X_{\eta}}{\underrightarrow{\rm lim}}{\rm CH}^3(U\times X_{\eta})$,  as combinations with integral coefficients of the cycles $\Delta_{X_{\eta}}$ and $X_{\eta}\times h_{X_{\eta}}$. Moreover, via the first projection, the cycle  $\Gamma_N\circ \sigma$ has degree $N$ over $X_{\eta}$,  the cycle  ${Z}_{M}\circ \sigma$ has degree $M$ over $X_{\eta}$, the diagonal $\Delta_{X_{\eta}}$ has degree $1$ over $X_{\eta}$ and the cycle $X_{\eta}\times h_{X_{\eta}}$ has degree $4$ over $X_{\eta}$. Writing
 \begin{eqnarray}\label{eqcompacorrespdelatfinrev}\Gamma_N\circ \sigma=a\Delta_{X_{\eta}}+bX_{\eta}\times h_{X_{\eta}}\,\,{\rm in}\,\,\underset{{U}\underset{{\rm open}}{\subset} X_{\eta}}{\underrightarrow{\rm lim}}{\rm CH}^3(U\times X_{\eta}),\\
 \nonumber
 Z_M\circ \sigma=c\Delta_{X_{\eta}}+dX_{\eta}\times h_{X_{\eta}}\,\,{\rm in}\,\,\underset{{U}\underset{{\rm open}}{\subset} X_{\eta}}{\underrightarrow{\rm lim}}{\rm CH}^3(U\times X_{\eta}),\end{eqnarray}
 we conclude, using the fact that both $M$ and $N$ are odd and by comparing the degrees in (\ref{eqcompacorrespdelatfinrev}), that $a$ is odd and $c$ is odd, from which (\ref{eqcompacorrespdelat}) follows by the localization exact sequence, since by (\ref{eqcompacorrespdelatfinrev}) we can take $\alpha=c,\,\beta=a$.
\end{proof}
We now conclude the proof of Proposition \ref{profinal}. The constructions above specialize from the generic fiber $X_{\eta}$ to the special fiber $\overline{X}$ of Theorem \ref{theototaro}.
The cycle ${Z}_M$ has a Fulton specialization to a cycle $$\overline{Z}_M\in {\rm CH}^3(\overline{X}^{(N)}_0\times \overline{X}),$$
which is effective by \cite[Lemme 2.10]{colliot}, and has degree $M$ over $\overline{X}^{(N)}_0$.

We  claim that   the rational map $\sigma$  of Claim \ref{claim}, specializes  well. We observe for this that, given a point $\overline{x}$ of $\overline{X}$ which is the specialization of a point $x$ of $X_\eta$, the specialization $C_{\overline{x}}$ of the  curve $C_x$   used in the proof of   Claim \ref{claim}  is a general cyclic double cover of a smooth conic ramified at $8$ points and $\overline{x}$ is a general point on it. The curve $C_{\overline{x}}$ is thus singular but it is smooth at $\overline{x}$ and it is  lci, hence  $\overline{x}$ defines  a Cartier divisor $\mathcal{O}_{C_{\overline{x}}}(D_{\overline{x}})$ on $C_{\overline{x}}$.   Serre duality shows that  $H^1(C_{\overline{x}},\mathcal{O}_{C_{\overline{x}}}(ND_{\overline{x}}-z))=0$ for any divisor $z\subset C_{\overline{x}}$ of degree $2$ supported on the regular locus $C_{\overline{x},{\rm reg}}$ of $C_{\overline{x}}$, so that the Cartier divisor $ND_{\overline{x}}$ is very ample on $C_{\overline{x},{\rm reg}}$.  We can then apply Bertini and conclude that there is a smooth member $z_{\overline{x}}$ of $|ND_{\overline{x}}|$, assuming the residue field of $\overline{x}$ is infinite. This also provides a smooth $0$-dimensional subscheme $z_{\overline{x}}$ of length $N$  of $\overline{X}$, hence an element of $\overline{X}^{(N)}_0$ defined over the same field as $\overline{x}$.  As $H^1(C_{\overline{x}},\mathcal{O}_{C_{\overline{x}}}(ND_{\overline{x}}))=0$, the linear system $|ND_x|$ on $C_x$ specializes to the  linear system $|ND_{\overline{x}}|$ on $C_{\overline{x}}$.
Applying these considerations to the generic point  of $\overline{X}$ gives the desired specialization $\overline{\sigma}$ of $\sigma$. This proves the claim.

As before, the existence of the specialized rational map $\overline{\sigma}$ inducing over an open set $\overline{U}\subset \overline{X}$ a morphism with value in $\overline{X}^{(N)}_0$ allows to define
the composed correspondences
$$\Gamma_N\circ \overline{\sigma},\, \overline{Z}_M\circ\overline{\sigma}\in {\rm CH}^3(\overline{U}\times \overline{X}).$$
As these correspondences are the Fulton specializations of the correspondences  (\ref{eqtrucdu26juill}), they  still satisfy the relation (\ref{eqcompacorrespdelat}).

 Using the  desingularization $\tau:\widetilde{\overline{X}}\rightarrow \overline{X}$, we lift the cycle $\overline{Z}_M$ to an effective cycle  $  \widetilde{\overline{Z}}_M$, and we lift similarly the specialized  rational map $\overline{\sigma}$ (keeping the same notation).  Using the fact that $\tau$ is ${\rm CH}_0$-universally trivial (see Theorem \ref{theototaro}), we conclude that formula (\ref{eqcompacorrespdelat}) still holds for the lifted specialized cycles, giving

\begin{eqnarray}\label{eqcompacorrespdelatspec}  \alpha \Gamma_N\circ \overline{\sigma}= \beta \widetilde{\overline{Z}}_M\circ \overline{\sigma} +\gamma \widetilde{\overline{X}}\times h_{\widetilde{\overline{X}}}\,\,{\rm in}\,\,\underset{{U}\underset{{\rm open}}{\subset} \widetilde{\overline{X}}}{\underrightarrow{\rm lim}}{\rm CH}^3(U\times \widetilde{\overline{X}}).
\end{eqnarray}

Note that, for the same reasons,  formula (\ref{item219juill}) also  holds for the lifted specialized cycles, giving the formula
\begin{eqnarray}\label{tructruc1907}  \Gamma_N\circ \overline{\sigma}= N\Delta_{\widetilde{\overline{X}}}\,\,{\rm in}\,\, \underset{{U}\underset{{\rm open}}{\subset} \widetilde{\overline{X}}}{\underrightarrow{\rm lim}}\,{\rm CH}^3(U\times \widetilde{\overline{X}}).
\end{eqnarray}

As in Totaro's argument and as was already used in the proof of Theorem \ref{theocrit}, we  deduce from (\ref{eqcompacorrespdelatspec}) the similar equality for the actions of the considered cycles on $2$-forms $\omega\in H^0(\widetilde{\overline{X}},\Omega^2_{\widetilde{\overline{X}}/\mathbb{F}})$. Here the field $\mathbb{F}$ has characteristic $2$, and using the fact that the numbers $\alpha $ and $\beta$ are odd, we finally conclude that, for any $\omega\in H^0(\widetilde{\overline{X}},\Omega^2_{\widetilde{\overline{X}}/\mathbb{F}})$,
\begin{eqnarray}\label{eqcompacorrespdelatspeconforms}  \overline{\sigma}^*( [\Gamma_N]^*\omega)=  \overline{\sigma}^*([\widetilde{\overline{Z}}_M]^*\omega)  \,\,{\rm in}\,\,H^0(\widetilde{\overline{X}},\Omega^2_{\widetilde{\overline{X}}/\mathbb{F}}).
\end{eqnarray}
Finally, as $N$ is odd, it follows from (\ref{tructruc1907}), by letting cycle classes  act on  $2$-forms, that for any $\omega\in H^0(\widetilde{\overline{X}},\Omega_{\widetilde{\overline{X}}/\mathbb{F}}^2)$,
\begin{eqnarray} \label{tructructruc1907} \overline{\sigma}^*([\Gamma_N]^*\omega)=N\omega=\omega\,\,{\rm in}\,\,H^0(\widetilde{\overline{X}},\Omega_{\widetilde{\overline{X}}/\mathbb{F}}^2).
\end{eqnarray}
Using (\ref{tructructruc1907})  and Lemma \ref{pro2form}, we get  that  $\overline{\sigma}^* $ induces an isomorphism between
$H^0( \widetilde{\overline{X}}^{(N)}_0, \Omega^2_{\widetilde{\overline{X}}^{(N)}_0/\mathbb{F}})$ and $H^0(\widetilde{\overline{X}},\Omega^2_{\widetilde{\overline{X}}/\mathbb{F}})$. It follows then from (\ref{eqcompacorrespdelatspeconforms}) that, for any $\omega\in H^0(\widetilde{\overline{X}},\Omega^2_{\widetilde{\overline{X}}/\mathbb{F}})$,
\begin{eqnarray}\label{eqcompacorrespdelatspeconformssurxN}   [\Gamma_N]^*(\omega)=  [\widetilde{\overline{Z}}_M]^*(\omega)  \,\,{\rm in}\,\,H^0(\widetilde{\overline{X}}^{(N)}_0,\Omega^2_{\widetilde{\overline{X}}^{(N)}_0/\mathbb{F}}).
\end{eqnarray}
As $\widetilde{\overline{Z}}_M$ is effective of degree $M<N$ over $\widetilde{\overline{X}}^{(N)}_0$, and $\widetilde{\overline{X}}$ has a nonzero $2$-form, we finally get a contradiction by the   rank argument used to prove Theorem \ref{theocrit}, \ref{itsit2}. \end{proof}

 We can prove the same result for quartic fourfolds by a Totaro-Koll\'ar specialization to a double cover $\overline{X}$ of a quadric in characteristic $2$ such that a desingularization $\widetilde{\overline{X}}$  has a nonzero $3$-form (or double covers of $\mathbb{P}^4$ ramified along a sextic or octic hypersurface). We have now to use  Theorem \ref{theocrit}, \ref{itsit3}, which deals with the rank of traces of $3$-forms, instead of $2$-forms. The only extra proof needed is the analogue of Lemma \ref{pro2form}, now with $l=3,\,n=4$.  This is left to the reader.

\section{The case of del Pezzo surfaces \label{seccomplement}}
We give  here for completeness the proof of Theorem \ref{theocompletdp} (which is established in \cite{colliot}, \cite{voisindelpezzo}, only when $d_S\leq 3$).  We will only describe the strategy of the proof, which follows closely \cite{voisindelpezzo}, to which we refer for more details. In practice, one should  be a little  careful in checking  case by case a certain technical statement (see Remark \ref{reamresteaprouver} below).
Let $\mathcal{O}_S(1):=K_S^{-1}$ be the anticanonical line bundle of $S$ and let $h_S:=c_1(K_S)^2\in{\rm CH}_0(S)$. This is a $0$-cycle on $S$ of degree $d_S$. One has by Hirzebruch-Riemann-Roch and Kodaira vanishing
\begin{eqnarray}\label{eqHRR}h^0(S,\mathcal{O}_S(l))=1+\frac{(l^2+l)d_S}{2}.\end{eqnarray}

In order to prove Theorem \ref{theocompletdp}, it suffices to prove the following
\begin{prop} \label{propaprouver} There exists an integer $N_1$ with the following property: for any smooth del Pezzo surface $S$ over a field $K$ of characteristic $0$, and any   effective $0$-cycle $z\in{\rm CH}_0(S)$  of degree $d> N_1$, there exists  $\gamma \in\mathbb{Z}$ such that
 the  $0$-cycle $z':=\pm z+\gamma h_S$ is effective   of degree strictly smaller than $d$.
\end{prop}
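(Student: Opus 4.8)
The plan is to reduce the degree of an effective cycle by passing an auxiliary curve through it and doing Riemann--Roch on that curve, exactly in the spirit of \cite{voisindelpezzo}. Given an effective $z\in{\rm CH}_0(S)$ of large degree $d$, I would first choose an integer $l$ with $h^0(S,\mathcal{O}_S(l))>d$ --- by (\ref{eqHRR}) this holds as soon as $1+\frac{l(l+1)d_S}{2}>d$, i.e.\ for $l$ of order $\sqrt{2d/d_S}$ --- and produce a curve $C\in|\mathcal{O}_S(l)|=|-lK_S|$ containing the support of $z$. The point is that $C$ should be taken geometrically integral and smooth along ${\rm Supp}(z)$, so that $z$ is an honest effective Cartier divisor on $C$ and Riemann--Roch applies. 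Adjunction gives the dualizing sheaf $\omega_C\cong\mathcal{O}_S(l-1)|_C$ and arithmetic genus $g=1+\frac{l(l-1)d_S}{2}$; moreover, writing $j:C\hookrightarrow S$ and $\mathcal{O}_C(1):=\mathcal{O}_S(1)|_C$, the projection formula gives $j_*c_1(\mathcal{O}_C(m))=m\,l\,h_S$ in ${\rm CH}_0(S)$ for every $m$, which is what lets a divisor relation on $C$ be read as a relation between $z$ and multiples of $h_S$.

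Next I would record the two elementary reduction moves, matching the two signs in the statement. If the class $\mathcal{O}_C(z)\otimes\mathcal{O}_C(-1)$ is effective on $C$, then $z$ dominates a divisor in $|\mathcal{O}_C(1)|$ and $z-l\,h_S$ is effective of degree $d-ld_S<d$ (this is the sign $+$, $\gamma=-l$). Symmetrically, if for some $m$ the class $\mathcal{O}_C(m)\otimes\mathcal{O}_C(-z)$ is effective --- i.e.\ $z$ is contained in an effective divisor $D\in|\mathcal{O}_C(m)|$ --- then the residual $D-z$ realizes $-z+ml\,h_S$ as an effective cycle of degree $mld_S-d$, which is $<d$ exactly when $d>\frac{mld_S}{2}$ (this is the sign $-$, $\gamma=ml$). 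The whole statement then reduces to showing that, for $d$ large, one can choose $l$ and $m$ so that the relevant complete linear system on $C$ is non-empty while the output degree is genuinely $<d$. For the sign $-$ move this amounts to finding an integer $m$ with $mld_S\in[\,d+g,\,2d\,)$ and invoking Riemann--Roch (a class of degree $\geq g$ on a genus-$g$ curve is effective); since the minimal admissible $l$ already forces $d\geq g$, such a window is non-empty, and the residuation $D-z$ supplies the required $z'$.

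The delicate part, and the genuine source of the constant $N_1$, is the numerical bookkeeping near the boundary $d\approx g$: the window $[\,d+g,\,2d\,)$ has length $d-g$, which for the minimal admissible $l$ lies in $[\,0,\,ld_S-1\,]$ and so is comparable to the spacing $ld_S$ of multiples of $ld_S$ precisely in the transitional range $g\leq d\lesssim\frac{l^2d_S}{2}$. There one must verify that an integer $m$ is still available --- possibly after replacing $l$ by a nearby admissible value, or by falling back on the sign-$+$ move --- once $d$ exceeds an explicit threshold. This is exactly the elementary but fiddly verification alluded to in Remark \ref{reamresteaprouver}, to be run through for each $d_S\in\{1,\dots,9\}$ by feeding (\ref{eqHRR}) into the inequalities above. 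Since there are only finitely many deformation families of del Pezzo surfaces and each verification yields a bound depending only on $d_S$, the resulting $N_1$ of Proposition \ref{propaprouver}, and hence the $N$ of Theorem \ref{theocompletdp}, can be taken independent of $S$ and of the ground field.

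I expect the main obstacle to be not the Riemann--Roch arithmetic but the geometric input feeding it: guaranteeing, over an arbitrary field of characteristic $0$ and for an arbitrary effective $z$ (in particular when $z$ has points of high multiplicity or the curve is forced to acquire singularities at the imposed points), a member $C$ of $|\mathcal{O}_S(l)|$ that is geometrically integral and smooth along ${\rm Supp}(z)$. Controlling this transversality is what requires the case-by-case treatment, and it is the step I would handle most carefully, following the corresponding arguments of \cite{colliot} and \cite{voisindelpezzo}.
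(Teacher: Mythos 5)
Your two reduction moves live entirely on the auxiliary curve $C\in|-lK_S|$, and this is where the argument breaks -- not as ``fiddly bookkeeping'' but as a structural obstruction. With $l$ minimal such that $h^0(S,\mathcal{O}_S(l))>d$, formula (\ref{eqHRR}) and adjunction give $g:=g(C)=1+\tfrac{l(l-1)d_S}{2}=h^0(S,\mathcal{O}_S(l-1))$, so your choice of $l$ forces $g\leq d\leq g+ld_S-1$; write $d=g+k$ with $0\leq k\leq ld_S-1$. The window $[\,d+g,\,2d\,)$ for the sign-$-$ move has length $k<ld_S$, always shorter than the spacing $ld_S$ of the available degrees $mld_S$, and a direct computation (using $l(l-1)d_S=2g-2$) shows it contains a multiple of $ld_S$ if and only if $\tfrac{ld_S-2}{2}<k\leq ld_S-2$. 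For $k\leq\tfrac{ld_S-2}{2}$ -- in particular for $d=g$ exactly, which occurs for every $l$, so for roughly half of all large degrees -- the sign-$-$ move fails; the sign-$+$ move needs $d-ld_S\geq g$, i.e.\ $k\geq ld_S$, and fails; a smaller $l$ admits no curve at all through a generic $z$ (generic points impose independent conditions, so $h^0(\mathcal{I}_Z(l-1))=0$); and a larger $l$ has genus $g_{l+1}=1+\tfrac{l(l+1)d_S}{2}=h^0(S,\mathcal{O}_S(l))>d$, so Riemann--Roch can no longer produce anything of degree $<d$. Worse, this cannot be repaired within your framework: auxiliary points of class $h_S$ shift the window but not its length $d-g$, and for the generic $z\in S^{(d)}$ the class $\mathcal{O}_C(z)$ is a general point of ${\rm Pic}^d(C)$, so every class $m\mathcal{O}_C(1)-z$ or $z-m\mathcal{O}_C(1)$ (the only ones your construction can use) of degree $<g$ is a general point of its ${\rm Pic}^e(C)$ and is therefore \emph{not} effective, while degrees in $[\,g,d\,)$ are excluded by the window computation. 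So in this range no cycle $z'$ of the required shape is supported on any such $C$: the rational equivalence asserted by Proposition \ref{propaprouver} must move cycles off the curve, which a one-dimensional Riemann--Roch argument cannot do. Salvaging your route would need genuinely new input (e.g.\ integral members of $|\mathcal{I}_Z(l)|$ with many nodes and Riemann--Roch on their normalizations -- a Severi-type existence problem), not case-by-case numerics.

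This is precisely why the paper's proof abandons curves. It first reduces to the \emph{generic} $z\in S^{(d)}$ by Fulton specialization, which preserves effectivity (\cite[Lemme 2.10]{colliot}); note that this reduction, which your proposal misses, also disposes entirely of what you single out as the main obstacle -- for special configurations your worry is in fact fatal (if more than $ld_S$ points of $z$ lie on an anticanonical curve $C_0$, every member of $|-lK_S|$ through them contains $C_0$, so no integral $C$ of the right degree exists), but for generic $z$ it never arises. It then realizes $z$ as $c_2(E)$ for a rank-$2$ bundle $E$ obtained by the Serre construction from a nonzero class in ${\rm Ext}^1(\mathcal{I}_Z(l+1),\mathcal{O}_{S_L})$ (nonzero exactly because $d>h^0(S,\mathcal{O}_S(l))$), shows $E$ has many sections, and uses a second section vanishing on a subscheme of class $h_S$ to exhibit $z-h_S$ as effective; the other sign comes from the liaison involution $z\mapsto(l+1)^2h_S-z$, and the boundary cases are handled by augmenting $z$ by a cycle of class $h_S$ and twisting by one more. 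The two-dimensional construction produces rational equivalences not supported on any curve through $z$, with degree steps of size $d_S$ rather than $ld_S$, and that is exactly what bridges the range where your approach fails; the ``extra checking'' of Remark \ref{reamresteaprouver} concerns transversality of sections of $E$, not the arithmetic above.
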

Let us first  show how Proposition \ref{propaprouver} implies Theorem \ref{theocompletdp}. This argument already appears in \cite{voisindelpezzo}. From proposition \ref{propaprouver}, we deduce by induction on $d={\rm deg}\,z$ that any effective $0$-cycle on $S$ can be written modulo rational equivalence as
\begin{eqnarray}\label{eqpourcyclereddege} z=\pm z'+\gamma h_S\,\,{\rm in}\,\,{\rm CH}_0(S),\end{eqnarray}
where $\gamma\in \mathbb{Z}$ and $z'$ is an effective $0$-cycle of degree $\leq N_1$. Next by boundedness of del Pezzo surfaces, there exists an integer $N\geq N_1$ such that for any effective zero-cycle $z'$ on $S$ of degree $\leq N_1$, the  $0$-cycle $z''=\gamma' h_S-z'$ is both of degree $\leq N$ and  effective, for some number $\gamma'\in \mathbb{Z}$. We then conclude from (\ref{eqpourcyclereddege}) that any effective $0$-cycle on $S$ can be written modulo rational equivalence as
\begin{eqnarray}\label{eqpourcyclereddege1} z= z''+\gamma'' h_S\,\,{\rm in}\,\,{\rm CH}_0(S),\end{eqnarray}
where $z''$ is effective of degree $\leq N$.
Finally (\ref{eqpourcyclereddege1}) implies  Theorem \ref{theocompletdp} with the same integer $N$.
Indeed, any $0$-cycle $z$ can be written as \begin{eqnarray}\label{eqpourzzunzdeux} z=z_1-z_2\,\,{\rm  in}\,\, {\rm CH}_0(S)\end{eqnarray}  for some effective $0$-cycles $z_1$ and $z_2$. Given $z_2$, for some large enough  positive integer $\alpha$, $\alpha h_S-z_2$ is effective. Indeed, we choose a smooth projective  curve $C\subset X$ in some linear system $|-l K_S|$ supporting $z_2$. The curve $C$ also supports a multiple $\beta h_S$ of $h_S$, and by Riemann-Roch on $C$, we get that for $\beta'$ large enough, the $0$-cycle
$\beta'w-z_2$ is effective on $C$, where $w$ is any $0$-cycle on $C$ of class $\beta h_S$.
It follows that (\ref{eqpourzzunzdeux}) becomes
\begin{eqnarray}\label{eqpourzzunzdeux1} z=z_{\rm eff}+\delta h_S \,\,{\rm in}\,\,{\rm CH}_0(S)\end{eqnarray}
for some effective $0$-cycle $z_{\rm eff}$ on $S$. We can then apply (\ref{eqpourcyclereddege1}) to $z_{\rm eff}$ and conclude that any $0$-cycle $z$ can be written as
$$ z= z''+\gamma'' h_S\,\,{\rm in}\,\,{\rm CH}_0(S),$$
where $z''$ is effective of degree $\leq N$.
If ${\rm deg}\,z\geq N$, then $\gamma''\geq 0$ and $z$ is effective, proving Theorem \ref{theocompletdp}.

\begin{proof}[Proof of Proposition \ref{propaprouver}]
As we are over a field $K$ of characteristic $0$, an effective $0$-cycle of degree $d$ on $S$ is also a $K$-point $z\in S^{(d)}(K)$. As proved in  \cite[Lemme 2.10]{colliot} and  used repeatedly in \cite{voisindelpezzo} and in this paper, the Fulton specialization of cycles  preserves effectivity. It thus suffices to prove Proposition \ref{propaprouver} when $z\in S^{(d)}$ is generic (defined over the function field  $L:=K(S^{(d)})$), as the result for any $z$  then follows by specialization. The proof is by induction on the degree $d={\rm deg}\,z$.  Let $l$ be the unique integer such that

\begin{eqnarray}\label{eqineq} h^0(S,\mathcal{O}_S(l))< {\rm deg}\,z\leq h^0(S,\mathcal{O}_S(l+1)).
\end{eqnarray}
Assume first that (*) ${\rm deg}\,z\leq h^0(S,\mathcal{O}_S(l+1))-2$.

Then
$(l+1)^2h_S-z$ is effective by \cite[Lemma 3.6]{voisindelpezzo}. As ${\rm deg}\,(l+1)^2h_S=(l+1)^2d_S$, it follows that we can assume,   replacing
$z$ by $(l+1)^2h_S-z$ if necessary, that
\begin{eqnarray} \label{eqinedimres}{\rm deg}\,z \leq d_S\frac{(l+1)^2}{2}.\end{eqnarray}
Note that,   as our argument is  by induction on the degree $d={\rm deg}\,z$, we only have  to study the case where ${\rm deg}\,z ={\rm deg}\,( (l+1)^2h_S-z)= d_S\frac{(l+1)^2}{2}$, so the left inequality (\ref{eqineq}) still holds.

The generic effective $0$-cycle $z$ defines as well a reduced subscheme
$Z\subset S_L$ of length $d$. Using the left strict inequality in (\ref{eqineq}), we get that $H^1(S_L,\mathcal{I}_Z(l))\not=0$, hence by Serre duality on $S_L$, using $K_{S_L}=\mathcal{O}_{S_L}(-1)$,
$${\rm Ext}^1(\mathcal{I}_Z(l+1),\mathcal{O}_{S_L})\not=0.$$
A nonzero  extension class
$e\in {\rm Ext}^1(\mathcal{I}_{S_L}(l+1),\mathcal{O}_{S_L})$ provides a rank $2$ coherent sheaf
$E$ on $S_L$, which fits in an extension
\begin{eqnarray}\label{eqnou*}0\rightarrow \mathcal{O}_{S_L}\rightarrow E\rightarrow \mathcal{I}_{Z}(l+1)\rightarrow 0.\end{eqnarray}
Using the genericity of $z$, the coherent sheaf $E$ is in fact a vector bundle for a general choice of extension $e$. Furthermore it  has a section $s$ vanishing exactly along $Z$, which is given by the map $\mathcal{O}_{S_L}\rightarrow E$ in (\ref{eqnou*}). It follows that
$$z=c_2(E)\,\,{\rm in}\,\,{\rm CH}_0(S_L).$$
 Furthermore $E$  satisfies by (\ref{eqnou*})
\begin{eqnarray}\label{eqnounou*}h^0(S_L,E)=1+h^0(S_L,\mathcal{I}_Z(l+1)).\end{eqnarray}
Using (\ref{eqinedimres}) and (\ref{eqHRR}), we get from (\ref{eqnounou*}) that
\begin{eqnarray}\label{eqineqpourhzeroE} h^0(S,E)\geq 2+d_S\frac{(l+1)}{2}.
\end{eqnarray}
We then conclude using the genericity of $Z$ that, if $\frac{(l+1)}{2}\geq 2$, that is, if $l\geq 3$, there exists a section $s'$  of $E$  vanishing along a  subscheme $Z_0\subset S$ of class $h_S$.
If the vanishing locus of $s'$ has dimension $0$, then
the zero-cycle $V(s')-h_S$ is effective and rationally equivalent to $c_2(E)-h_S=z-h_S$. As the subschemes $Z_0$ are not generic, it is however not clear that there exists a section $s'$ vanishing along $Z_0$ and with $0$-dimensional vanishing locus. We overcome this by seeing $Z_0$ as the specialization of a generic $0$-dimensional subscheme
$Z_{0,{\rm gen}}\subset S_L$ of length $d_S$. We prove then the existence of a section $s'$ vanishing along $Z_{0,gen}$ and with $0$-dimensional vanishing locus. Thus $z-Z_{0,{\rm gen}}$ is effective, and by specialization $z-h_S$ is effective.

 In conclusion,  assuming (*), the induction step for Proposition \ref{propaprouver} is proved if  $l\geq 3$, hence if $d> h^0(S,\mathcal{O}_S(3))$. As $h^0(S,\mathcal{O}_S(3))\leq 55$, the induction step works for $d\geq 56$, assuming (*).

 In the remaining cases where
\begin{eqnarray}\label{eqremainingcase}  d={\rm deg}\,z= h^0(S,\mathcal{O}_S(l+1))-1\,\, {\rm or}\,\,{\rm deg}\,z= h^0(S,\mathcal{O}_S(l+1)),\end{eqnarray}
  we do the following trick.  Choosing a  reduced subscheme $Z_0\subset S_L$ of class $z_0=h_S$ and not intersecting $Z$, we consider the $0$-dimensional subscheme  $Z'=Z\cup Z_0\subset S_L$ which is reduced  and effective of  degree \begin{eqnarray}\label{eqinehzerodeg}d+d_S>h^0(S_L,\mathcal{O}_{S_L}(l+1))\end{eqnarray}  (we can assume here $d_S\geq 4$ by Theorem \ref{theorappeldp}).
  Using  inequality (\ref{eqinehzerodeg}), we now apply the same strategy of constructing a vector bundle $E$ which fits in an extension
  $$0\rightarrow \mathcal{O}_{S_L}\rightarrow E\rightarrow \mathcal{I}_{Z'}(l+2)\rightarrow 0,$$
  and has as above a section $s$ vanishing exactly along $Z'=Z\cup Z_0$, where $Z_0$ represents the class $h_S$.
  One has $$h^0(S_L,E)=1+h^0(S_L,\mathcal{I}_{Z'}(l+2))
  \geq 1+h^0(S_L,\mathcal{O}_{S_L}(l+2))-h^0(S_L,\mathcal{O}_{S_L}(l+1))-d_S$$
  $$= (l+1)d_S+1.$$
  It follows that,  once $d_S(l+1)\geq 4d_S$, that is,
  \begin{eqnarray}
  \label{eqineqpourl2} l+1\geq 4,
  \end{eqnarray}
  the vector bundle $E$ has a section $s'$   vanishing along $Z'_0\cup Z''_0$, where $Z'_0,\, Z''_0$ are subschemes in general position representing the class $h_S$.  In this construction, the subscheme $Z'=Z\cup Z_0$, or rather the corresponding  element
   $z'\in S_L^{(d+d_S)}$, is not generic in $S_L^{(d+d_S)}$, and similarly the subschemes $Z'''_0=Z'_0\cup Z''_0$, or rather the corresponding elements $z'''_0=z'_0+z''_0\in S_L{(2d_S)}$, are not generic, but we can replace them by the generic points $z'_{\rm gen}\in S_L^{(d+d_S)}$, resp. $Z'''_{0,{\rm gen}}\in  S_L^{(2d_S)}$ which specialize to $z'$, resp. $z'''_0$.
     One checks that, when the zero-dimensional subschemes above are generic, the generic section $s'$ of $E$ vanishing along  $Z'''_{0,{\rm gen}}$ has a zero-dimensional vanishing locus, hence  $z'_{\rm gen}-z'''_{0,{\rm gen}}$ is effective. It follows by specialization, applying again \cite[Lemme 2.10]{colliot},    that
   $$z'-z'_0-z''_0=z+z_0-z'_0-z''_0=z-h_S\in {\rm CH}_0(S_{L})$$ is effective.

  The inequality (\ref{eqineqpourl2}), together with (\ref{eqremainingcase}) shows that in the situation (\ref{eqremainingcase}), the induction step of Proposition \ref{propaprouver} works once $l\geq 3$. When $l\leq 2$ and (\ref{eqremainingcase}) holds, we have
  $$d\leq h^0(S,\mathcal{O}_S(3))\leq h^0(\mathbb{P}^2,\mathcal{O}_{\mathbb{P}^2}(9))=55.$$
  Proposition \ref{propaprouver} is thus proved with $N_1=56$.
\end{proof}
\begin{rema}\label{reamresteaprouver} {\rm  In the sketch of proof above, what needs an extra-checking is the proof that, when the $0$-cycles $z$ (or $z'$) and $z_0$ (or $z'''_0$) are generic,  one can choose the section $s'$ as above to have a zero-dimensional vanishing locus. This proof is done carefully in the case of cubic surfaces in \cite{voisindelpezzo}.}
\end{rema}

\end{document}